\newtheorem{thm}{Theorem}[section]
\newtheorem{cor}[thm]{Corollary}
\newtheorem{lem}{Lemma}[section]
\newtheorem{prop}{Proposition}[section]
\theoremstyle{definition}
\newtheorem{defn}{Definition}[section]
\theoremstyle{remark}
\newtheorem{rem}{Remark}[section]
\numberwithin{equation}{section}
\newcommand{\norm}[1]{\left\Vert#1\right\Vert}
\newcommand{\bx}{\mathbf{x}}
\newcommand{\bnu}{\bm{\nu}}
\newcommand{\ri}{\mathrm{i}}
\newcommand{\abs}[1]{\left\vert#1\right\vert}
\newcommand{\by}{\mathbf{y}}
\newcommand{\bz}{\mathbf{z}}
\newcommand{\Acal}{\mathcal{A}}
\newcommand{\Lcal}{\mathcal{L}}
\newcommand{\Ocal}{\mathcal{O}}
\newcommand{\Tcal}{\mathcal{T}}
\title{The analysis of resonant frequencies and blow-up estimates of close-to-touching subwavelength resonators in the two-dimensional Helmholtz system}
\begin{document}
	
	\author{
		Hongjie Dong\footnote{Division of Applied Mathematics, Brown University, 182 George Street, Providence, RI 02912, United States of America.
			The work of this author was partially supported by the NSF under agreement DMS-2350129. (Hongjie\_Dong@brown.edu).}
		\and
		Hongjie Li\footnote{Yau Mathematical Sciences Center, Tsinghua University, Beijing, China. The work of this author was substantially supported by NSFC grant (12401561) and Research Start Fund (53331004324). (hongjieli@tsinghua.edu.cn; hongjie\_li@yeah.net).}
		\and
		Longjuan Xu\footnote{Academy for Multidisciplinary Studies, Capital Normal University, Beijing 100048, China.
			The work of this author was partially supported by NSF of China (12301141). (longjuanxu@cnu.edu.cn).}
	}
	\date{}
	\maketitle

	\begin{abstract}
		In this paper, we investigate wave scattering by a pair of closely spaced inclusions embedded in a homogeneous medium, characterized by a high contrast physical parameters. The system is modeled by the two-dimensional Helmholtz equation. We show that this configuration exhibits two sub-wavelength resonant modes, whose frequencies display distinct leading-order asymptotic behaviors. These findings differ significantly from those in the three-dimensional Helmholtz setting. Furthermore, we provide a quantitative analysis of the gradient blow-up rates for the wave field localized between the two resonators.
		
		\medskip 
		
		\noindent{\bf Keywords:}~~subwavelength resonances, Helmholtz equation, high contrast materials, gradient estimates
		
		\noindent{\bf AMS Subject Classification:}~~ 35J05, 35C20, 35P20
	\end{abstract}
	
	\section{Introduction}
	
	When inclusions are embedded in a matrix material, where their physical parameters share a high contrast, they exhibit strong oscillations under a certain impinging wave. This phenomenon is known as the subwavelength resonance. The subwavelength denotes that the size of the embedded inclusions is smaller than the wavelength of the incident wave. Subwavelength resonances occur in various systems, including air bubbles in water\cite{AFGL007, DHB6012}, air bubbles in soft elastic materials\cite{CTL1646, LLZ0572}, hard inclusions in soft elastic materials \cite{LX9769, LZ2861, Liu00scie} and silicon particles in the air within electromagnetism \cite{ALL4825, KKL121}. Due to their resonant properties, even a small volume fraction of these particles can significantly alter wave propagation. Consequently, subwavelength resonators enable many important applications such as invisibility cloaking \cite{ACK6055, MN1715, QCJ2662, LLZ2555, L0069}, super-resolution \cite{BLLW9091, AZ0966, DLL1067, HLW1088}, and super-absorption \cite{LSM0301, LL9023}.
	
	When two resonators are brought close together, it is of significant interest to analyze their behavior, particularly the subwavelength resonant frequencies and the blow-up estimates of the wave field between them.
	For related physical studies, we refer the reader to \cite{HE5116, KBS0116, RAB9988} and the references therein. Mathematically, the work \cite{ADY2020, LZ2023, DHT2023} studied two adjacent bubbles in a liquid, modeled by the three-dimensional (3D) Helmholtz system.
	The authors in \cite{LX9769} investigated the case of two hard inclusions embedded in soft elastic materials governed by the 3D elastic system. However, to the best of our knowledge, there have been no studies on two-dimensional (2D) cases involving both the Helmholtz and elastic systems.
	In this paper, we focus on two adjacent resonators in the 2D Helmholtz system. The corresponding analysis in the 2D elasticity system is left for future work.
	Indeed, the 2D Helmholtz equation models acoustic wave propagation scattering from long cylinders or the transverse electromagnetic wave propagation corresponding to long cylindrical structures \cite{CK3518, LL0165}. 
	Related physical studies on pairs of cylinders in electrostatic and optical resonances can be found in \cite{MP7700}.
	Hence, it is both necessary and important to develop rigorous mathematical theories for two adjacent resonators governed by the 2D Helmholtz equation.

	In this paper, we consider the case that the two high-contrast particles are embedded in a matrix. The wave propagation is governed by the 2D Helmholtz equation. 
	As mentioned before, the model we considered here can apply to both acoustic and transverse electromagnetic wave propagation.
	Let $\delta$ denote the high-contrast physical parameter between the embedded inclusions and the surrounding matrix. Let $\varepsilon$ represent the distance between the two inclusions. In the 3D Helmholtz system, the corresponding configuration exhibits two subwavelength resonant frequencies \cite{ADY2020}.
	One resonant frequency is of the order $\Ocal(\sqrt{\delta\ln(1/\varepsilon)})$ and the other is of the order $\Ocal(\sqrt{\delta})$. In the 2D Helmholtz system, there are still two subwavelength resonant frequencies. 
	One resonant frequency is of the order $\Ocal(\sqrt{\delta/\sqrt{\epsilon}})$. 
	However, the other resonant frequency $\omega$ satisfies $\omega^2\ln \omega=\Ocal(\delta)$, which is a nonlinear equation and significantly different from the 3D case. 
	Moreover, the analysis is more intricate due to the greater complexity of the asymptotic expansion of the 2D Helmholtz fundamental solution compared to the 3D case. See Theorem \ref{thm:refre} for the details and Remark \ref{rem:diff3d} for further discussion.
	This analysis of resonant frequencies has important applications for designing metamaterials with multifrequency or broadband functionality in both acoustics and electromagnetics.
	
	Furthermore, we investigate the behavior of the eigenmodes as the resonators are brought closer. The work \cite{ADY2020, LZ2023} studied the the 3D Helmholtz system and showed that the maximum gradients for two wave fields are of the order $\Ocal(1/{(\varepsilon\ln\varepsilon)})$ and $\Ocal(1/\varepsilon)$, respectively. The authors in \cite{LX9769} proved the corresponding gradient estimates for the case of two hard inclusions embedded in soft elastic materials governed by the 3D elastic system. Therefore, it is natural to explore the gradient estimates for the wave fields localized between the two resonators in the 2D Helmholtz system. Prior studies in 2D electrostatics \cite{ABV2015, AKLLL2007, BLY2009, dfl2022, DL2019, G2015, KLY2015, KL2019, LY2009, MPM1988, P1989} and linear elasticity \cite{ACKLY, BLL2015, DLT3014, KY2019, L2021, LX2025,lz2020} have demonstrated that, when the material parameters of the inclusions tend to infinity, the electric field or stress field exhibits a characteristic blow-up rate of $1/\sqrt\varepsilon$. However, the blow-up mechanism in our present study differs fundamentally from these previous cases due to the resonant nature of the interaction.
	Unlike the static or non-resonant scenarios, where blow-up arises from extreme material contrasts, our analysis reveals a distinct singular behavior driven by the excitation of subwavelength resonances.  We derive rigorous gradient estimates in Theorems \ref{thm:blowup1} and \ref{thm:blowup}. It is noted that the gradient of one wave field depends on the values on the boundaries of the two resonators. More importantly, we establish both the upper and lower bounds of the gradient of the other wave field and show that the blow-up rate is of the order of $\Ocal(1/\varepsilon)$ at the narrow region between two resonators. This phenomenon highlights the unique interplay between geometric proximity and resonant coupling, leading to enhanced field concentrations. 
	
	The outline of the paper is as follows. The mathematical set-up of the considered problem is given in Section \ref{sec-setup}. In Section \ref{sec-pre}, we provide some preliminaries for our subsequent use. Section \ref{sec-resonance} is devoted to the resonant analysis of the structure. Finally, in Section \ref{sec:blowup}, we study the blow-up estimates of the gradient of the waves between two adjacent resonators. 
	
	\section{Mathematical set-up}\label{sec-setup}
	
	In this section, we present the mathematical set-up of the considered problem. Then, the definitions of the resonant phenomenon and eigenmodes are provided from a mathematical perspective. 
	
	We consider two $C^{2,\alpha}$ inclusions (resonators) $D_1$ and $D_2$ embedded in a matrix, where $\alpha\in(0,1)$. Denote $D:=D_1 \cup D_2$. Let $\rho_b$ and $\kappa_b$ signify the density and bulk modulus of the resonators in acoustics, respectively. The corresponding parameters in the matrix are $\rho$ and $\kappa$. We then introduce the following auxiliary parameters 
	\begin{equation}\label{eq:aux-param}
		v = \sqrt{\frac{\kappa}{\rho}}, \quad v_b = \sqrt{\frac{\kappa_b}{\rho_b}}, \quad k = \frac{\omega}{v}, \quad k_b = \frac{\omega}{v_b},
	\end{equation}
	which are wave speeds and wave numbers in $\mathbb{R}^2\setminus \overline{D}$ and in $D$, respectively. We also introduce the dimensionless contrast parameters 
	\begin{equation}\label{eq:dim-contrast}
		\delta = \frac{\rho_b}{\rho}, \quad \tau = \frac{k_b}{k} = \frac{v}{v_b} = \sqrt{\frac{\rho_b \kappa}{\rho \kappa_b}}.
	\end{equation}
	We assume that $v$, $v_b$, and $\tau$ are all $\Ocal(1)$, and there is a high contrast between the densities, namely 
	\[
	\delta\ll 1. 
	\]
	Indeed, when bubbles are immersed in liquids, the related parameters satisfy the assumptions here \cite{DHB6012}. For the corresponding physical meaning of the parameters in the transverse electromagnetic, we refer the reader to \cite{CK3518, LL0165}.

	Let $u^{in}(\bx)$ be an incident wave satisfying the equation
	\[
	(\Delta+k^2)u^{in}=0 \quad \text{in} \quad  \mathbb{R}^2.
	\]
	Then the total wave produced by the above system is controlled by the system 
	\begin{align}\label{eq:or1}
		\begin{cases}
			(\Delta+k^2)u=0& \quad \text{in} \quad  \mathbb{R}^2 \setminus\overline{D},\\
			(\Delta +k_b^2) u=0&  \quad \text{in} \quad  D,\\
			u |_+-u|_-=0& \quad \text{on}\quad \partial D, \\
			\delta \frac{\partial u}{\partial \bnu}|_+-\frac{\partial u}{\partial \bnu}|_- =0& \quad \text{on}\quad \partial D,
		\end{cases}
	\end{align}
	where $\bnu$ denotes the exterior unit normal vector to $\partial D$, and the subscripts $\pm$ indicate the limits from outside and inside of $D$, respectively. In the above system, the scattering wave $u^s:=u-u^{in}$ satisfies the Sommerfeld radiation condition, namely
	\[
	\lim_{\abs{\bx}\rightarrow\infty} \sqrt{\abs{\bx}}\left(\frac{\partial u^s}{\partial \abs{\bx}} - \ri k u^s \right) = 0,
	\]
	where $\ri$ denotes the imaginary unit.
	
	We will use the potential theory to analyze the resonant phenomenon of the system \eqref{eq:or1}. Thus, we first present the related results for the subsequent use. 
	The fundamental solution of the operator $\Delta +k^2$ is given by 
	\begin{equation}\label{eq:fuda1}
		G^k(\bx,\by)=-\frac{\ri}{4}H_0^{(1)}(k|\bx-\by|),
	\end{equation}
	where $H_0^{(1))}$ is the Hankel function of the first kind and zero order \cite{CK3518, WGB0731}. Then the single-layer potential operator associated with the domain $D$ is defined by 
	\[
	S^k_D[\varphi](\bx) := \int_{\partial D} G^k(\bx - \by) \varphi(\by)ds(\by), \quad \bx\in \partial D,~\varphi\in L^2(\partial D).
	\]
	On the boundary $\partial D$, the conormal derivative of the single-layer potential satisfies the following jump formula
	\begin{equation}\label{eq:jump}
		\partial_{\bnu}   S_{ D}^{k}[\varphi]|_{\pm}(\bx)=\left( \pm\frac{1}{2} I +  K_{ D}^{k, *} \right)[\varphi](\bx), \quad \bx\in\partial D,
	\end{equation}
	where
	\[
	K_{ D}^{k, *} [\varphi](\bx)=\mbox{p.v.} \int_{\partial D} \partial_{\bnu_{\bx}} G^{k}(\bx-\by)\varphi(\by)ds(\by)
	\]
	with $\mbox{p.v.}$ standing for the Cauchy principal value. It is noted that the operator $ K_{ D}^{k, *}$ below \eqref{eq:jump} is called the Neumann-Poincar\'e (N-P) operator.
	
	With the above preparations, the solution to the system \eqref{eq:or1} can be written as 
	\begin{equation*}
		u(\bx)=\begin{cases}
			u^{in}+S_D^{k}[\psi]  \quad  & \bx\in \mathbb{R}^2\setminus\overline{D},\\
			S_D^{k_b}[\varphi] \quad  & \bx\in D
		\end{cases}
	\end{equation*}
	for some surface potentials $(\varphi, \psi)\in L^2(\partial D) \times L^2(\partial D)$. 
	From the third and fourth conditions in \eqref{eq:or1}, the surface potentials $(\varphi, \psi)$ should satisfy 
	\begin{equation}\label{eq:or2}
		\Acal(\omega ,\delta) \begin{bmatrix}
			\varphi\\
			\psi 
		\end{bmatrix}=\begin{bmatrix}
			u^{in}\\
			\delta \frac{\partial u^{in}}{\partial \bnu}
		\end{bmatrix},
	\end{equation}
	where the operator $A(\omega ,\delta)$ is given by 
	$$
	\Acal(\omega,\delta )=\begin{pmatrix}
		S_D^{k_b} &  -S_D^k \\
		-\frac{1}{2}I+K_D^{k_b,*} & -\delta (\frac{1}{2}I+K_D^{k,*})
	\end{pmatrix}.
	$$
	In view of \eqref{eq:or2}, we can define the resonance of the system \eqref{eq:or1}.
	\begin{defn}
		We say that the resonance occurs for the system \eqref{eq:or1}, if there exists a resonant frequency $\omega\in\mathbb{C}$ with positive real part and negative imaginary part such that the operator $\Acal$ has a nontrivial kernel, where the operator $\Acal$ is given in \eqref{eq:or2}. In other words, there exists a nontrivial solution $(\varphi, \psi)\in L^2(\partial D) \times L^2(\partial D)$ such that 
		\[
		\Acal(\omega ,\delta) \begin{bmatrix}
			\varphi\\
			\psi 
		\end{bmatrix}=0.
		\]
		The functions $(\varphi, \psi)$ are called the resonant eigenfunctions. 
		For such a resonant frequency, we define the corresponding resonant mode (or eigenmode) as 
		\begin{equation}\label{eq:deeimo}
			u(\bx)=\begin{cases}
				S_D^{k_b}[\varphi] \quad  & \bx\in D,\\
				S_D^{k}[\psi]  \quad  & \bx\in \mathbb{R}^2\setminus\overline{D}.
			\end{cases}
		\end{equation}
	\end{defn}
	
	\begin{rem}
		The resonant mode defined in \eqref{eq:deeimo} is normalized in the sense that $$\norm{u}_{L^2(\partial D)} = \Ocal(1).$$ 
	\end{rem}

	\section{Preliminaries}\label{sec-pre}
	In this section, we provide some preliminaries and derive some auxiliary results for our subsequent use. 
	
	The fundamental solution $G^k(\bx,\by)$ defined in \eqref{eq:fuda1} has the asymptotic expansion for $k\ll 1$ \cite{AFGL007},
	\begin{equation}\label{eq:fuaym}
		G^k(\bx,\by)= \frac{1}{2\pi} \ln |\bx-\by| + \eta_k + \sum_{j=1}^{\infty} (b_j \ln k |\bx-\by| + c_j)(k|\bx-\by|)^{2j},
	\end{equation}
	where 
	\begin{equation}\label{def:eta}
		\eta_k = \frac{1}{2\pi} (\ln k + \gamma - \ln 2) - \frac{\ri}{4}, \quad b_j = \frac{(-1)^j}{2\pi} \frac{1}{2^{2j} (j!)^2}, \quad c_j = b_j \left( \gamma - \ln 2 - \frac{\ri\pi}{2} - \sum_{n=1}^{j} \frac{1}{n} \right),
	\end{equation}
	and \(\gamma\) is the Euler constant. In particular,
	\[
	b_1 = -\frac{1}{8\pi}, \quad c_1 = -\frac{1}{8\pi} (\gamma - \ln 2 - 1 - \frac{\ri\pi}{2}).
	\]
	Thus, from \eqref{eq:fuaym} the following asymptotic expansion holds for the single-layer potential
	\begin{equation}\label{eq:assin}
		S_D^k = \hat{S}_D^k + \sum_{j=1}^{\infty} \left(k^{2j} \ln k\right) S_{D,j}^{(1)} + \sum_{j=1}^{\infty} k^{2j} S_{D,j}^{(2)},
	\end{equation}
	where
	\begin{equation}\label{def:SD}
		S_D [\psi](\bx)= \frac{1}{2\pi}\int_{\partial D} \ln |\bx-\by| \psi(\by)ds(\by) , \quad \hat{S}_{D}^k [\psi](\bx) = S_D [\psi](\bx) + \eta_k \int_{\partial D} \psi(\by) \, ds(\by),
	\end{equation}
	and
	\[
	S_{D,j}^{(1)} [\psi](\bx) = \int_{\partial D} b_j |\bx-\by|^{2j} \psi(\by) ds(\by),
	\]
	\[
	S_{D,j}^{(2)}[\psi](\bx) = \int_{\partial D} |\bx - \by|^{2j} (b_j \ln |\bx - \by| + c_j) \psi(\by) \, ds(\by).
	\]
	For the N-P operator \(K_D^{k,*}\) defined below \eqref{eq:jump}, the following asymptotic expansion holds
	\begin{equation}\label{eq:asnp}
		K_D^{k,*} = K_D^* + \sum_{j=1}^\infty \left( k^{2j} \ln k \right) K_{D,j}^{(1)} + \sum_{j=1}^\infty k^{2j} K_{D,j}^{(2)},
	\end{equation}
	where
	\begin{equation}\label{def-KDj}
		\begin{split}
			K_{D,j}^{(1)} [\psi](\bx) & = \int_{\partial D} b_j \frac{\partial |\bx-\by|^{2j}}{\partial \bnu(\bx)} \psi(\by) \, ds(\by), \\
			K_{D,j}^{(2)} [\psi](\bx) & = \int_{\partial D} \frac{\partial (|\bx-\by|^{2j} (b_j \ln |\bx-\by| + c_j))}{\partial \bnu(\bx)} \psi(\by) \, ds(\by).
		\end{split}
	\end{equation}
	\begin{lem}\cite{AFGL007}\label{lem:asmsk}
		The following estimates hold in \(\mathcal{L}(L^2(\partial D), H^1(\partial D))\) and \(\mathcal{L}(L^2(\partial D), L^2(\partial D))\), respectively : 
		\[
		\begin{split}
			S_D^k &= \hat{S}_D^k + k^2 \ln k \, S_{D,1}^{(1)} + k^2 S_{D,1}^{(2)} + O(k^4 \ln k),\\
			K_D^{k,*} &= K_D + k^2 \ln k \, K_{D,1}^{(1)} + k^2 K_{D,1}^{(2)} + O(k^4 \ln k).
		\end{split}
		\]
	\end{lem}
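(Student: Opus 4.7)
The plan is to treat Lemma~\ref{lem:asmsk} as a truncation estimate for the series expansions \eqref{eq:assin} and \eqref{eq:asnp}: I would take those series as given (they follow termwise from \eqref{eq:fuaym}) and show that the tails from $j\ge 2$ are controlled by $k^4\ln k$ in the stated operator norms. Since $\partial D$ is a compact $C^{2,\alpha}$ curve, write $L:=\mathrm{diam}(D)$ so that $|\bx-\by|\le L$ for all $\bx,\by\in\partial D$. For $k\ll 1$ the factor $(k|\bx-\by|)^{2j}$ produces geometric decay in $j$, so the remainder series converges absolutely and is dominated by its $j=2$ term.

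First I would bound
\[
R^k := S_D^k-\hat{S}_D^k - k^2\ln k\, S_{D,1}^{(1)} - k^2 S_{D,1}^{(2)}
\]
in $\Lcal(L^2(\partial D),H^1(\partial D))$. From \eqref{eq:fuaym} its integral kernel equals
\[
\sum_{j\ge 2}\bigl(b_j \ln k+b_j\ln|\bx-\by|+c_j\bigr)(k|\bx-\by|)^{2j},
\]
which for $|\bx-\by|\le L$ is pointwise bounded by $C(|\ln k|+1)(kL)^{4}\sum_{j\ge 0}(kL)^{2j}$. The tangential derivative in $\bx$ of each summand carries a factor $|\bx-\by|^{2j-1}$ (and for the $\ln|\bx-\by|$ piece an additional $|\bx-\by|^{2j-1}$), which remains bounded on $\partial D\times\partial D$ for $j\ge 2$. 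Combining the standard kernel estimate $\|T\|_{L^2\to L^2}\le |\partial D|^{1/2}\|K\|_{L^\infty\cdot L^2}$ with the analogous bound for the tangential derivative of the kernel, I obtain $\|R^k\|_{\Lcal(L^2,H^1)}=O(k^4\ln k)$.

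An identical scheme applies to
\[
\tilde R^k:=K_D^{k,*}-K_D - k^2\ln k\, K_{D,1}^{(1)} - k^2 K_{D,1}^{(2)}.
\]
Using \eqref{def-KDj}, the derivative $\partial_{\bnu(\bx)}|\bx-\by|^{2j}$ introduces a factor $|\bx-\by|^{2j-2}\langle \bx-\by,\bnu(\bx)\rangle$; for $j\ge 2$ this kernel, as well as the piece coming from $\partial_{\bnu(\bx)}(|\bx-\by|^{2j}\ln|\bx-\by|)$, is continuous on $\partial D\times\partial D$. Summing as before yields $\|\tilde R^k\|_{\Lcal(L^2,L^2)}=O(k^4\ln k)$.

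The main technical obstacle is to verify that, after differentiation or in the presence of the $\ln|\bx-\by|$ factor, the $j=2$ kernels are truly continuous (not merely integrable) on $\partial D\times\partial D$, so that the naive $L^\infty$ kernel bound is legitimate. This reduces to the elementary observation that $|\bx-\by|^{2j}\ln|\bx-\by|$ is $C^1$ and vanishes at the diagonal as soon as $2j-1\ge 1$, i.e.\ for $j\ge 2$. Once this is in hand, the geometric tail estimate in $k$ immediately delivers the claimed $O(k^4\ln k)$ remainders in both operator norms.
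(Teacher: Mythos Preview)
The paper does not prove Lemma~\ref{lem:asmsk}; it is quoted from \cite{AFGL007} without argument, so there is no proof in the paper to compare your proposal against. Your direct tail estimate is a valid way to establish the result: for $j\ge 2$ the kernels $|\bx-\by|^{2j}$, $|\bx-\by|^{2j}\ln|\bx-\by|$ and their first derivatives in $\bx$ (respectively their normal derivatives for the N--P operator) are continuous on $\partial D\times\partial D$, the coefficients $b_j,c_j$ decay like $1/(j!)^2$, and the resulting geometric sum in $(kL)^2$ delivers the $O(k^4\ln k)$ remainder in both $\Lcal(L^2,H^1)$ and $\Lcal(L^2,L^2)$.
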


	Based on the asymptotic expansions of the single layer potential operator and the N-P operator $S_D^k$ and $K_D^{k,*}$ in Lemma \ref{lem:asmsk}, we have the asymptotic expansion of the operator $\Acal(\omega,\delta)$.

	\begin{lem}\label{lem:asoa}
		For the operator $\Acal(\omega,\delta)$ from \( \Lcal(L^2(\partial D) \times L^2(\partial D)\) to \( H^1(\partial D) \times L^2(\partial D)) \) defined in \eqref{eq:or2}, we have
		\[
		\Acal(\omega, \delta) = \Acal_0 + \omega^2 \ln \omega \, \Acal_{1,1,0} + \omega^2 \Acal_{1,2,0} + \delta \Acal_{0,1} + \Ocal(\delta \omega^2 \ln \omega) + \Ocal(\omega^4 \ln \omega),
		\]
		where
		\[
		\Acal_0 = \begin{pmatrix}
			\hat{S}_D^{k_b} & -\hat{S}_D^{k} \\
			- \frac{1}{2} I + K_D^* & 0
		\end{pmatrix}, \quad \Acal_{1,1,0} = \begin{pmatrix}
			v_b^{-2} S_{D,1}^{(1)} & -v^{-2} S_{D,1}^{(1)} \\
			v_b^{-2} K_{D,1}^{(1)} & 0
		\end{pmatrix},
		\]
		\[
		\Acal_{1,2,0} = \begin{pmatrix}
			v_b^{-2} (-\ln v_b \, S_{D,1}^{(1)} + S_{D,1}^{(2)}) & -v^{-2} (-\ln v \, S_{D,1}^{(1)} + S_{D,1}^{(2)}) \\
			v_b^{-2} (-\ln v_b \, K_{D,1}^{(1)} + K_{D,1}^{(2)}) & 0
		\end{pmatrix},
		\]
		and
		\[
		\Acal_{0,1} = \begin{pmatrix}
			0 & 0 \\
			0 & - \left( \frac{1}{2} I + K_D^* \right)
		\end{pmatrix}.
		\]
	\end{lem}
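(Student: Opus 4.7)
The plan is to obtain the claimed expansion by substituting the entrywise asymptotic expansions of Lemma \ref{lem:asmsk} into each block of $\Acal(\omega,\delta)$, and then collecting contributions according to their joint scaling in $\omega$ and $\delta$.

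First I would exploit the scalings $k = \omega/v$ and $k_b = \omega/v_b$. For a generic wave speed $v_\ast \in \{v, v_b\}$, the identity
\[
k_\ast^2 \ln k_\ast = v_\ast^{-2}\, \omega^2 \ln \omega - v_\ast^{-2} (\ln v_\ast)\, \omega^2
\]
cleanly splits the $k^2 \ln k$ term produced by Lemma \ref{lem:asmsk} into one piece that depends on $\omega$ only through $\omega^2 \ln \omega$ and a second, purely $\omega^2$, piece carrying the shift $-\ln v_\ast$. Applying this to $S_D^{k_\ast}$ and $K_D^{k_\ast,*}$ immediately yields, for the $(1,1)$, $(1,2)$, and $(2,1)$ blocks, exactly the entries of $\Acal_{1,1,0}$ and $\Acal_{1,2,0}$ with the prefactors $v_b^{-2}$ and $v^{-2}$ and the combinations $-\ln v_\ast\, S_{D,1}^{(1)} + S_{D,1}^{(2)}$ predicted in the statement.

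For the $(2,2)$ block I would factor out the overall $\delta$ and retain only the leading $-\delta(\tfrac{1}{2} I + K_D^*)$, which furnishes $\delta \Acal_{0,1}$; the corrections $-\delta\, v^{-2}\omega^2 \ln \omega\, K_{D,1}^{(1)}$ and $-\delta\, v^{-2}\omega^2 K_{D,1}^{(2)}$, together with the $O(k^4 \ln k)$ tails from Lemma \ref{lem:asmsk}, are absorbed into the error $\Ocal(\delta \omega^2 \ln \omega) + \Ocal(\omega^4 \ln \omega)$. Summing the four blocks and invoking the operator-norm control from Lemma \ref{lem:asmsk} in $\Lcal(L^2(\partial D), H^1(\partial D))$ and $\Lcal(L^2(\partial D), L^2(\partial D))$ yields the identity as stated.

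The one bookkeeping point I would flag, rather than a genuine obstacle, is that $\Acal_0$ is \emph{not} the $\omega = 0$ limit of $\Acal(\omega,\delta)$: the blocks $\hat{S}_D^{k_b}$ and $\hat{S}_D^{k}$ retain the $\ln \omega$ (and constant) dependence carried by $\eta_{k_b}$ and $\eta_{k}$. Leaving these operators \emph{unexpanded} is deliberate, because in two dimensions it is precisely this logarithmic piece that produces the nonlinear resonance relation $\omega^2 \ln \omega = \Ocal(\delta)$ anticipated for Theorem \ref{thm:refre}; pushing the expansion past this point would either destroy uniformity in $\omega$ or introduce infinitely many logarithmic correction levels. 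Apart from tracking this convention consistently, the proof is a direct entrywise substitution.
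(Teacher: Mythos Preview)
Your proposal is correct and follows exactly the approach implicit in the paper, which does not give a separate proof but treats the lemma as an immediate consequence of the entrywise expansions in Lemma~\ref{lem:asmsk} together with the substitutions $k=\omega/v$, $k_b=\omega/v_b$. Your observation that $\Acal_0$ retains the $\ln\omega$ dependence through $\hat S_D^{k_b}$ and $\hat S_D^{k}$ is a helpful clarification of the convention, consistent with how the paper uses $\Acal_0$ in Section~\ref{sec-resonance}.
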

	\begin{lem}\cite{ak162}\label{lem:kse0}
		For some $\varphi\in L^2(\partial D)$ with $\int_{\partial D} \varphi =0$, if it holds that $S_D[\varphi](\bx)=0$ for all $\bx\in\partial D$, then $\varphi=0$.
	\end{lem}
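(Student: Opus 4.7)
The plan is to apply standard potential-theoretic arguments, using the harmonicity of $u := S_D[\varphi]$ and the jump relation for its normal derivative. The condition $\int_{\partial D}\varphi = 0$ is exactly what is needed to compensate for the logarithmic growth of the 2D fundamental solution $\frac{1}{2\pi}\ln|\bx-\by|$, which is the only feature distinguishing this from the 3D analogue.

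First I would observe that $u = S_D[\varphi]$ is harmonic in $D$ and in $\mathbb{R}^2\setminus\overline{D}$, and that $u$ is continuous across $\partial D$ (the classical no-jump property of the single-layer potential). By hypothesis, $u \equiv 0$ on $\partial D$. Applying the uniqueness of the interior Dirichlet problem on the bounded domain $D$ (via the maximum principle on each connected component $D_1, D_2$), one concludes $u \equiv 0$ in $D$.

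Next I would handle the exterior. Here the key point, and the one place where the hypothesis $\int_{\partial D}\varphi = 0$ enters essentially, is the far-field behavior. Expanding
\[
S_D[\varphi](\bx) = \frac{1}{2\pi}\ln|\bx|\int_{\partial D}\varphi(\by)\,ds(\by) + \frac{1}{2\pi}\int_{\partial D}\bigl(\ln|\bx-\by|-\ln|\bx|\bigr)\varphi(\by)\,ds(\by),
\]
the neutrality condition kills the leading logarithmic term, leaving $u(\bx) = O(|\bx|^{-1})$ as $|\bx|\to\infty$. Thus $u$ is a harmonic function in the exterior domain $\mathbb{R}^2\setminus\overline{D}$ that vanishes on $\partial D$ and decays at infinity, so by the uniqueness of the exterior Dirichlet problem in 2D (which requires exactly this decay assumption to succeed), $u \equiv 0$ in $\mathbb{R}^2\setminus\overline{D}$.

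Finally, I would invoke the jump formula \eqref{eq:jump} (specialized to $k=0$): since $u$ vanishes identically on both sides of $\partial D$, both $\partial_{\bnu}u|_+$ and $\partial_{\bnu}u|_-$ are zero, and therefore
\[
\varphi = \partial_{\bnu}S_D[\varphi]\big|_+ - \partial_{\bnu}S_D[\varphi]\big|_- = 0
\]
on $\partial D$. The only genuinely 2D-specific step, and the one worth being careful about, is verifying the $O(|\bx|^{-1})$ decay in the exterior; everything else is standard and uniform in dimension.
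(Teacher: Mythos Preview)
The paper does not actually prove this lemma; it is stated with a citation to \cite{ak162} and no argument is given. Your proof is correct and is precisely the standard potential-theoretic argument one finds in that reference: vanish the interior by the maximum principle, use the mean-zero hypothesis to kill the logarithmic growth and obtain decay at infinity, vanish the exterior by uniqueness for the exterior Dirichlet problem, then recover $\varphi$ from the jump of the normal derivative.
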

	
	If functions $\varphi\in L^2(\partial D)$ do not satisfy $\int_{\partial D} \varphi =0$, i.e., $\int_{\partial D} \varphi \neq 0$, the operator $S_D$ may have a nontrivial kernel \cite{ak162}.  Moreover, we can show that the dimension of the $\ker S_D$ is at most $1$.
	\begin{lem}
		For the single-layer potential operator $S_D$ from $L^2(\partial D)$ to $H^1(\partial D)$, there holds that 
		\[
		\dim \ker S_D \leq 1.
		\]
	\end{lem}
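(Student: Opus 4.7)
First, I will reduce the claim to Lemma \ref{lem:kse0} via an elementary rank argument on the linear functional that records the total mass. Concretely, I will define
\[
T:\ker S_D \to \mathbb{C}, \qquad T(\varphi):=\int_{\partial D}\varphi(\by)\,ds(\by),
\]
which is well defined and linear since $\partial D$ is a compact $C^{2,\alpha}$ curve and $\varphi\in L^2(\partial D)$. The plan is then to show that $T$ is injective on $\ker S_D$, from which $\dim \ker S_D \leq \dim \mathbb{C}=1$ follows immediately.

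The key observation is that Lemma \ref{lem:kse0} is precisely the statement that $T$ is injective on $\ker S_D$: if $\varphi\in L^2(\partial D)$ satisfies $S_D[\varphi]=0$ on $\partial D$ together with $\int_{\partial D}\varphi\,ds=0$, then $\varphi=0$. Hence $\ker T\cap \ker S_D=\{0\}$, and the linear map $T:\ker S_D\to\mathbb{C}$ has trivial kernel, forcing the source to be at most one-dimensional.

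Should one prefer a more concrete presentation, I would phrase the same idea as follows: given $\varphi_1,\varphi_2\in\ker S_D$ with $a_i:=T(\varphi_i)$, form
\[
\varphi_0:=a_2\varphi_1-a_1\varphi_2 \in \ker S_D,
\]
which satisfies $T(\varphi_0)=0$. Lemma \ref{lem:kse0} then gives $\varphi_0=0$, so $\varphi_1,\varphi_2$ are linearly dependent whenever at least one $a_i$ is nonzero; if both $a_i$ vanish, Lemma \ref{lem:kse0} yields $\varphi_1=\varphi_2=0$ directly. In either case the kernel is at most one-dimensional.

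I do not anticipate any serious obstacle: the result is essentially a corollary of Lemma \ref{lem:kse0}, and the only technical point to verify is that $T$ and $S_D$ are simultaneously well defined on $L^2(\partial D)$, which follows from the standard mapping properties of the logarithmic single-layer potential together with the boundary regularity assumed in Section \ref{sec-setup}. The content of the lemma is thus to record that $S_D$ fails to be injective in at most a one-dimensional direction, a fact that will be used in the subsequent resonance analysis where the presence of a nontrivial element in $\ker S_D$ is tied to logarithmic growth at infinity.
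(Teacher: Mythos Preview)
Your proof is correct and follows essentially the same idea as the paper's: both reduce to Lemma~\ref{lem:kse0} by showing that any two elements of $\ker S_D$ become linearly dependent once one cancels their total masses. Your formulation via the injective linear functional $T$ is in fact slightly cleaner than the paper's, which divides by $\int_{\partial D}\varphi$ and $\int_{\partial D}\psi$ without explicitly noting (though it is immediate from Lemma~\ref{lem:kse0}) that these integrals are nonzero for nontrivial kernel elements.
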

	\begin{proof}
		Assume that two functions $\varphi, \psi\in \ker S_D$. Then we define a new function 
		\[
		\phi = \frac{\varphi}{\int_{\partial D} \varphi} -  \frac{\psi}{\int_{\partial D} \psi}.
		\]
		It is obvious that $S_D[\phi]=0$ and $\int_{\partial D} \phi=0$. From Lemma \ref{lem:kse0}
		, we can obtain $\phi=0$. The proof is completed.    
	\end{proof}
	From the discussion above, the operator $S_D$ may not be invertible. Nevertheless, we then show that the operator $\hat{S}_D^k$ is always invertible. 
	\begin{lem}\label{lem:invsk}
		For any $k\in \mathbb{C}$ with $\Im\ln k\neq \pi/2$,  the operator $\hat{S}_D^k$ defined in \eqref{def:SD} is invertible from $L^2(\partial D)$ to $H^1(\partial D)$. 
	\end{lem}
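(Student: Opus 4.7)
The plan is to exploit the fact that $\hat{S}_D^k$ differs from $S_D$ only by a rank-one operator, so it is Fredholm of index zero on the pair $L^2(\partial D)\to H^1(\partial D)$ (inheriting this from the classical Fredholmness of the 2D single-layer potential). Consequently, invertibility reduces to injectivity, and the whole argument will revolve around the single identity $\hat{S}_D^k[\psi]=0$.

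Suppose $\hat{S}_D^k[\psi]=0$ and set $c:=\int_{\partial D}\psi\,ds$. Then $S_D[\psi]\equiv-\eta_k c$ is a constant on $\partial D$. The easy case is $c=0$: then $S_D[\psi]=0$ together with $\int_{\partial D}\psi=0$ yields $\psi=0$ by Lemma \ref{lem:kse0}. So I would concentrate on the case $c\neq 0$, where the goal is to derive a contradiction from the hypothesis $\Im\ln k\neq \pi/2$.

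To handle $c\neq 0$, I would invoke the classical equilibrium density from 2D potential theory: there exists a (unique) $\psi_0\in L^2(\partial D)$ with $\int_{\partial D}\psi_0\,ds=1$ and $S_D[\psi_0]\equiv c_D$ on $\partial D$, where the Robin constant $c_D$ is real. I would then show that any $\psi$ with $S_D[\psi]$ constant on $\partial D$ must be a scalar multiple of $\psi_0$. Indeed, write $\psi=c\psi_0+\psi_1$ with $\int_{\partial D}\psi_1\,ds=0$. Then $S_D[\psi_1]$ is constant on $\partial D$; extending it harmonically to $\mathbb{R}^2\setminus\overline{D}$, it decays at infinity (the mean-zero condition kills the logarithmic term in the far-field expansion of the single-layer potential), so by uniqueness of the exterior Dirichlet problem with decay in 2D it vanishes in the exterior. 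By continuity across $\partial D$ and Lemma \ref{lem:kse0} applied to $\psi_1$, we conclude $\psi_1=0$. Thus $\psi=c\psi_0$, and plugging back gives $c\,c_D=-\eta_k c$, i.e.\ $c(c_D+\eta_k)=0$. Since $c_D\in\mathbb{R}$ while
\[
\Im\eta_k=\frac{1}{2\pi}\Im\ln k-\frac14,
\]
the assumption $\Im\ln k\neq\pi/2$ forces $\Im\eta_k\neq 0$, hence $c_D+\eta_k\neq 0$, contradicting $c\neq 0$. So $\psi=0$ in this case too, and $\hat{S}_D^k$ is injective.

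The main obstacle is the structural lemma that $S_D[\psi]=\text{const}$ on $\partial D$ implies $\psi\in\mathrm{span}\{\psi_0\}$: this is where the 2D-specific subtlety lives, since unlike the 3D case the logarithmic growth of the fundamental solution makes the exterior Dirichlet problem delicate, and one has to exploit the mean-zero normalization to get decay at infinity before applying uniqueness. Everything else (Fredholm index zero of a rank-one perturbation, the real/imaginary-part dichotomy that kills $c_D+\eta_k$) is formal once this lemma is in place.
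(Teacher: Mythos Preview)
Your argument is correct and arrives at the same endpoint as the paper's proof---namely, that injectivity reduces to showing $c_D+\eta_k\neq 0$, which follows from $\Im\eta_k\neq 0$ under the hypothesis $\Im\ln k\neq \pi/2$. One phrasing to sharpen: the step ``by uniqueness of the exterior Dirichlet problem with decay it vanishes in the exterior'' is not literally a uniqueness statement, since the zero function and $S_D[\psi_1]$ have different boundary values unless $a=0$. The clean justification is an energy identity: since $\int_{\partial D}\psi_1=0$, the exterior flux $\int_{\partial D}\partial_\nu S_D[\psi_1]|_+=\int_{\partial D}\psi_1=0$, whence $\int_{\mathbb{R}^2\setminus\overline D}|\nabla S_D[\psi_1]|^2=-a\int_{\partial D}\partial_\nu S_D[\psi_1]|_+=0$, forcing $S_D[\psi_1]\equiv 0$ in the (connected) exterior. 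Equivalently, you can compare with the \emph{bounded} exterior Dirichlet problem, whose unique solution for constant boundary data $a$ is the constant $a$, contradicting decay unless $a=0$.

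The paper takes a different route: instead of invoking the equilibrium density, it splits into the cases $\ker S_D=\{0\}$ and $\ker S_D\neq\{0\}$. In the first case it sets $\varphi_0=S_D^{-1}[1]$ (real-valued) and reads off $\varphi=c_1\varphi_0$ directly; in the second it pairs the equation $\hat S_D^k[\varphi]=0$ against the kernel element $\varphi_0$ (with $\int\varphi_0=1$) using the self-adjointness of $S_D$, which immediately gives $\eta_k\int\varphi=0$ and reduces to Lemma~\ref{lem:kse0}. Your approach has the advantage of treating both cases uniformly through the Robin constant $c_D$ (the paper's two cases are exactly $c_D\neq 0$ versus $c_D=0$), at the cost of importing the existence of the equilibrium density from classical potential theory. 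The paper's approach stays entirely within the operator framework already set up and avoids any exterior PDE argument; yours makes the connection to logarithmic capacity explicit.
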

	\begin{proof}
		Since the operator $\hat{S}_D^k$ is a Fredholm operator \cite{ak162}, we just need to consider the kernel of the operator $\hat{S}_D^k$. Assume that the function $\varphi\in L^2(\partial D)$  satisfies
		\begin{equation}\label{eq:has}
			\hat{S}_D^k[\varphi] = S_D [\varphi](\bx) + \eta_k \int_{\partial D} \varphi(\by) \, ds(\by) =0. 
		\end{equation}
		From the discussion below Lemma \ref{lem:kse0}, the operator $S_D$ may have a nontrivial kernel. Thus we first consider the case that there exists a nontrivial $\varphi_0$ with $\int_{\partial D} \varphi_0=1$ such that $S_D[\varphi_0]=0$. Since $S_D$ is self-adjoint, from \eqref{eq:has} we obtain that 
		\[
		\eta_k \left(\int_{\partial D} \varphi \right) \left(\int_{\partial D} \varphi_0 \right)=0.
		\]
		Thus, we derive $\int_{\partial D} \varphi=0$ and $S_D [\varphi]=0$. Then by Lemma \ref{lem:kse0} we can conclude $\varphi=0$. 
		
		Next, we consider the case that the operator $S_D$ is invertible. Then the equation \eqref{eq:has} gives that $S_D [\varphi](\bx) = -\eta_k \int_{\partial D} \varphi=c_1$ for some constant $c_1$. Define $\varphi_0=S_D^{-1}[1]$ and it is obvious that $\varphi_0$ is a real function from the definition of the operator  $S_D$ in \eqref{def:SD}. Then we have $\varphi=c_1 \varphi_0$. Substituting the expression of $\varphi$ into \eqref{eq:has} yields that 
		\[
		c_1\left( 1 + \eta_k \int_{\partial D} \varphi_0 \right)=0.
		\]
		By the choice of the parameter $k$ , $\eta_k$ has non-zero imaginary part. Thus we can obtain $c_1=0$. Finally, we conclude that the operator $\hat{S}_D^k$ is invertible. The proof is completed.
	\end{proof}
	Moreover, the following lemma holds for the operator $S_D$.
	{
		\begin{lem}\cite{ak162}\label{lem:invscon}
			The operator $\Tcal: L^2(\partial D) \times \mathbb{C} \rightarrow H^1(\partial D) \times \mathbb{C}$ defined by
			\[
			\Tcal[\varphi, c] = \left( S_D[\varphi] + c, \int_{\partial D} \varphi \right)
			\]
			has a bounded inverse.
		\end{lem}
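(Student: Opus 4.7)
My plan is to show that $\Tcal$ is Fredholm of index zero and injective, and then invoke the open mapping theorem to get the bounded inverse.

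First I would check boundedness: $S_D$ is bounded from $L^2(\partial D)$ to $H^1(\partial D)$, the map $c\mapsto c$ (viewed as a constant function in $H^1(\partial D)$) is bounded, and $\varphi\mapsto\int_{\partial D}\varphi$ is a bounded linear functional on $L^2(\partial D)$. Next, for the Fredholm property, I would write $\Tcal$ as a finite-rank (hence compact) perturbation of the block-diagonal map $(\varphi,c)\mapsto(S_D[\varphi],c)$. Since $S_D\colon L^2(\partial D)\to H^1(\partial D)$ is Fredholm of index zero (a standard layer-potential fact) and the identity on $\mathbb{C}$ is trivially index zero, $\Tcal$ is Fredholm of index zero.

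The key step is injectivity. Assume $\Tcal[\varphi,c]=0$, i.e.\ $S_D[\varphi]+c=0$ on $\partial D$ and $\int_{\partial D}\varphi\,ds=0$. Define $u(\bx):=S_D[\varphi](\bx)+c$ for $\bx\in\mathbb{R}^2$. Then $u$ is harmonic on $\mathbb{R}^2\setminus\partial D$, continuous across $\partial D$, and vanishes on $\partial D$. Since $\int_{\partial D}\varphi=0$, the leading logarithmic term in the far-field expansion of $S_D[\varphi]$ drops out, so $u(\bx)\to c$ as $|\bx|\to\infty$. Interior Dirichlet uniqueness yields $u\equiv 0$ in $D$. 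For the exterior, a harmonic function on $\mathbb{R}^2\setminus\overline{D}$ with zero boundary data and finite limit $c$ at infinity must vanish identically; otherwise matching the zero trace and the constant limit at infinity would require a nontrivial $\log|\bx|$ contribution, which is inconsistent with $\int_{\partial D}\varphi=0$. Hence $c=0$ and $u\equiv 0$ in $\mathbb{R}^2$. The jump relation $\partial_{\bnu}S_D[\varphi]|_+-\partial_{\bnu}S_D[\varphi]|_-=\varphi$ then forces $\varphi=0$.

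Injectivity together with Fredholmness of index zero gives bijectivity, and the open mapping theorem produces the bounded inverse. The main obstacle is the injectivity step in the degenerate case where $S_D$ itself fails to be invertible (which can happen for specific logarithmic capacities of $D$, as pointed out after Lemma~\ref{lem:kse0}); the harmonic-extension argument neatly sidesteps this, since it relies only on Dirichlet uniqueness combined with the zero-integral normalization on $\varphi$, and never uses invertibility of $S_D$.
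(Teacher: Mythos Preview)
The paper does not prove this lemma; it is quoted from \cite{ak162} without argument, so there is nothing in the paper to compare against. Your strategy---Fredholm of index zero plus injectivity via harmonic extension and the jump relation---is a standard and correct way to establish the result.

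One point deserves a crisper justification. In the injectivity step you assert that a harmonic function $u$ on $\mathbb{R}^2\setminus\overline{D}$ with $u=0$ on $\partial D$ and $u\to c$ at infinity must vanish, ``otherwise matching the zero trace and the constant limit at infinity would require a nontrivial $\log|\bx|$ contribution.'' That heuristic is correct in spirit but not a proof. A clean way to close the gap: invert at a point of $D$ (the Kelvin transform in two dimensions preserves harmonicity), so the exterior maps to a bounded domain containing the image of infinity as an interior point; boundedness of $u$ makes that point a removable singularity, and the ordinary maximum principle on the resulting bounded domain forces $u\equiv 0$, hence $c=0$. Alternatively, simply cite uniqueness for the bounded exterior Dirichlet problem in $\mathbb{R}^2$. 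Once $c=0$ and $u\equiv 0$ in $D$, the jump relation gives $\varphi=0$ exactly as you say. Note also that with $c=0$ you could instead invoke Lemma~\ref{lem:kse0} directly ($S_D[\varphi]=0$ with $\int_{\partial D}\varphi=0$ forces $\varphi=0$), which avoids re-deriving the jump-relation step.

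Your Fredholm reduction is fine: $\Tcal$ differs from $(\varphi,c)\mapsto(S_D[\varphi],c)$ by a rank-two operator, and $S_D\colon L^2(\partial D)\to H^1(\partial D)$ is an elliptic pseudodifferential operator of order $-1$ on the closed curve $\partial D$, hence Fredholm of index zero.
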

	}

	\begin{lem}\cite{ak162}\label{lem:innp}
		For any $\varphi \in L^2(\partial D)$, it holds that 
		$$ \int_{\partial D}\left(-\frac{1}{2}I+K_D^*\right)[\varphi] \cdot \chi_{\partial D_i}\ \mathrm{d}s =0 \quad\mbox{for} \quad i=1,2,$$
		and 
		$$\int_{\partial D}\left(\frac{1}{2}I+K_D^*\right)[\varphi]\cdot \chi _{\partial D_i}\ \mathrm{d}s=\int_{\partial D_i} \varphi\ \mathrm{d}s \quad\mbox{for} \quad i=1,2,$$
		where $\chi$ is the characteristic function. 
	\end{lem}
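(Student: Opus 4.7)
The plan is to read both identities as statements about the single-layer potential $u := S_D[\varphi]$ and then invoke harmonicity together with the jump formula \eqref{eq:jump}. Recall that $u$ is harmonic in each component $D_i$ (and in $\mathbb{R}^2\setminus\overline{D}$), and that on $\partial D$
\[
\partial_{\bnu} u|_{\pm} = \Bigl(\pm\tfrac{1}{2}I + K_D^*\Bigr)[\varphi].
\]

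For the first identity, I would rewrite $(-\tfrac{1}{2}I+K_D^*)[\varphi]$ as the interior conormal trace $\partial_{\bnu} u|_{-}$ on $\partial D$. Since $\chi_{\partial D_i}$ restricts the integration to a single component, the integral becomes
\[
\int_{\partial D_i} \partial_{\bnu} u|_{-}\, ds.
\]
Applying the divergence theorem in the bounded domain $D_i$ and using $\Delta u = 0$ in $D_i$ (because $u = S_D[\varphi]$ is harmonic away from $\partial D$) gives $0$, which is precisely the first claim. One subtlety to check is that $u \in H^1(D_i)$ with a well-defined conormal trace, which follows from the mapping properties of $S_D$ on $C^{2,\alpha}$ boundaries used throughout the paper.

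For the second identity, the same idea applies but on the exterior side: $(\tfrac{1}{2}I + K_D^*)[\varphi] = \partial_{\bnu} u|_{+}$. The cleanest route is not to integrate by parts in the exterior (which would require handling decay at infinity) but to subtract the two jump formulas. Since
\[
\partial_{\bnu} u|_{+} - \partial_{\bnu} u|_{-} = \varphi \quad \text{on } \partial D,
\]
integrating this identity against $\chi_{\partial D_i}$ yields
\[
\int_{\partial D_i} \partial_{\bnu} u|_{+}\, ds = \int_{\partial D_i} \partial_{\bnu} u|_{-}\, ds + \int_{\partial D_i} \varphi\, ds,
\]
and the first integral on the right vanishes by the argument already used for the first identity. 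This delivers the second claim.

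The step that needs the most care, rather than any deep obstacle, is justifying the divergence-theorem step in a non-smooth setting and verifying that the trace identities hold in the $L^2$/$H^1$ sense rather than only pointwise; but this is standard layer-potential machinery for $C^{2,\alpha}$ domains, so no new ingredient beyond Lemma~\ref{lem:asmsk}'s regularity framework is needed. The argument is entirely local to each component $D_i$, so the fact that $D$ has two disjoint pieces plays no role beyond letting $\chi_{\partial D_i}$ isolate them.
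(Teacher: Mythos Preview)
Your proof is correct and is precisely the standard argument: interpret $(-\tfrac{1}{2}I+K_D^*)[\varphi]$ as the interior conormal derivative of $S_D[\varphi]$, apply the divergence theorem in $D_i$, and then obtain the second identity from the jump relation $\partial_\nu u|_+ - \partial_\nu u|_- = \varphi$. The paper does not supply its own proof of this lemma---it simply cites the reference \cite{ak162}---so there is nothing to compare against beyond noting that your argument is the canonical one found there.
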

	
	\begin{lem}\label{lem:ink12}
		For any $\varphi \in L^2(\partial D)$, we have that 
		\[
		\int_{\partial D}K_{D,1}^{(1)}[\varphi]\cdot \chi _{\partial D_i}= 4b_1|D_i|\int_{\partial D}\varphi.
		\]
	\end{lem}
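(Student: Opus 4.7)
The plan is a direct computation using the explicit form of $K_{D,1}^{(1)}$ together with Fubini's theorem and the divergence theorem. Starting from the definition in \eqref{def-KDj} with $j=1$ and interchanging the order of integration (which is justified since $\varphi\in L^2(\partial D)$ and the kernel $\partial|\bx-\by|^2/\partial\bnu(\bx)$ is continuous in both variables), I would write
\[
\int_{\partial D}K_{D,1}^{(1)}[\varphi]\cdot \chi_{\partial D_i}\,ds(\bx) = b_1 \int_{\partial D}\varphi(\by)\left(\int_{\partial D_i} \frac{\partial |\bx-\by|^2}{\partial \bnu(\bx)}\,ds(\bx)\right)ds(\by).
\]

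Next, I would evaluate the inner integral. Since $\nabla_{\bx}|\bx-\by|^2 = 2(\bx-\by)$, the inner integral is $\int_{\partial D_i} 2(\bx-\by)\cdot\bnu(\bx)\,ds(\bx)$. The vector field $\bx\mapsto 2(\bx-\by)$ is smooth on all of $\mathbb{R}^2$ (regardless of whether $\by$ lies on $\partial D$), with constant divergence $\mathrm{div}_{\bx}(2(\bx-\by)) = 4$ in two dimensions. Hence by the divergence theorem on $D_i$,
\[
\int_{\partial D_i} 2(\bx-\by)\cdot\bnu(\bx)\,ds(\bx) = \int_{D_i} 4\,d\bx = 4|D_i|,
\]
independent of $\by$.

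Substituting this back yields
\[
\int_{\partial D}K_{D,1}^{(1)}[\varphi]\cdot \chi_{\partial D_i}\,ds = b_1 \cdot 4|D_i| \int_{\partial D}\varphi(\by)\,ds(\by) = 4b_1 |D_i|\int_{\partial D}\varphi,
\]
which is the claimed identity. There is no genuine obstacle here; the only small point to watch is that even when $\by\in\partial D\cap\partial D_i$ (where $K_{D,1}^{(1)}$ is defined through the singular N-P framework), the relevant kernel $2(\bx-\by)\cdot\bnu(\bx)$ is in fact bounded, so Fubini applies without any principal value subtlety and the divergence theorem can be invoked in the standard way.
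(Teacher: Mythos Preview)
Your proof is correct and follows essentially the same approach as the paper: swap the order of integration, then apply the divergence theorem to the inner integral using $\Delta_{\bx}|\bx-\by|^2 = 4$, yielding $4|D_i|$ independently of $\by$. The only cosmetic difference is that the paper writes the inner step as $\int_{D_i}\Delta_{\bx}|\bx-\by|^2\,d\bx$ directly, whereas you pass through $\mathrm{div}_{\bx}(2(\bx-\by))$; your added remarks about Fubini and the regularity of the kernel are correct and slightly more explicit than the paper's version.
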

	\begin{proof}
		It follows from \eqref{def-KDj} that
		\begin{align*}
			\int_{\partial D}K_{D,1}^{(1)}[\varphi]\cdot \chi _{\partial D_i}&=b_1\int_{\partial D}\int_{\partial D}\partial_{\bnu_\bx}|\bx-\by|^2\cdot \varphi(\by)\ ds(\by) \chi_{\partial D_i}(\bx)\ ds(\bx)\\
			&=b_1\int_{\partial D}\varphi(\by)\int_{\partial D}\partial_{\bnu_\bx}|\bx-\by|^2 \chi_{\partial D_i}(\bx)\ ds(\bx)\ ds(\by)\\
			&=b_1\int_{\partial D}\varphi(\by)\int_{D_i}\Delta_\bx |\bx-\by|^2\ d\bx\ ds(\by)\\
			&=4b_1|D_i|\int_{\partial D}\varphi(\by)\ ds(\by).
		\end{align*}
		The proof is completed.
	\end{proof}
	
	{
		\begin{lem}\label{lem:ink22}
			For any $\varphi \in L^2(\partial D)$, we have that 
			\[
			\int_{\partial D} K_{D,1}^{(2)}[\varphi]\cdot   \chi_{\partial D_i}\  =-4 b_1 \ln k |D_i|\int_{\partial D}\varphi(\by)\ ds(\by)-\int_{D_i}\hat{S}^k_D[\varphi](\bx)\ d\bx.
			\]
		\end{lem}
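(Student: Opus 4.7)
The approach is to mimic the strategy used in Lemma \ref{lem:ink12}: apply Fubini's theorem to exchange the two boundary integrals, then use the divergence theorem to convert the remaining surface integral over $\partial D_i$ (arising from the conormal derivative) into a volume integral of a Laplacian over $D_i$. Concretely, from the definition \eqref{def-KDj},
\[
\int_{\partial D}K_{D,1}^{(2)}[\varphi]\cdot \chi _{\partial D_i}\,ds(\bx)=\int_{\partial D}\varphi(\by)\int_{\partial D_i}\partial_{\bnu_{\bx}}\!\left(|\bx-\by|^{2}(b_1\ln|\bx-\by|+c_1)\right)ds(\bx)\,ds(\by).
\]
The integrand in $\bx$ is $C^1$ and has locally integrable Laplacian (including at $\bx=\by$, where $r^2\ln r\to 0$), so the divergence theorem applies and the inner integral equals $\int_{D_i}\Delta_\bx\!\left(|\bx-\by|^{2}(b_1\ln|\bx-\by|+c_1)\right)d\bx$.

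The core computation is the 2D Laplacian of $f(r):=r^2(b_1\ln r+c_1)$ with $r=|\bx-\by|$. Using $\Delta(r^2)=4$, $\nabla(r^2)\cdot\nabla(\ln r)=2$, and $\Delta(\ln r)=0$ for $r>0$, I obtain
\[
\Delta_\bx\!\left(|\bx-\by|^{2}(b_1\ln|\bx-\by|+c_1)\right)=4b_1\ln|\bx-\by|+4(b_1+c_1).
\]
Substituting back and exchanging the order of integration again, the result splits into a log-kernel piece (which recombines with $S_D$) and a constant piece:
\[
\int_{\partial D}K_{D,1}^{(2)}[\varphi]\cdot \chi _{\partial D_i}=4b_1\int_{\partial D}\varphi(\by)\int_{D_i}\ln|\bx-\by|\,d\bx\,ds(\by)+4(b_1+c_1)|D_i|\int_{\partial D}\varphi.
\]
Since $4b_1=-\tfrac{1}{2\pi}$, the first term equals $-\int_{D_i}S_D[\varphi]\,d\bx$, which matches the $S_D$ part of $\int_{D_i}\hat S_D^k[\varphi]$.

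The main (and only real) obstacle is the bookkeeping of constants: the right-hand side contains $\hat S_D^k=S_D+\eta_k\int_{\partial D}\cdot\,ds$, so I must verify that the constant $4(b_1+c_1)|D_i|\int_{\partial D}\varphi$ produced by the Laplacian computation, together with the $-4b_1\ln k\,|D_i|\int_{\partial D}\varphi$ appearing on the right, reproduces exactly $-\eta_k|D_i|\int_{\partial D}\varphi$. Using the explicit values from \eqref{def:eta}, namely $b_1=-\tfrac{1}{8\pi}$, $c_1=b_1(\gamma-\ln 2-1-\tfrac{\mathrm{i}\pi}{2})$, and $\eta_k=\tfrac{1}{2\pi}(\ln k+\gamma-\ln 2)-\tfrac{\mathrm{i}}{4}$, a direct check shows
\[
4(b_1+c_1)+4b_1\ln k+\eta_k=0,
\]
where the $\ln k$ terms cancel and the remaining Euler/$\ln 2$/imaginary contributions match. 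Assembling these pieces yields the stated identity.
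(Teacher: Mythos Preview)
Your proposal is correct and follows essentially the same route as the paper: Fubini, divergence theorem to replace $\int_{\partial D_i}\partial_{\bnu_\bx}(\cdot)$ by $\int_{D_i}\Delta_\bx(\cdot)$, the Laplacian computation $\Delta_\bx\big(|\bx-\by|^2(b_1\ln|\bx-\by|+c_1)\big)=4b_1\ln|\bx-\by|+4(b_1+c_1)$, and then recognition of $S_D$ and $\eta_k$ inside $\hat S_D^k$. Your explicit verification of the constant identity $4(b_1+c_1)+4b_1\ln k+\eta_k=0$ is in fact more detailed than the paper's own proof, which jumps directly to the final line.
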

		
		\begin{proof}
			From \eqref{def-KDj}, we have that 
			\begin{align*}
				&  \int_{\partial D} K_{D,1}^{(2)}[\varphi](\bx)\cdot   \chi_{\partial D_i}(\bx)\ \mathrm{d}s(\bx)\\
				&=\int_{\partial D}\varphi(\by) \int_D \Delta (b_1|\bx-\by|^2\ln |\bx-\by|+c_1|\bx-\by|^2)\chi_{\partial D_i}(\bx)\  ds(\bx) ds(\by)\\
				&=\int_{\partial D} \int_{D_i}(4(b_1+c_1)+4b_1\ln |\bx-\by|)\ d\bx \ \varphi(\by)\ ds(\by)\\
				&=-4 b_1 \ln k |D_i|\int_{\partial D}\varphi(\by)\ ds(\by)-\int_{D_i}\hat{S}_D^k[\varphi](\bx)\ d\bx.
			\end{align*}
			This completes the proof.
		\end{proof}
		
		
		For further analysis, we introduce the following functions
		\begin{equation}\label{eq:insk}
			\begin{aligned}
				\xi _1&=(\hat{S}_D^{k_b})^{-1}[\chi_{\partial D_1}],\qquad \xi _2=(\hat{S}_D^{k_b})^{-1}[\chi_{\partial D_2}],\\
				\zeta_1&=(\hat{S}_D^{k})^{-1}[\chi_{\partial D_1}],\qquad \zeta_2=(\hat{S}_D^{k})^{-1}[\chi_{\partial D_2}].
			\end{aligned}
		\end{equation}
		They are all located in the the kernel of the operator $-\frac{1}{2}I+K_D^{*}$ (cf.\cite{AD0049}).
	}

	\section{The analysis of the resonances}\label{sec-resonance}
	
	In this section, we consider the resonant phenomena for the system \eqref{eq:or1}. To obtain explicit expressions of resonant frequencies, we assume that the domains $D_1$ and $D_2$ are symmetric with respect to the $\bx_1$ axis in the sense that 
	\begin{equation}\label{eq:smd}
		D_2=\{ (\bx_1, -\bx_2): \bx\in D_1 \}.
	\end{equation}
	Denote by
	\begin{equation}\label{eq:deaij}
		\alpha_{ij} = \int_{\partial D_j}\zeta_i,\quad i,j=1,2.
	\end{equation}
	
	\begin{lem}\label{lem:ralij}
		Let the domain $D$ be symmetric in the sense of \eqref{eq:smd}. For the functions $\zeta_1$ and $\zeta_2$ defined in \eqref{eq:insk}, we have that 
		\[
		\zeta_2(\by_1,\by_2)= \zeta_1(\by_1, -\by_2) \quad\mbox{and}\quad \zeta_1(\by_1,\by_2)= \zeta_2(\by_1, -\by_2). 
		\]
		For the parameters $\alpha_{ij}$ given in \eqref{eq:deaij}, it holds that 
		\[
		\alpha_{21} =\alpha_{12} \quad\mbox{and}\quad \alpha_{11} = \alpha_{22}.
		\]
	\end{lem}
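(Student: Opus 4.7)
The plan is to exploit the reflection symmetry built into the hypothesis \eqref{eq:smd} by promoting it to an operator-level identity for $\hat{S}_D^k$, and then to read off both statements.

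First, I would introduce the reflection $R(\by_1,\by_2)=(\by_1,-\by_2)$ and the pullback operator $\Rcal\colon L^2(\partial D)\to L^2(\partial D)$ given by $(\Rcal f)(\by)=f(R\by)$. Since $R$ is an isometry of $\mathbb{R}^2$ with $R^2=\mathrm{id}$ and, by \eqref{eq:smd}, $R$ swaps $\partial D_1$ and $\partial D_2$ setwise, $\Rcal$ is a bounded involution on $L^2(\partial D)$ and satisfies $\Rcal\chi_{\partial D_1}=\chi_{\partial D_2}$ and $\Rcal\chi_{\partial D_2}=\chi_{\partial D_1}$.

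Next, the key step is to verify that $\Rcal$ commutes with $\hat{S}_D^k$ on $L^2(\partial D)$. Using the definition \eqref{def:SD}, for any $f\in L^2(\partial D)$ I would compute $\hat{S}_D^k[\Rcal f](\bx)$ and perform the change of variables $\by'=R\by$, which preserves the surface measure on $\partial D$; because $|\bx-R\by'|=|R\bx-\by'|$, this rewrites the kernel so that the whole expression equals $\hat{S}_D^k[f](R\bx)=\Rcal(\hat{S}_D^k[f])(\bx)$. Thus $\Rcal\hat{S}_D^k=\hat{S}_D^k\Rcal$, and by Lemma \ref{lem:invsk} we may invert to get $\Rcal(\hat{S}_D^k)^{-1}=(\hat{S}_D^k)^{-1}\Rcal$.

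Applying this intertwining to $\zeta_1=(\hat{S}_D^k)^{-1}[\chi_{\partial D_1}]$ gives
\[
\Rcal\zeta_1=(\hat{S}_D^k)^{-1}[\Rcal\chi_{\partial D_1}]=(\hat{S}_D^k)^{-1}[\chi_{\partial D_2}]=\zeta_2,
\]
which is exactly $\zeta_2(\by_1,\by_2)=\zeta_1(\by_1,-\by_2)$; applying $\Rcal$ once more (or interchanging the roles of $1$ and $2$) yields the companion identity $\zeta_1(\by_1,\by_2)=\zeta_2(\by_1,-\by_2)$.

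Finally, for the $\alpha_{ij}$ relations I would substitute the pointwise symmetry into \eqref{eq:deaij} and change variables via $R$. For instance,
\[
\alpha_{12}=\int_{\partial D_2}\zeta_1(\by)\,ds(\by)=\int_{\partial D_1}\zeta_1(R\by')\,ds(\by')=\int_{\partial D_1}\zeta_2(\by')\,ds(\by')=\alpha_{21},
\]
and the same procedure applied to $\alpha_{22}$ yields $\alpha_{22}=\alpha_{11}$. There is no serious obstacle: the only point that requires a small verification is the commutation $\Rcal\hat{S}_D^k=\hat{S}_D^k\Rcal$, where one must remember to handle the constant term $\eta_k\int_{\partial D}f$ (which is automatically invariant under $\Rcal$) as well as the logarithmic kernel.
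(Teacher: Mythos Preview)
Your proposal is correct and follows essentially the same approach as the paper: both arguments use the reflection $R(\by_1,\by_2)=(\by_1,-\by_2)$ and the invertibility of $\hat S_D^k$ (Lemma~\ref{lem:invsk}) to identify $\zeta_1\circ R$ with $\zeta_2$, and then deduce the $\alpha_{ij}$ relations by the same change of variables. The only difference is packaging---you phrase the key computation as the operator commutation $\Rcal\hat S_D^k=\hat S_D^k\Rcal$, whereas the paper writes out the integrals explicitly to verify $\hat S_D^k[\zeta_1(\,\cdot_1,-\,\cdot_2)]=\chi_{\partial D_2}$ directly.
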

	\begin{proof}
		From the definition of $\zeta_1$ in \eqref{eq:insk} and using \eqref{def:SD}, we have that
		\begin{align*}
			\chi _{\partial D_1}=\hat{S}_D^k[\zeta_1]=&\int_{\partial D}\ln |\bx-\by|\zeta_1(\by)\ \mathrm{d}s(\by)+\eta_k \int_{\partial D}\zeta_1(\by)\ ds(\by) \\
			=&\int_{\partial D_1}\ln |\bx-\by|\zeta_1(\by)\ ds(\by)+\eta_k \int_{\partial D_1}\zeta_1(\by)\ ds(\by) \\
			&+\int_{\partial D_2}\ln |\bx-\by|\zeta_1(\by)\ ds(\by)+\eta_k \int_{\partial D_2}\zeta_1(\by)\ ds(\by).
		\end{align*}
		A direct calculation shows that 
		\begin{align*}
			&\int_{\partial D_1}\ln \sqrt{(\bx_1-\by_1)^2+(\bx_2-\by_2)^2}\zeta_1(\by_1,\by_2)ds(\by)+\eta_k \int_{\partial D_1} \zeta_1(\by_1,\by_2)ds(\by)\\
			&=\int_{\partial D_2}\ln \sqrt{(\bx_1-\by_1)^2+(\bx_2-(-\by_2))^2}\zeta_1(\by_1,-\by_2)ds(\by) +\eta_k \int_{\partial D_2}\zeta_1(\by_1,-\by_2)ds(\by)\\
			&=\int_{\partial D_2}\ln|\widetilde{\bx}-\by|\zeta_1(\by_1,-\by_2)ds(\by)+\eta\int_{\partial D_2}\zeta_1(\by_1,-\by_2)ds(\by),
		\end{align*}
		where $\widetilde{\bx}=(\bx_1, -\bx_2)$. Following the same discussion, we have that 
		\[
		\begin{split}
			&\int_{\partial D_2}\ln |\bx-\by|\zeta_1(\by)\ ds(\by)+\eta_k \int_{\partial D_2}\zeta_1(\by)\ ds(\by) \\
			& =\int_{\partial D_1}\ln |\widetilde{\bx}-\by|\zeta_1(\by_1,-\by_2)\ ds(\by)+\eta_k \int_{\partial D_1}\zeta_1(\by_1,-\by_2)\ ds(\by).
		\end{split}
		\]
		From the last three identities, we obtain that 
		\[
		\hat{S}_D^k[\zeta_1(\by_1, -\by_2)] = \chi_{\partial D_2}.
		\]
		Thus, we conclude that $\zeta_2(\by_1,\by_2)= \zeta_1(\by_1, -\by_2)$ as $\hat{S}_D^k$ is invertible. Similarly, we prove $\zeta_1(\by_1,\by_2)= \zeta_2(\by_1, -\by_2)$.
		
		Moreover, we have 
		\begin{align*}
			\alpha_{21}=\int_{\partial D_1}\zeta _2(\by_1,\by_2)=\int_{\partial D_2}\zeta_2(\by_1,-\by_2)=\int_{\partial D_2}\zeta_1(\by_1,\by_2)=\alpha_{12},\\
			\alpha_{22}=\int_{\partial D_2}\zeta_2(\by_1,\by_2)=\int_{\partial D_1}\zeta_2(\by_1,-\by_2)=\int_{\partial D_1}\zeta_1(\by_1,\by_2)=\alpha_{11}.
		\end{align*}
		The proof is completed. 
	\end{proof}
	
	\begin{rem}\label{rem:a12}
		From Lemma \ref{lem:ralij}, if the domain $D$ is symmetric, we further have that 
		\begin{equation}\label{eq:al12}
			\alpha_1 = \alpha_2,
		\end{equation}
		where 
		\begin{equation*}
			\alpha_1 =\int_{\partial D}\zeta_1=\alpha_{11}+\alpha_{12}, \quad \quad \alpha_2=\int_{\partial D}\zeta_2=\alpha_{21}+\alpha_{22}.
		\end{equation*}
	\end{rem}

	\begin{rem}\label{rem:apo}
		As in Section \ref{sec:blowup} below, we take a large constant $R$ such that $\overline{D_1\cup D_2}\subset B_R$. From \eqref{eq:insk}, we know that $w_i:=\hat{S}_D^{k}[\zeta_i]$ is the weak solution of 
		\begin{align*}
			\begin{cases}
				\Delta w_i=0\quad &\text{in} \quad \mathbb{R}^2\setminus\overline{D},\\
				w_i=\delta_{ij}\quad &\text{on} \quad \partial D_j,~i,j=1,2,\\
				w_i=\hat{S}_D^{k}[\zeta_i] \quad &\text{on} \quad \partial B_R.\\
			\end{cases}
		\end{align*}
		Then we have 
		\begin{align}\label{sol-wi}
			\begin{cases}
				\Delta (w_2-w_1)=0\quad &\text{in} \quad \mathbb{R}^2\setminus\overline{D},\\
				w_2-w_1=-1\quad &\text{on} \quad \partial D_1,\\
				w_2-w_1=1\quad &\text{on} \quad \partial D_2,\\
				w_2-w_1=\Ocal(|\bx|^{-1}) \quad &\text{as} \quad |\bx|\rightarrow\infty.\\
			\end{cases}
		\end{align}
		The last condition in \eqref{sol-wi} follows from the definition of the operator $\hat{S}_D^{k}$ in \eqref{def:SD}, Remark \ref{rem:a12} and the asymptotic expansion of the function $\ln{\abs{\bx-\by}} = \ln{\abs{\bx}} + \Ocal(\abs{\bx}^{-1})$ with $|\bx| \gg 1$ and $\by\in \partial D$. Following the same reason, we have that $\partial_{\bnu}(w_2-w_1)=\Ocal(|\bx|^{-2})$.
		Multiplying the equation in \eqref{sol-wi} by $w_2-w_1$ and integrating by parts in $B_R\setminus\overline{D}$, we obtain 
		\begin{equation*}
			\int_{\partial D_1}\partial_{\bnu}(w_2-w_1)-\int_{\partial D_2}\partial_{\bnu}(w_2-w_1)+\int_{\partial B_R}(w_2-w_1)\partial_{\bnu}(w_2-w_1)=\int_{B_R\setminus\overline{D}}|\nabla(w_2-w_1)|^2.
		\end{equation*}
		Since $w_2-w_1=\Ocal(|\bx|^{-1})$ and $\partial_{\bnu}(w_2-w_1)=\Ocal(|\bx|^{-2})$, as $R\rightarrow+\infty$, we have 
		\begin{equation}\label{alpha12-alpha11}
			\int_{\partial D_1}\partial_{\bnu}(w_2-w_1)-\int_{\partial D_2}\partial_{\bnu}(w_2-w_1)=\int_{\mathbb{R}^2\setminus\overline{D}}|\nabla(w_2-w_1)|^2.
		\end{equation}
		By using the fact that $\{\zeta_1,\zeta_2\}$ belong to the kernel of the operator $\left(-\frac{1}{2}I+K_D^*\right)$ and the jump condition \eqref{eq:jump}, we have 
		\begin{equation}\label{form-alphaij}
			\int_{\partial D_j}\partial_{\bnu}w_i|_{+}\ ds=\int_{\partial D_j}\partial_{\bnu}\hat S_D^{k}[\zeta_i]|_{+}\ ds=\int_{\partial D_j}\zeta_i\ ds=\alpha_{ij},\quad i,j=1,2.
		\end{equation}
		This together with  $\alpha_{21}=\alpha_{12}$ and $\alpha_{11}=\alpha_{22}$ yields
		\begin{equation*}
			\alpha_{12}-\alpha_{11}=\int_{\partial D_1}\partial_{\bnu}(w_2-w_1)=\int_{\partial D_2}\partial_{\bnu}(w_1-w_2).
		\end{equation*}
		Thus, combining with \eqref{alpha12-alpha11}, we derive $\alpha_{12}>\alpha_{11}$.
	\end{rem}
	
	{
		
		The following lemma provides a more detailed estimate of $\alpha_1$.
		\begin{lem}\label{lem:zeta1}
			Let $C$ denote the norm of the operator $\Tcal$ defined in Lemma \ref{lem:invscon}, and 
			\[
			\tau = \frac{1}{2\pi} ( \gamma - \ln 2) - \frac{\ri}{4}.
			\]
			If $k\ll 1$ is chosen such that $\abs{2\pi(C + \abs{\tau})/\ln k} <1$, then 
			the parameter $\alpha_1 =\int_{\partial D}\zeta_1=\alpha_{11}+\alpha_{12}$ has the following estimate:
			\begin{equation}\label{eq:zeta1-est}
				\alpha_1 = \frac{\pi}{\ln k}(1 + o(1)).
			\end{equation}
		\end{lem}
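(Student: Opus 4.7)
The plan is to exploit the symmetry hypothesis \eqref{eq:smd} to reduce the problem to a single scalar equation and then to use the auxiliary operator $\Tcal$ from Lemma \ref{lem:invscon} to derive a closed-form expression for $\alpha_1$ from which the $\ln k$ asymptotic follows transparently.

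First, I would observe that $\chi_{\partial D_1}+\chi_{\partial D_2}\equiv 1$ on $\partial D$, so linearity of $\hat S_D^k$ gives $\hat S_D^k[\psi]=1$ with $\psi:=\zeta_1+\zeta_2$. Setting $m:=\int_{\partial D}\psi$, Remark \ref{rem:a12} (valid under \eqref{eq:smd}) yields $m=\alpha_1+\alpha_2=2\alpha_1$, so it suffices to asymptotically evaluate the single scalar $m$.

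Next, I would rewrite $S_D[\psi]+\eta_k m=1$ as $\Tcal[\psi,\eta_k m]=(1,m)$, apply $\Tcal^{-1}$, and use its linearity to split
$$(\psi,\eta_k m)=\Tcal^{-1}(1,0)+m\,\Tcal^{-1}(0,1).$$
The crux is the identity $\Tcal(0,1)=(S_D[0]+1,\int 0)=(1,0)$, which yields $\Tcal^{-1}(1,0)=(0,1)$. Denoting $(\psi_1,c_1):=\Tcal^{-1}(0,1)$ and equating the scalar components produces $\eta_k m=1+mc_1$, hence
$$m=\frac{1}{\eta_k-c_1}.$$
Substituting $\eta_k=\frac{\ln k}{2\pi}+\tau$ and using $|c_1|\leq C$ (from boundedness of $\Tcal^{-1}$, which is what $C$ effectively controls in the hypothesis), the assumption $|2\pi(C+|\tau|)/\ln k|<1$ licenses a geometric-series expansion of $(\eta_k-c_1)^{-1}$ around $\frac{\ln k}{2\pi}$, giving $m=\frac{2\pi}{\ln k}(1+o(1))$, whence $\alpha_1=m/2=\frac{\pi}{\ln k}(1+o(1))$, which is \eqref{eq:zeta1-est}.

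The main obstacle, and the step that might not be immediately evident, is pinning down the numerical coefficient $\pi$ in front of $1/\ln k$. A direct frontal attack through the exterior Dirichlet problem for $S_D$ is delicate because $S_D$ itself may fail to be invertible (see the discussion following Lemma \ref{lem:kse0}), and one would have to extract the constant from a $D$-dependent boundary computation. The remedy is the passage to $\Tcal$ together with the identity $\Tcal^{-1}(1,0)=(0,1)$, which holds universally in $D$ and absorbs the rank-one nature of the $\eta_k$-correction into the inversion. Spotting this identity converts the lemma into essentially a one-line perturbation argument; without it, the clean coefficient $\pi$ would not be apparent.
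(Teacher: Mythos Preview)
Your argument is correct and considerably cleaner than the paper's. The paper proceeds by postulating a formal series $\zeta_1=\zeta_{1,0}+\sum_{j\geq 1}(\ln k)^{-j}\zeta_{1,j}$, matching orders of $\ln k$ to obtain a recursive hierarchy of equations \eqref{eq:zeta1e1}--\eqref{eq:zeta1e2}, and then invoking Lemma~\ref{lem:invscon} at each level both to solve and to establish geometric decay of $t_j=\int_{\partial D}\zeta_{1,j}$; the coefficient $t_1=\pi$ is extracted by adding the zeroth-order systems for $\zeta_1$ and $\zeta_2$ and observing that the resulting pair $(\zeta_{1,0}+\zeta_{2,0},\,t_1/\pi-1)$ lies in the kernel of the invertible $\Tcal$. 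You instead pass to $\psi=\zeta_1+\zeta_2$ at the outset, recast $\hat S_D^k[\psi]=1$ as $\Tcal[\psi,\eta_k m]=(1,m)$, and exploit the algebraic identity $\Tcal(0,1)=(1,0)$ to obtain the closed formula $m=1/(\eta_k-c_1)$ with $c_1$ the scalar component of $\Tcal^{-1}(0,1)$. This bypasses the series construction and convergence proof entirely and makes the role of the smallness hypothesis $|2\pi(C+|\tau|)/\ln k|<1$ transparent as the condition for the geometric expansion of $(1+2\pi(\tau-c_1)/\ln k)^{-1}$. The paper's route, while longer, does have the side benefit of producing the full expansion of $\zeta_1$ itself (not just $\alpha_1$), which could be useful if one later needed higher-order information about the density; your method sacrifices that for brevity.
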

		\begin{proof}
			From $\zeta_1 = (\hat{S}_D^{k})^{-1}[\chi_{\partial D_1}]$ and the definition of $\hat{S}_D^{k}$ in \eqref{def:SD}, we have that
			\begin{equation}\label{eq:zeta1}
				\hat{S}_D^{k}[\zeta_1] = \frac{\ln k}{2\pi}\int_{\partial D}\zeta_1 + \tau\int_{\partial D}\zeta_1 + S_D[\zeta_1] = \chi_{\partial D_1}.
			\end{equation}
			Next, we prove that the function $\zeta_1$ has the following asymptotic expansion:
			\begin{equation}\label{eq:zeta1-exp}
				\zeta_1(\by) = \zeta_{1,0}(\by) + \sum_{j=1}^{\infty}\frac{1}{(\ln k)^j}\zeta_{1,j}(\by).
			\end{equation}
			Substituting the last equation into \eqref{eq:zeta1} and comparing the order of the parameter $\ln k$, we have that
			\begin{equation}\label{eq:zeta1e1}
				\int_{\partial D}\zeta_{1,0} = 0,  \quad \frac{1}{2\pi}\int_{\partial D}\zeta_{1,1} + S_D[\zeta_{1,0}] = \chi_{\partial D_1},
			\end{equation}
			and for $j\geq 1$,
			\begin{equation}\label{eq:zeta1e2}
				\begin{cases}
					\frac{1}{2\pi}\int_{\partial D}\zeta_{1,j+1} + \tau \int_{\partial D}\zeta_{1,j} + S_D[\zeta_{1,j}] = 0,  \\ 
					\int_{\partial D}\zeta_{1,j} = t_j, 
				\end{cases}
			\end{equation}
			where $t_j$ are constants.
			
			The existence of the solution $\zeta_{1,0}$ in \eqref{eq:zeta1e1} is guaranteed by Lemma \ref{lem:invscon}.
			Next, we provide the detailed estimate of the parameter $t_1=\int_{\partial D}\zeta_{1,1}$. By Lemma \ref{lem:ralij}, we have that similar to \eqref{eq:zeta1e1}
			\begin{equation}\label{eq:zeta1e3}
				\int_{\partial D}\zeta_{2,0} = 0,  \quad \frac{1}{2\pi}t_1 + S_D[\zeta_{2,0}] = \chi_{\partial D_2}.
			\end{equation}
			Adding \eqref{eq:zeta1e3} and \eqref{eq:zeta1e1} gives that
			\begin{equation}\label{eq:zeta1e4}
				\int_{\partial D}\zeta_{1,0} + \zeta_{2,0} = 0,  \quad   S_D[\zeta_{1,0} + \zeta_{2,0}] + \frac{1}{\pi}t_1-1 = 0.
			\end{equation}
			By Lemma \ref{lem:invscon}, we conclude $t_1=\pi$ since the operator $\mathcal{T}$ is invertible.
			
			Next, we consider the system \eqref{eq:zeta1e2}. By Lemma \ref{lem:invscon}, the solution $\zeta_{1,1}$ exists and satisfies the following estimate
			\[
			\|\zeta_{1,1}\|_{L^2(\partial D)} + \abs{ \frac{1}{2\pi}\int_{\partial D}\zeta_{1,2} + \tau t_1 }  \leq C |t_1|,
			\]
			where $C$ is the norm of the operator $\Tcal$ defined in Lemma \ref{lem:invscon}.
			From the last inequality and the triangle inequality, we have that
			\[
			\|\zeta_{1,1}\|_{L^2(\partial D)} \leq C|t_1| \quad \mbox{and} \quad \abs{\int_{\partial D}\zeta_{1,2}} \leq 2\pi(C + \abs{\tau}) |t_1|.
			\]
			By the same argument, we can show that
			\[
			\|\zeta_{1,j}\|_{L^2(\partial D)} \leq C \left( 2\pi(C + \abs{\tau}) \right)^{j-1}|t_1| \quad \mbox{and} \quad \abs{\int_{\partial D}\zeta_{1,j+1}} \leq \left( 2\pi(C + \abs{\tau}) \right)^{j} |t_1|.
			\]
			Thus, if $2\pi(C + \abs{\tau})/\ln k <1$, we conclude that the asymptotic expansion \eqref{eq:zeta1-exp} converges with respect to the norm $\|\cdot\|_{L^2(\partial D)}$.
			
			Finally, from the asymptotic expansion \eqref{eq:zeta1-exp} and $t_1=\pi$, we obtain \eqref{eq:zeta1-est}.
			The proof is completed.
		\end{proof}
	}

	Next, we study the kernel of the operator $\Acal(\omega ,\delta )$. To that end, we first analyze the kernel of the operator $\Acal_0$, which is the leading order of the operator $\Acal(\omega ,\delta )$. 
	\begin{lem}\label{lem:kea0}
		The kernel of the operator $\Acal_0$ defined in Lemma \ref{lem:asoa} is nontrivial.
	\end{lem}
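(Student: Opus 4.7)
The plan is to exhibit explicit nonzero elements of $\ker \Acal_0$ using the auxiliary functions $\xi_i$ and $\zeta_i$ introduced in \eqref{eq:insk}. Unpacking the matrix $\Acal_0$, a pair $(\varphi,\psi)\in L^2(\partial D)\times L^2(\partial D)$ lies in $\ker\Acal_0$ iff
\[
\hat{S}_D^{k_b}[\varphi] - \hat{S}_D^{k}[\psi] = 0 \quad\text{and}\quad \Bigl(-\tfrac{1}{2}I + K_D^{*}\Bigr)[\varphi] = 0.
\]
The second equation forces $\varphi \in \ker(-\tfrac{1}{2}I + K_D^{*})$. Recalling the sentence below \eqref{eq:insk}, which states that $\xi_1,\xi_2,\zeta_1,\zeta_2$ all lie in this kernel, the functions $\xi_1,\xi_2$ are the natural candidates for $\varphi$.

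First I would set $\varphi := \xi_1$. Then $(-\tfrac{1}{2}I + K_D^{*})[\xi_1]=0$ automatically, and by the very definition of $\xi_1$ we have $\hat{S}_D^{k_b}[\xi_1] = \chi_{\partial D_1}$. Hence the first equation becomes $\hat{S}_D^{k}[\psi] = \chi_{\partial D_1}$, which by the invertibility of $\hat{S}_D^{k}\colon L^2(\partial D)\to H^1(\partial D)$ (Lemma \ref{lem:invsk}) is solved uniquely by $\psi := \zeta_1$. Therefore $(\xi_1,\zeta_1)\in \ker\Acal_0$. The pair is nontrivial because $\hat{S}_D^{k_b}[\xi_1]=\chi_{\partial D_1}\not\equiv 0$ forces $\xi_1\not\equiv 0$. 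By the symmetric role of $D_1$ and $D_2$, the same reasoning produces a second element $(\xi_2,\zeta_2)$ of $\ker\Acal_0$, so in fact $\dim\ker\Acal_0\ge 2$.

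There is no serious obstacle here; the argument is bookkeeping on the characterizing identities $\hat{S}_D^{k_b}[\xi_i]=\chi_{\partial D_i}$ and $\hat{S}_D^{k}[\zeta_i]=\chi_{\partial D_i}$ together with $\xi_i\in\ker(-\tfrac{1}{2}I+K_D^{*})$ and Lemma \ref{lem:invsk}. The only point that one must be careful about is to record the construction in a form useful for the perturbation analysis later in Section \ref{sec-resonance}, where the two eigenmodes of $\Acal(\omega,\delta)$ will be expanded around the two-dimensional kernel spanned by $(\xi_1,\zeta_1)$ and $(\xi_2,\zeta_2)$.
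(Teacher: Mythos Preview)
Your proof is correct and follows essentially the same approach as the paper: exhibiting $(\xi_1,\zeta_1)$ and $(\xi_2,\zeta_2)$ as explicit nonzero elements of $\ker\Acal_0$ by invoking the defining identities in \eqref{eq:insk} and the fact that $\xi_1,\xi_2\in\ker(-\tfrac12 I+K_D^*)$. You have simply written out the verification in slightly more detail than the paper does.
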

	\begin{proof}
		From the definitions of the functions $\xi_1, \xi_2$ and $\zeta_1, \zeta_2$ in \eqref{eq:insk} and the fact that $\xi_1$ and $\xi_2$ given in \eqref{eq:insk} are in the kernel of the operator $\left(-\frac{1}{2}I+K_D^*\right)$, we see that
		\[
		\left( \xi_1, \;\zeta_1 \right) \quad \mbox{and} \quad \left( \xi_2, \; \zeta_2 \right) 
		\]
		are in the kernel of the operator $\Acal_0$.
	\end{proof}
	
	Since the kernel of the operator $\Acal_0$ is nontrivial from Lemma \ref{lem:kea0}, by the Gohberg–Sigal theory \cite{LZ2861}, there exist frequencies $\omega\in\mathbb{C}$ such that the operator $\Acal(\omega,\delta)$ in \eqref{eq:or2} has nontrivial kernels. In other words, the system \eqref{eq:or1} has resonant frequencies. The explicit resonant frequencies are concluded in the following theorem.
	
	\begin{thm}\label{thm:refre}
		There are two resonant frequencies for the system \eqref{eq:or1}. One resonant frequency is
		\[
		\omega_1 = \omega_{1,1}(1 +o(1)),
		\]
		where the leading order $\omega_{1,1}$ satisfies
		\[
		\frac{\omega_{1,1}^2}{v_b^2}\ln \left(\frac{\omega_{1,1}}{v}\right) {|D_1|} + \pi \delta =0.
		\]
		 The corresponding resonant eigenfunctions satisfy 
		\begin{equation}\label{eq:reeifu1}
			\varphi=\xi_1+\xi_2+\Ocal(k_b^2\ln k_b), \quad \psi =\zeta_1+\zeta_2+\Ocal(k^2\ln k).
		\end{equation}
		The other resonant frequency is 
		\begin{equation*}
			\omega_2= v_b\sqrt{\frac{\alpha_{12} - \alpha_{11}}{|D_1|}} \sqrt{\delta} (1 + o(1)).
		\end{equation*}
		The corresponding resonant eigenfunctions satisfy 
		\begin{equation}\label{eq:reeifu2}
			\varphi=\xi_1-\xi_2+\Ocal(k_b^2\ln k_b), \quad \psi =\zeta_1-\zeta_2+\Ocal(k^2\ln k).
		\end{equation}
	\end{thm}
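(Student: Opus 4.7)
The plan is to apply the Gohberg--Sigal characterization of the perturbed kernel of $\Acal(\omega,\delta)$, using the existence of $\ker\Acal_0$ from Lemma \ref{lem:kea0} and the asymptotic expansion in Lemma \ref{lem:asoa}. Since $\ker\Acal_0$ is spanned by $(\xi_1,\zeta_1)$ and $(\xi_2,\zeta_2)$, I would make the ansatz
\[
\varphi=a_1\xi_1+a_2\xi_2+\varphi^{(1)},\qquad \psi=a_1\zeta_1+a_2\zeta_2+\psi^{(1)},
\]
with $(\varphi^{(1)},\psi^{(1)})$ a correction of size $\Ocal(\omega^2\ln\omega+\delta)$ taken in a complement of $\ker\Acal_0$, and substitute into $\Acal(\omega,\delta)[\varphi,\psi]=0$. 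By Lemma \ref{lem:innp}, the range of the leading second-row operator $-\frac{1}{2}I+K_D^*$ is annihilated by $\chi_{\partial D_1}$ and $\chi_{\partial D_2}$, so the cokernel of $\Acal_0$ is two-dimensional and solvability reduces to the scalar conditions
\[
\int_{\partial D_i}\bigl(\text{second row of }\Acal(\omega,\delta)[\varphi,\psi]\bigr)\,ds=0,\qquad i=1,2.
\]

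Next, I would evaluate these two integrals. Combining the $\Acal_{1,1,0}$ and $\Acal_{1,2,0}$ contributions expresses the perturbation of the N--P block as $k_b^2\ln k_b\,K_{D,1}^{(1)}[\varphi]+k_b^2K_{D,1}^{(2)}[\varphi]+\Ocal(k_b^4\ln k_b)$ with $k_b=\omega/v_b$. Applying Lemmas \ref{lem:ink12} and \ref{lem:ink22} with the choice $k=k_b$ shows that the $\ln k_b$ pieces cancel, leaving
\[
\int_{\partial D_i}(K_D^{k_b,*}-K_D^*)[\varphi]\,ds=-k_b^2\int_{D_i}\hat S_D^{k_b}[\varphi]\,d\bx+\Ocal(k_b^4\ln k_b).
\]
Because $\hat S_D^{k_b}[\xi_j]$ is harmonic inside $D$ with boundary data $\chi_{\partial D_j}$, it equals $\chi_{D_j}$ in $D$, so $\int_{D_i}\hat S_D^{k_b}[\varphi]=a_i|D_i|+\Ocal(\omega^2\ln\omega)$. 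The $\delta\Acal_{0,1}$ block, via the second identity of Lemma \ref{lem:innp}, contributes $-\delta\int_{\partial D_i}\psi=-\delta(a_1\alpha_{1i}+a_2\alpha_{2i})+\Ocal(\delta\omega^2\ln\omega)$. At leading order this produces the $2\times2$ system
\[
\frac{\omega^2}{v_b^2}a_i|D_i|+\delta\sum_{j=1}^{2}a_j\alpha_{ji}=0,\qquad i=1,2.
\]

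Using $|D_1|=|D_2|$ together with $\alpha_{11}=\alpha_{22}$ and $\alpha_{12}=\alpha_{21}$ from Lemma \ref{lem:ralij}, the coefficient matrix is simultaneously diagonalized by the even/odd eigenvectors $(1,1)$ and $(1,-1)$. The symmetric branch $(a_1=a_2)$ gives
\[
\frac{\omega^2}{v_b^2}|D_1|+\delta\alpha_1=0,\qquad \alpha_1=\alpha_{11}+\alpha_{12},
\]
and substituting the estimate $\alpha_1=\pi(\ln(\omega/v))^{-1}(1+o(1))$ from Lemma \ref{lem:zeta1} rearranges to the implicit equation for $\omega_{1,1}$ with the eigenfunctions \eqref{eq:reeifu1}. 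The antisymmetric branch $(a_1=-a_2)$ gives $\omega^2|D_1|/v_b^2+\delta(\alpha_{11}-\alpha_{12})=0$; Remark \ref{rem:apo} ensures $\alpha_{12}>\alpha_{11}$, which yields the correct sign and the explicit $\omega_2$ with eigenfunctions \eqref{eq:reeifu2}.

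The main obstacle is the rigorous control of the error hierarchy in the second step: verifying that $(\varphi^{(1)},\psi^{(1)})$ can be solved in a complement of $\ker\Acal_0$ with quantitative norm bounds (so that the $\Ocal(\omega^4\ln\omega)$ and $\Ocal(\delta\omega^2\ln\omega)$ remainders in Lemma \ref{lem:asoa} stay lower-order than the balancing terms) and, for the symmetric mode, converting the self-referential relation $\omega^2\ln(\omega/v)|D_1|/v_b^2+\pi\delta=0$ into the stated asymptotic $\omega_1=\omega_{1,1}(1+o(1))$ through an implicit function or fixed-point argument consistent with the parameter regime of Lemma \ref{lem:zeta1}.
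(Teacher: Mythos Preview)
Your proposal is correct and follows essentially the same route as the paper: expand $\Acal(\omega,\delta)$ via Lemma \ref{lem:asoa}, write $(\varphi,\psi)$ as a combination of $(\xi_i,\zeta_i)$ plus a lower-order remainder, project the second row against $\chi_{\partial D_i}$ using Lemmas \ref{lem:innp}--\ref{lem:ink22} to obtain the $2\times 2$ system, diagonalize with the symmetry relations of Lemma \ref{lem:ralij}, and invoke Lemma \ref{lem:zeta1} for the symmetric branch. Your observation that $\hat S_D^{k_b}[\xi_j]$ is harmonic in $D$ with boundary data $\chi_{\partial D_j}$, hence equals $\chi_{D_j}$ there, is exactly the mechanism behind the paper's passage from Lemma \ref{lem:ink22} to the term $-k_b^2|D_i|d_i$; the paper leaves this implicit, and the obstacles you flag (remainder control, implicit-function step for $\omega_1$) are likewise handled only at the level of order-tracking in the paper's argument.
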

	
	\begin{proof}
		From the discussion below Lemma \ref{lem:kea0}, there exists frequencies $\omega\in\mathbb{C}$ such that the operator $\Acal(\omega,\delta)$ in \eqref{eq:or2} has a nontrivial kernel, namely
		$$
		\Acal(\omega,\delta) \begin{bmatrix}
			\varphi\\
			\psi 
		\end{bmatrix}=0.
		$$
		From the expression of the operator $\Acal(\omega,\delta)$, the equation above is equivalent to 
		$$
		\begin{cases}
			S_D^{k_b}[\varphi]-S_D^k[\psi]=0\\
			(-\frac{1}{2}I+K_D^{k_b,*})[\varphi]-\delta (\frac{1}{2}I+K_D^{k,*})[\psi]=0.
		\end{cases}
		$$
		From the asymptotic expansions in \eqref{eq:assin} and \eqref{eq:asnp} for $k\ll 1$, the last equation is equivalent to
		\begin{equation}\label{eq:exp1}
			\hat{S}_D^{k_b}[\varphi] - \hat{S}_D^k[\psi] = \Ocal(k^2\ln k)
		\end{equation}
		and
		\begin{equation}\label{eq:exp2}
			\left(-\frac{1}{2}I + K_D^* + k_b^2 \ln k_b K_{D,1}^{(1)} + k_b^2 K_{D,1}^{(2)} \right)[\varphi] - \delta \left(\frac{1}{2}I + K_D^*\right)[\psi] = \Ocal(k_b^4 \ln k_b + k^2 \delta \ln k).
		\end{equation}
		From \eqref{eq:exp2} and $\delta\ll 1$, we have that 
		\begin{equation*}
			\varphi-\Ocal(k_b^2\ln k_b + \delta) \in \ker \left(-\frac{1}{2}I+K^*_D\right).
		\end{equation*}
		Thus, it follows from \eqref{eq:insk} that 
		\begin{equation}\label{eq3}
			\varphi =d_1\xi_1+d_2\xi_2+\Ocal(k_b^2\ln k_b + \delta),
		\end{equation}
		where $d_1$ and $d_2$ are two constants. Then the  equation \eqref{eq:exp1}  implies that 
		\begin{equation}\label{eq4}
			\psi =d_1\zeta_1+d_2\zeta_2+\Ocal(k^2\ln k + \delta).
		\end{equation}
		{
			Substituting \eqref{eq3} and \eqref{eq4} into the equation \eqref{eq:exp2} gives that
			\[
			\begin{split}
				\left(-\frac{1}{2}I+K^*_D\right)[\varphi]+ & k_b^2\ln k_b K_{D,1}^{(1)}[d_1\xi_1+d_2\xi_2]+k_b^2 K_{D,2}^{(2)}[d_1\xi_1+d_2\xi_2]\\
				& -  \delta \left(\frac{1}{2}I+K_D^*\right)[d_1\zeta_1+d_2\zeta_2]=\Ocal\left(k_b^4 (\ln k_b)^2 +k^2\delta \ln k + \delta^2\right).
			\end{split}
			\]
			Multiplying both sides of the last equation by $\chi_{\partial D_i}$, integrating on $\partial D$ together with the help of Lemmas \ref{lem:innp}--\ref{lem:ink22}, we have that 
			\begin{align*}
				-k_b^2|D_i|(d_1\delta_{1i}+d_2\delta_{2i})  - \delta \left(d_1\int_{\partial D_i}\zeta_1+d_2\int_{\partial D_i}\zeta_2\right) =\Ocal\left(k_b^4 (\ln k_b)^2 +k^2\delta \ln k + \delta^2\right),
			\end{align*}
			where $\delta_{ij}$ is the Kronecker delta symbol.
			Since the domain $D$ is symmetric, from Lemma \ref{lem:ralij} we have that for $i=1,2$,
			\begin{equation}\label{eq:eqk}
				\begin{split}
					\left(  k_b^2|D_1|I  + \delta B_2 \right)\begin{bmatrix}
						d_1\\
						d_2
					\end{bmatrix} 
					=\Ocal\left(k_b^4 (\ln k_b)^2 +k^2\delta \ln k + \delta^2\right),
				\end{split}
			\end{equation}
			where $I$ is the identity matrix and $B_2$ is given by
			\[
			B_2=\begin{pmatrix}
				\alpha_{11} & \alpha_{12}\\
				\alpha_{12} & \alpha_{11}
			\end{pmatrix}.
			\]
			Direct calculation shows that the eigensystem of $B_2$ is given by
			\begin{equation}\label{eq:eib12}
				B_2 T = T \Lambda_2,
			\end{equation}
			where
			\[
			T=\begin{pmatrix}
				\frac{1}{\sqrt{2}} &-\frac{1}{\sqrt{2}}\\
				\frac{1}{\sqrt{2}} & \frac{1}{\sqrt{2}}
			\end{pmatrix}, 
			\quad
			\Lambda_2=\begin{pmatrix}
				\alpha_{11} + \alpha_{12} & 0\\
				0 & \alpha_{11} - \alpha_{12}
			\end{pmatrix}.
			\]
			Finally, from the equations \eqref{eq:eqk} and \eqref{eq:eib12} and Lemma \ref{lem:zeta1}, we obtain that the leading order of one of the resonant frequencies satisfies
			\begin{equation}\label{eq:lnom}
				\frac{\omega_{1,1}^2}{v_b^2}\ln \left(\frac{\omega_{1,1}}{v}\right) {|D_1|} + \pi \delta =0,
			\end{equation}
			due to $k_b=\omega/v_b$ and $k=\omega/v$ from \eqref{eq:aux-param}.   }
		From \eqref{eq3} and \eqref{eq4}, and the eigenvectors of $B_2$ in \eqref{eq:eib12}, we conclude that the corresponding eigenfunctions satisfy
		\[
		\varphi=\xi_1+\xi_2+\Ocal(k_b^2\ln k_b), \quad \psi =\zeta_1+\zeta_2+\Ocal(k^2\ln k).
		\]
		{Here, we have used the fact $\delta=\Ocal(k_b^2\ln k_b)$ from \eqref{eq:lnom}.}
		Further, the second resonant frequency satisfies 
		\begin{equation}\label{eq:sqom}
			\omega= v_b\sqrt{\frac{\alpha_{12} - \alpha_{11}}{|D_1|}} \sqrt{\delta} (1 + o(1)).
		\end{equation}
		The corresponding eigenfunctions satisfy
		\[
		\varphi=\xi_1-\xi_2+\Ocal(k_b^2\ln k_b), \quad \psi =\zeta_1-\zeta_2+\Ocal(k^2\ln k),
		\]
		{where we use the fact $\delta=\Ocal(k_b^2)$ from \eqref{eq:sqom}.}
		The proof is completed.
		\end{proof}

	\begin{rem}\label{rem:diff3d}
		The derivation of the 2D resonant frequencies is more involved than in the 3D case. For the 3D Helmholtz system, both resonant frequencies are obtained by just expanding the N-P operator to the first order. However, for the 2D Helmholtz system, the first resonant frequency is obtained by expanding the N-P operator to the order $\Ocal(k^2 \ln k)$. The other resonant frequency is derived by expanding the N-P operator to the next order $\Ocal(k^2)$. That is why the two resonant frequencies in 2D have different leading orders. 
	\end{rem}
	
	{
		As shown in Theorem \ref{thm:refre}, one resonant frequency satisfies 
		\[
		\omega= v_b\sqrt{\frac{\alpha_{12} - \alpha_{11}}{|D_1|}} \sqrt{\delta} (1 + o(1)).
		\]
		In Remark \ref{rem:apo}, we have shown that $\alpha_{12} - \alpha_{11}$ is positive. Moreover, as shown in Appendix \ref{Appendix}, when the distance $\varepsilon$ between the two resonators tends to zero, we can prove the following result.
		\begin{cor}
			The term $\alpha_{12}-\alpha_{11}$ appeared in the last formula has the following estimate
			\begin{align*}
				\alpha_{12}-\alpha_{11}=\frac{2\pi}{\sqrt{\lambda\varepsilon}}+\varepsilon^{\frac{\alpha-1}{2}}\Ocal(1),
			\end{align*}
			where $\alpha\in(0,1)$ and $\lambda$ is the curvature of $\partial D$ at $(0,\varepsilon/2)$ and $(0,-\varepsilon/2)$. Thus, the corresponding resonant frequency is of the order $\Ocal(\delta^{1/2}/\varepsilon^{1/4})$. If the parameter $\varepsilon$ is chosen $\varepsilon = \Ocal(\delta^{\beta})$ with $0<\beta<2$, the resonant frequency is of the order $\Ocal(\delta^{(1-\beta/2)/2})$.
		\end{cor}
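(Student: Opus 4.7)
The plan is to evaluate the flux formula $\alpha_{12}-\alpha_{11}=\int_{\partial D_1}\partial_{\bnu}(w_2-w_1)|_+\,ds$ from Remark~\ref{rem:apo} by constructing an explicit ansatz for $w:=w_2-w_1$ in the narrow gap between $D_1$ and $D_2$, evaluating the flux of the ansatz in closed form, and showing that the residual contributes only an $\Ocal(\varepsilon^{(\alpha-1)/2})$ error.

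I would translate the closest points of $\partial D_1$ and $\partial D_2$ to $(0,\pm\varepsilon/2)$. Using the $C^{2,\alpha}$ regularity of $\partial D$ and the symmetry $D_2=\{(x_1,-x_2):x\in D_1\}$, I parametrize the two portions of $\partial D$ facing the gap as graphs
$$y_2=\pm\Bigl(\tfrac{\varepsilon}{2}+\tfrac{\lambda y_1^2}{2}+\rho(y_1)\Bigr),\qquad |\rho(y_1)|\le C|y_1|^{2+\alpha},$$
so the gap thickness is $h(y_1)=\varepsilon+\lambda y_1^2+2\rho(y_1)$. An argument analogous to Lemma~\ref{lem:ralij} applied to the single-layer potentials $w_1,w_2$ shows that $w$ is odd in $y_2$, with boundary values $\mp 1$ on $\partial D_{1,2}$. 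This makes the linear-in-$y_2$ interpolant
$$\bar{w}(y_1,y_2)=-\frac{2y_2}{h(y_1)}$$
on the narrow slab $\Ncal_{\delta_0}=\{|y_1|<\delta_0\}\cap(\mathbb{R}^2\setminus\overline{D})$ the natural ansatz; by construction it matches the Dirichlet data of $w$ exactly on $\partial D\cap \Ncal_{\delta_0}$.

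Next I compute the flux of $\bar w$ in closed form. Parametrizing $\partial D_1\cap \Ncal_{\delta_0}$ as $(y_1,h(y_1)/2)$ with outward unit normal $\bnu=(-h'(y_1)/2,-1)/\sqrt{1+(h'(y_1)/2)^2}$, a direct calculation gives $\partial_{\bnu}\bar w=(2/h(y_1))(1+\Ocal(y_1^2))$, so that, after including the arc-length element,
$$\int_{\partial D_1\cap \Ncal_{\delta_0}}\partial_{\bnu}\bar w\,ds=\int_{-\delta_0}^{\delta_0}\frac{2\,dy_1}{h(y_1)}+\text{(lower order)}.$$
Using $\int_{-\infty}^{\infty}\frac{dt}{\varepsilon+\lambda t^2}=\pi/\sqrt{\lambda\varepsilon}$, the principal integral equals $2\pi/\sqrt{\lambda\varepsilon}$. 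Splitting $[-\delta_0,\delta_0]$ into $|y_1|\lesssim\sqrt{\varepsilon}$ and $\sqrt{\varepsilon}\lesssim |y_1|\le\delta_0$ and invoking $|\rho(y_1)|\le C|y_1|^{2+\alpha}$, one shows that the truncation tail, the perturbation of $h$ by $2\rho$, and the $\Ocal(y_1^2)$ prefactor each contribute $\Ocal(\varepsilon^{(\alpha-1)/2})$.

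To pass from the flux of the ansatz to that of $w$ itself, I extend $\bar w$ to $\bar w_{\mathrm{ext}}\in H^1(\mathbb{R}^2\setminus\overline{D})$ satisfying the full Dirichlet data and decaying at infinity, and decompose $w=\bar w_{\mathrm{ext}}+r$. The minimization property of $w$ yields $\int|\nabla w|^2=\int|\nabla\bar w_{\mathrm{ext}}|^2-\int|\nabla r|^2$, which combined with $\alpha_{12}-\alpha_{11}=\tfrac{1}{2}\int|\nabla w|^2$ from \eqref{alpha12-alpha11} reduces the matching upper and lower bounds to proving $\int|\nabla r|^2=\Ocal(1)$, which is negligible against $\varepsilon^{(\alpha-1)/2}$. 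The main obstacle is establishing this residual estimate sharply: although $|\nabla\bar w_{\mathrm{ext}}|\sim 1/\varepsilon$ inside the slab, the identity $\partial_{y_2}^2\bar w\equiv 0$ forces $\Delta\bar w_{\mathrm{ext}}$ to involve only $\partial_{y_1}^2\bar w$ (plus cutoff-supported terms outside $\Ncal_{\delta_0}$), whose $H^{-1}$ norm must be controlled by a weighted energy argument that exploits cancellations in the slab and the boundedness of the extension region.
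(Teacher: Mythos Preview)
Your proposal and the paper's Appendix share the same blueprint: the flux identity $\alpha_{12}-\alpha_{11}=\int_{\partial D_1}\partial_{\bnu}(w_2-w_1)|_+\,ds$ from Remark~\ref{rem:apo}, the same linear-in-$y_2$ ansatz (your $\bar w=-2y_2/h$ is the paper's $-2p+1$ with $p$ from \eqref{def-p}), and the same closed-form evaluation of the leading integral $\int 2/h(y_1)\,dy_1=2\pi/\sqrt{\lambda\varepsilon}+\Ocal(\varepsilon^{(\alpha-1)/2})$.

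The divergence is in how the residual is controlled. The paper does not use your variational identity; it instead invokes the pointwise estimate $|\nabla(w_2-w_1-\bar w)|\le C/\sqrt{\delta(x_1)}$, obtained by rerunning the iterative machinery of Section~\ref{sec:blowup} (Lemmas~\ref{lem-global}--\ref{lem-localbdd} and the argument leading to \eqref{est-gradientw}), and then inserts this pointwise bound directly into the flux integral to get $\partial_{x_2}(w_2-w_1)=-2/\delta(x_1)+\Ocal(\delta(x_1)^{-1/2})$. Your energy route is more self-contained and avoids that machinery, but leaves the key step $\int|\nabla r|^2=\Ocal(1)$ as an acknowledged obstacle. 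That step can in fact be closed by a single integration by parts in $y_1$: since $\Delta\bar w=\partial_{y_1}^2\bar w$ in the slab and $r=0$ on $\partial D$, one obtains $\int|\nabla r|^2\le C\|\nabla r\|_{L^2}\|\partial_{y_1}\bar w\|_{L^2(\Ncal_{\delta_0})}+\Ocal(1)$, and a direct calculation gives $\|\partial_{y_1}\bar w\|_{L^2}^2\le C\int y_1^2/h(y_1)\,dy_1=\Ocal(1)$, which closes the estimate. One minor expositional point: you compute the \emph{flux} of $\bar w$ but then appeal to the \emph{energy} of $\bar w_{\rm ext}$ in the variational step; these coincide to the required order precisely because the tangential energy $\|\partial_{y_1}\bar w\|_{L^2}^2$ is $\Ocal(1)$, but it would be cleaner to compute $\tfrac12\int|\nabla\bar w_{\rm ext}|^2$ directly.
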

	}

	\section{Blow-up analysis}\label{sec:blowup}
	In this section, we analyze the blow-up behavior of the resonant modes as the two resonators get closer to each other. For this, we first take a large constant $R$  such that $\overline{D_1\cup D_2}\subset B_R$. {In what follows, we denote $\Omega := B_R \setminus \overline{D}$ for simplicity and let $\Omega_r$ (defined in \eqref{narrowreg}) represent the narrow region between $D_1$ and $D_2$.} 
	
	From Theorem \ref{thm:refre}, the first resonant mode in the region $\mathbb{R}^2\setminus\overline{D}$ satisfies
	\begin{equation}\label{eq:blowup1}
		\begin{cases}
			\Delta u_1+k^2u_1 =0 \quad &\text{in} \quad \mathbb{R}^2\setminus\overline{D},\\
			u_1=1+g\quad &\text{on} \quad \partial D_1,\\
			u_1=1+h\quad &\text{on} \quad \partial D_2,\\
			u_1=f_1 \quad &\text{on} \quad \partial B_R,\\
		\end{cases}	
	\end{equation}
	where $g(\bx)=\Ocal(k^2\ln k)$, $h(\bx)=\Ocal(k^2\ln k)$, and
	\[
	f_1(\bx) = S_D^{k}[\psi](\bx), \quad \bx\in\partial B_R
	\]
	with $\psi$ defined in \eqref{eq:reeifu1}.
	
	For the second resonant mode, the corresponding resonant mode satisfies 
	\begin{equation}\label{eq:blowup}
		\begin{cases}
			\Delta u_2+k^2u_2=0\quad &\text{in} \quad \mathbb{R}^2\setminus\overline{D},\\
			u_2=1+o(1)\quad &\text{on} \quad \partial D_1,\\
			u_2=-1+o(1)\quad &\text{on} \quad \partial D_2,\\
			u_2=f_2\quad &\text{on} \quad \partial B_R, 
		\end{cases}
	\end{equation}
	where 
	\[
	f_2(\bx) = S_D^{k}[\psi](\bx), \quad \bx\in\partial B_R,
	\]
	with $\psi$ defined in \eqref{eq:reeifu2}.

	In order to prove the gradient estimates of solutions of \eqref{eq:blowup1} and \eqref{eq:blowup}, we first present more characteristics of the two domains $D_1$ and $D_2$. 
		By a translation and rotation of coordinates (if necessary), there exists a constant $R_0$ independent of $\varepsilon$, such that the sections of $\partial D_{1}$ and $\partial D_{2}$ near the origin, respectively, can be represented by
		\begin{align}\label{h1h2}
			x_{2}=\frac{\varepsilon}{2}+\mathcal{H}_{1}(x_1)\quad\mbox{and}\quad x_{2}=-\frac{\varepsilon}{2}+\mathcal{H}_{2}(x_1)\quad\mbox{for}~|x_1|<2R_0,
		\end{align}
		where $\varepsilon:=\mbox{dist}(D_1,D_2)$. In \eqref{h1h2}, the functions $\mathcal{H}_i\in C^{2,\alpha}(-2R_0,2R_0)$, $i=1,2$, have the expressions 
		\begin{align*}
			\mathcal{H}_{i}(x_1)=
			(-1)^{i+1}\frac{\lambda}{2}|x_1|^{2}+O(|x_1|^{2+\alpha}),
		\end{align*}
		and 
		\begin{equation*}
			\|\mathcal{H}_{i}\|_{C^{2,\alpha}(-2R_0,2R_0)}\leq C,
		\end{equation*}
		where $C$ is a positive constant independent of $\varepsilon$ and $\lambda$ is the curvature of $\partial D$ at $(0,\varepsilon/2)$ and $(0,-\varepsilon/2)$. Throughout the paper, the constant $C$ is independent of $\varepsilon$, and may vary from line to line in various inequalities. Here we would like to remark that our method can be applied to deal with the
		more general inclusions case, say, $\mathcal{H}_{i}(x_1)=
		(-1)^{i+1}\lambda_i|x_1|^{2}+O(|x_1|^{2+\alpha})$ with two positive constants $\lambda_1$ and $\lambda_2$.
		For $0<r\leq\,2R_0$, we define the narrow region between $\partial{D}_{1}$ and $\partial{D}_{2}$ as follows:
		\begin{equation}\label{narrowreg}
			\Omega_r:=\left\{(x_1,x_2)\in \mathbb{R}^{2}: -\frac{\varepsilon}{2}+\mathcal{H}_{2}(x_1)<x_2<\frac{\varepsilon}{2}+\mathcal{H}_{1}(x_1),~|x_1|<r\right\},
		\end{equation}
		and the vertical distance between $\partial{D}_{1}$ and $\partial{D}_{2}$ is denoted by
		\begin{equation}\label{delta_x'}
			\delta(x_1):=\varepsilon+\mathcal{H}_{1}(x_1)-\mathcal{H}_{2}(x_1)=\varepsilon+\lambda|x_1|^{2}+\Ocal(|x_1|^{2+\alpha}).
	\end{equation}
	The behavior of the gradients of the resonant modes as the two resonators are moved close together is stated as follows.
	
	\begin{thm}\label{thm:blowup1}
		Let $u_1$ be the solution of \eqref{eq:blowup1}. Then for $\bx\in\mathbb{R}^2\setminus\overline{D}$ and sufficiently small $\varepsilon>0$, we have 
		\begin{align*}
			|\nabla u_1(\bx)|&\leq \frac{C|g(x_1,\varepsilon/2+\mathcal{H}_{1}(x_1))-h(x_1,-\varepsilon/2+\mathcal{H}_{2}(x_1))|}{\varepsilon+\lambda|x_1|^2}\\
			&\quad+C\big(\|g\|_{C^2(\partial D_1)}+\|h\|_{C^2(\partial D_2)}+(k+1)\|u_{1}\|_{L^2(\mathbb{R}^2\setminus\overline{D})}+k+\|f_1\|_{L^{\infty}(\partial B_R)}\big),
		\end{align*}
		where $C>0$ is a constant independently of $\varepsilon$, $\mathcal{H}_{1}$ and $\mathcal{H}_{2}$ are defined in \eqref{h1h2} below, and $\lambda$ is the curvature of $\partial D$ at $(0,\varepsilon/2)$ and $(0,-\varepsilon/2)$.
	\end{thm}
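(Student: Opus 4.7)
The strategy is to decompose $u_1$ in the narrow region $\Omega_{R_0}$ into an explicit linear interpolant $\bar u$ of the boundary data in the $x_2$-direction, which carries the singular gradient, plus a regular remainder $w$ whose gradient is bounded uniformly in $\varepsilon$. Outside $\Omega_{R_0}$ the bound follows from standard elliptic regularity.

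In $\Omega_{R_0}$ I define
\begin{align*}
\bar u(x_1, x_2) := & \bigl(1 + g(x_1, \tfrac{\varepsilon}{2} + \mathcal H_1(x_1))\bigr)\,\frac{x_2 + \tfrac{\varepsilon}{2} - \mathcal H_2(x_1)}{\delta(x_1)} \\
& + \bigl(1 + h(x_1, -\tfrac{\varepsilon}{2} + \mathcal H_2(x_1))\bigr)\,\frac{\tfrac{\varepsilon}{2} + \mathcal H_1(x_1) - x_2}{\delta(x_1)},
\end{align*}
which matches the Dirichlet data of $u_1$ on $\partial D_i \cap \overline{\Omega_{R_0}}$ for $i=1,2$. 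A direct differentiation yields
\[
\partial_{x_2} \bar u = \frac{g(x_1, \tfrac{\varepsilon}{2} + \mathcal H_1(x_1)) - h(x_1, -\tfrac{\varepsilon}{2} + \mathcal H_2(x_1))}{\delta(x_1)},
\]
which, together with $\delta(x_1) = \varepsilon + \lambda |x_1|^2 + \Ocal(|x_1|^{2+\alpha})$ from \eqref{delta_x'}, produces exactly the singular first term of the conclusion. The other partial $\partial_{x_1} \bar u$ involves $g_{x_1}$, $h_{x_1}$ and the geometric factors $\mathcal H_i'(x_1) = \Ocal(|x_1|)$; using $|x_1|^2 \leq C \delta(x_1)$, these contributions are absorbed into $\|g\|_{C^1(\partial D_1)} + \|h\|_{C^1(\partial D_2)}$ plus a harmless fraction of the singular term.

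Setting $w := u_1 - \bar u$ in $\Omega_{R_0}$, the remainder vanishes on the upper and lower portions of $\partial \Omega_{R_0}$ (on $\partial D_1$ and $\partial D_2$) and satisfies
\[
(\Delta + k^2) w = F := -(\Delta + k^2) \bar u \quad \text{in } \Omega_{R_0}.
\]
A direct computation, in which the most singular $\delta^{-2}$ terms cancel because the two linear coefficients of $\bar u$ sum to $1$, yields $\|F\|_{L^\infty(\Omega_{R_0})} \leq C(\|g\|_{C^2(\partial D_1)} + \|h\|_{C^2(\partial D_2)} + k^2)$. The key technical step is then to control $|\nabla w|$ uniformly in $\varepsilon$ on $\Omega_{R_0/2}$. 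I plan a rescaling argument in the spirit of Bao--Li--Yin: for each $\bar x_1 \in (-R_0/2, R_0/2)$, apply the change of variables $y = (x - (\bar x_1, 0))/\delta(\bar x_1)$ to pull the thin sub-cylinder of horizontal extent $\delta(\bar x_1)$ about $\bar x_1$ to a unit-sized domain $\widetilde \Omega$. The rescaled function $\tilde w$ satisfies $(\Delta_y + \delta(\bar x_1)^2 k^2) \tilde w = \delta(\bar x_1)^2 F$ with homogeneous Dirichlet data on the top and bottom of $\widetilde \Omega$; since $\delta(\bar x_1) k$ remains bounded, boundary Schauder or $W^{2,p}$ estimates on $\widetilde \Omega$ yield $\|\nabla_y \tilde w\|_{L^\infty}$ in terms of $\delta(\bar x_1)^2 \|F\|_{L^\infty}$ and $\|\tilde w\|_{L^2}$, and rescaling back gives the $\varepsilon$-uniform gradient bound on $w$.

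The main obstacle will be verifying that no negative power of $\delta(\bar x_1)$ survives in the final bound. This relies on two ingredients: (i) the homogeneous Dirichlet data of $w$ on both nearby boundaries, so that Poincar\'e's inequality on $\widetilde \Omega$ controls $\tilde w$ in $L^2$ (and hence $L^\infty$ via Sobolev) by its gradient on the rescaled unit domain; and (ii) the cancellation in $F$ leaving only non-singular contributions of size $\|g\|_{C^2} + \|h\|_{C^2} + k^2$. Once this is in place, outside $\Omega_{R_0/2}$ the inclusion boundaries stay a distance of order $1$ from the relevant interior points, and standard interior/boundary Schauder estimates applied directly to $(\Delta + k^2) u_1 = 0$, together with the bounds on $u_1$ on $\partial D_i$ and on $\partial B_R$, yield $|\nabla u_1| \leq C((k+1)\|u_1\|_{L^2} + \|g\|_{C^2} + \|h\|_{C^2} + \|f_1\|_{L^\infty} + k)$. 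Combining the narrow-region and far-field estimates gives the stated bound.
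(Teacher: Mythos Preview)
Your overall decomposition and the form of $\bar u$ match the paper's, but the two ingredients you single out as closing the argument both fail.

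First, the claim $\|F\|_{L^\infty(\Omega_{R_0})}\le C(\|g\|_{C^2}+\|h\|_{C^2}+k^2)$ is not correct. Writing $\bar u=1+g\,p+h(1-p)$ with $p$ as in \eqref{def-p} (and $g,h$ now functions of $x_1$ only), one has $\partial_{x_2}^2\bar u=0$ and
\[
\partial_{x_1}^2\bar u=(g-h)\,\partial_{x_1}^2 p+2(g-h)'\,\partial_{x_1}p+\bigl(g''p+h''(1-p)\bigr).
\]
The cancellation you invoke removes only the contribution of the constant $1$'s; the first term above has size $|g-h|/\delta(x_1)$ (since $\partial_{x_1}^2 p=\Ocal(1/\delta)$) and the second has size $\|(g-h)'\|\,|x_1|/\delta=\Ocal(1/\sqrt{\delta})$. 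Neither is uniformly bounded as $\varepsilon\to 0$; compare \eqref{est-D2v11}.

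Second, and more seriously, the rescaling step alone cannot close even with the correct bound on $F$. After the change of variables, the $W^{2,p}$ estimate on $Q_1$ requires $\|\tilde w\|_{L^p(Q_1)}$; Poincar\'e only trades this for $\|\nabla_y\tilde w\|_{L^2(Q_1)}=\|\nabla w\|_{L^2(\Omega_{\delta(\bar x_1)}(\bar x_1))}$, it does not \emph{supply} that quantity. If you feed in only the global energy bound $\|\nabla w\|_{L^2(\Omega)}\le C$, rescaling back yields $\|\nabla w\|_{L^\infty}\le C/\delta(\bar x_1)$, and hence $|\nabla u_1|\le C/\delta$ with no factor $|g-h|$ in front---which is not the theorem. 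What is missing is precisely the dyadic local-energy iteration at the heart of the Bao--Li--Yin method you cite: one proves a Caccioppoli-type inequality
\[
\mathcal F(t)\le\Bigl(\tfrac14+\tfrac{c_0\delta^2}{(s-t)^2}\Bigr)\mathcal F(s)+C(s-t)^2\!\int_{\Omega_s(\bar x_1)}|\Delta\bar u|^2,
\qquad \mathcal F(t):=\int_{\Omega_t(\bar x_1)}|\nabla w|^2,
\]
and iterates from $s\sim 1$ down to $t=\delta(\bar x_1)$ along steps of size $\sim\delta$. This is Lemma~\ref{lem-localbdd} in the paper and gives $\|\nabla w\|_{L^2(\Omega_\delta(\bar x_1))}\le C\sqrt{\delta}\,|g-h|+C\delta(\dots)$; only after that does your rescaling produce $|\nabla w|\le C|g-h|/\sqrt{\delta}+C$, which is dominated by the singular part of $|\nabla\bar u|$ and yields the stated estimate.
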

	
	\begin{rem}
		As shown in Theorem \ref{thm:blowup1}, the blow-up behavior of $\nabla u_1$ depends on the boundary conditions $g$ and $h$, with no blow-up occurring when $g(x_1,\varepsilon/2+\mathcal{H}_{1}(x_1))\equiv h(x_1,-\varepsilon/2+\mathcal{H}_{2}(x_1))$. This is consistent with the result in \cite{JLX2019} concerning elliptic systems in narrow regions.
	\end{rem}
	
	\begin{thm}\label{thm:blowup}
		Let $u_2$ be the solution of \eqref{eq:blowup}. Then for sufficiently small $\varepsilon>0$, we have 
		\begin{equation*}
			|\nabla u_2(\bx)|\leq \frac{C}{\varepsilon+\lambda|x_1|^2}+C\big((k+1)\|u_{2}\|_{L^{2}(\mathbb{R}^2\setminus\overline{D})}+\|f_2\|_{L^{\infty}(\partial B_R)}\big),\quad \bx\in\mathbb{R}^2\setminus\overline{D},
		\end{equation*}
		and 
		\begin{equation*}
			|\nabla u_2(0,x_2)|\geq \frac{1}{C\varepsilon},
		\end{equation*}
		where $C>0$ is a constant independently of $\varepsilon$ and $\lambda$ is the curvature of $\partial D$ at $(0,\varepsilon/2)$ and $(0,-\varepsilon/2)$.
	\end{thm}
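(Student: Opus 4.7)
The plan is to construct an explicit auxiliary function $\bar{u}$ capturing the leading singular behavior of $u_2$ in the neck $\Omega_{R_0}$, then apply elliptic estimates to the remainder $w := u_2 - \bar{u}$ to derive both the upper and lower bounds. This parallels the strategy for Theorem \ref{thm:blowup1}, but here the boundary values of $u_2$ on $\partial D_1$ and $\partial D_2$ differ by the fixed constant $2$ (rather than by an $o(1)$ quantity), which is precisely what produces the genuine $1/\varepsilon$ blow-up on the vertical segment.

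In $\Omega_{R_0}$ I would define
\[
\bar{u}(x_1,x_2) := \frac{2x_2 - \mathcal{H}_1(x_1) - \mathcal{H}_2(x_1)}{\delta(x_1)},
\]
so that $\bar{u} = 1$ on the upper slice $x_2 = \varepsilon/2 + \mathcal{H}_1(x_1)$ and $\bar{u} = -1$ on the lower slice $x_2 = -\varepsilon/2 + \mathcal{H}_2(x_1)$, and extend $\bar{u}$ smoothly outside the neck via a cutoff, matching $f_2$ on $\partial B_R$ and $\pm 1$ on the remainder of $\partial D_1$ and $\partial D_2$. A direct computation using \eqref{delta_x'} gives $\partial_{x_2}\bar{u} = 2/\delta(x_1)$ and $|\partial_{x_1}\bar{u}| \le C|x_1|/\delta(x_1)$ in $\Omega_{R_0}$, so the dominant contribution is the transverse derivative of order $1/(\varepsilon + \lambda x_1^2)$, matching the singular term in the theorem.

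Setting $w := u_2 - \bar{u}$, the remainder satisfies $(\Delta + k^2)w = -(\Delta + k^2)\bar{u}$ in $\mathbb{R}^2\setminus\overline{D}$, with boundary data $o(1)$ on $\partial D$ and $f_2 - \bar{u}$ on $\partial B_R$. Since $\partial_{x_2}^2\bar{u} \equiv 0$ and the $C^{2,\alpha}$ regularity of $\mathcal{H}_i$ makes the pure $x_1$ second derivative of $\bar{u}$ uniformly bounded in $\varepsilon$, the right-hand side is $L^\infty$-bounded uniformly in $\varepsilon$. Combining an energy estimate (controlled by $\|u_2\|_{L^2}$ and $\|f_2\|_{L^\infty(\partial B_R)}$) with local Schauder estimates on rescaled dyadic boxes inside $\Omega_{R_0}$ should yield
\[
\|\nabla w\|_{L^\infty(\mathbb{R}^2\setminus\overline{D})} \le C\bigl((k+1)\|u_2\|_{L^2(\mathbb{R}^2\setminus\overline{D})} + \|f_2\|_{L^\infty(\partial B_R)}\bigr),
\]
from which the upper bound follows by the triangle inequality $|\nabla u_2| \le |\nabla \bar{u}| + |\nabla w|$. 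For the lower bound, the symmetry conditions $\mathcal{H}_i(0) = \mathcal{H}_i'(0) = 0$ and $\delta'(0) = 0$ imply $\partial_{x_1}\bar{u}(0,x_2) = 0$ and $\partial_{x_2}\bar{u}(0,x_2) = 2/\varepsilon$ for $-\varepsilon/2 < x_2 < \varepsilon/2$, so
\[
|\nabla u_2(0,x_2)| \ge |\nabla \bar{u}(0,x_2)| - |\nabla w(0,x_2)| \ge \frac{2}{\varepsilon} - C \ge \frac{1}{C\varepsilon}
\]
for $\varepsilon$ sufficiently small.

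The main obstacle is the $L^\infty$ bound on $\nabla w$ uniform as $\varepsilon\to 0$, a regime in which $\Omega_{R_0}$ degenerates and Schauder constants typically blow up. The remedy is a dyadic rescaling argument: one covers $\Omega_{R_0}$ by rectangles with $|x_1| \sim 2^{-j}$ and vertical size comparable to $\delta(x_1)$, rescales each to a unit-size non-degenerate domain, and applies interior and boundary Schauder estimates with uniform constants, then sums (or takes the supremum) in $j$. The Helmholtz term $k^2 w$ contributes a lower-order perturbation absorbed through the energy estimate for small $k$. The delicate part is verifying that the rescaled right-hand side $(\Delta+k^2)\bar u$ and the rescaled boundary data produce constants independent of $j$ and $\varepsilon$, which uses crucially the linearity of $\bar u$ in $x_2$ and the $C^{2,\alpha}$ smoothness of $\mathcal{H}_i$.
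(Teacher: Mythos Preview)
Your overall strategy matches the paper's, but there is a genuine error in a key step. Your claim that ``the $C^{2,\alpha}$ regularity of $\mathcal{H}_i$ makes the pure $x_1$ second derivative of $\bar{u}$ uniformly bounded in $\varepsilon$'' is false. Writing $N = 2x_2 - \mathcal{H}_1 - \mathcal{H}_2$ so that $\bar{u} = N/\delta$, one has
\[
\partial_{x_1}^2\bar{u} = \frac{N''}{\delta} - \frac{2N'\delta'}{\delta^2} - \frac{N\delta''}{\delta^2} + \frac{2N(\delta')^2}{\delta^3},
\]
and since $\delta''(0) = 2\lambda$ while $|N|$ can be as large as $\delta$, the term $-N\delta''/\delta^2$ contributes a factor of order $1/\delta(x_1)$; at $x_1=0$ this is $\sim 2\lambda/\varepsilon$, not bounded. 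Thus $\Delta\bar{u}$ is only $O(1/\delta(x_1))$, exactly as in \eqref{est-D2v11} with $|g-h|\sim 2$. Consequently your asserted uniform bound $\|\nabla w\|_{L^\infty}\le C$ cannot follow from the argument you sketch.

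What the paper actually obtains is the weaker but sufficient bound $\|\nabla w\|_{L^\infty(\Omega_{\delta(z_1)/2}(z_1))} \le C/\sqrt{\delta(z_1)} + C$, see \eqref{est-gradientw}. This is \emph{lower order} than $|\nabla\bar{u}| \sim 2/\delta$, so both the upper bound and the lower bound $|\nabla u_2(0,x_2)| \ge 2/\varepsilon - C/\sqrt{\varepsilon} \ge 1/(C\varepsilon)$ still go through. The missing ingredient in your sketch is the \emph{iterated local Caccioppoli estimate} (Lemma~\ref{lem-localbdd}): starting from the global energy $\|\nabla w\|_{L^2(\Omega)} \le C$ and iterating a Caccioppoli inequality on nested cylinders of scale $\delta(z_1)$ down to $\Omega_{\delta(z_1)}(z_1)$ produces $\|\nabla w\|_{L^2(\Omega_{\delta(z_1)}(z_1))} \le C\sqrt{\delta(z_1)}$, after which the rescaled $W^{2,p}$ estimate gives the pointwise $1/\sqrt{\delta}$ bound. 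Without this intermediate step, the global energy alone feeds $\|\nabla w\|_{L^2(\Omega_\delta)} \le C$ (no smallness) into the rescaling, yielding only $|\nabla w| \le C/\delta$, the same order as $\nabla\bar{u}$, and the lower bound collapses.
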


	In view of the gradient estimates for elliptic equations (see, for instance, \cite[Theorem 8.32]{GT1998}), for the solutions of \eqref{eq:blowup1} and \eqref{eq:blowup}, we have 
	\begin{equation*}
		\|\nabla u_1\|_{L^\infty(\mathbb R^2\setminus\overline B_R)}+\|\nabla u_2\|_{L^\infty(\mathbb R^2\setminus\overline B_R)}\leq C,
	\end{equation*}
	where $C>0$ is a constant independently of $\varepsilon$.
	As such, we shall consider the problem in $B_R\setminus\overline{D}=:\Omega$. 
	
	\subsection{Proof of Theorem \ref{thm:blowup1}}
	
	{
		\subsubsection{Main ingredients and outline of the proof of Theorem \ref{thm:blowup1}}
	}
	
	We decompose the solution of \eqref{eq:blowup1} as 
	$$u_1=v_{1,1}+v_{1,2},$$
	where $v_{1,1}$ and $v_{1,2}$, respectively, satisfy the following problem
	\begin{equation}\label{eq:blowup11}
		\begin{cases}
			\Delta v_{1,1}+k^2v_{1,1}=0\quad &\text{in} \quad \Omega,\\
			v_{1,1}=1+g(\bx)\quad &\text{on} \quad \partial D_1,\\
			v_{1,1}=1+h(\bx)\quad &\text{on} \quad \partial D_2,\\
			v_{1,1}=0\quad &\text{on} \quad \partial B_R, 
		\end{cases}
	\end{equation}
	and 
	\begin{equation}\label{eq:bounded112}
		\begin{cases}
			\Delta v_{1,2}+k^2v_{1,2}=0\quad &\text{in} \quad \Omega,\\
			v_{1,2}=0\quad &\text{on} \quad \partial D_1,\\
			v_{1,2}=0\quad &\text{on} \quad \partial D_2,\\
			v_{1,2}=f_1\quad &\text{on} \quad \partial B_R.
		\end{cases}
	\end{equation}
	
	\begin{prop}\label{prop-blowup11}
		Let $v_{1,1}$ be the solution of \eqref{eq:blowup11}. Then for sufficiently small $\varepsilon>0$, we have 
		\begin{align*}
			|\nabla v_{1,1}(\bx)|&\leq \frac{C|g(x_1,\varepsilon/2+\mathcal{H}_{1}(x_1))-h(x_1,-\varepsilon/2+\mathcal{H}_{2}(x_1))|}{\varepsilon+\lambda|x_1|^2}\\
			&\quad+C\big(\|g\|_{C^2(\partial D_1)}+\|h\|_{C^2(\partial D_2)}+k\|v_{1.1}\|_{L^2(\Omega)}+1\big),\quad \bx\in\Omega,
		\end{align*}
		where  $C>0$ is a constant independently of $\varepsilon$, and $\lambda$ is the curvature of $\partial D$ at $(0,\varepsilon/2)$ and $(0,-\varepsilon/2)$.
	\end{prop}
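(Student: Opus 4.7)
The approach is to write $v_{1,1} = \bar v + w$, where $\bar v$ is an explicit auxiliary function carrying the singular behavior in the narrow region $\Omega_r$ and $w$ is a remainder with uniformly bounded gradient. This mirrors the Bao--Li--Yin--type framework for gradient blow-up in thin regions \cite{JLX2019}, modified to accommodate the Helmholtz term and the inhomogeneous boundary data.

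For the auxiliary function I would interpolate linearly in $x_2$ between the boundary values: in the strip $\Omega_{2R_0}$, set
\[
\bar v(x_1,x_2) \;:=\; 1 + \tilde h(x_1) + \big(\tilde g(x_1) - \tilde h(x_1)\big)\,\frac{x_2 + \varepsilon/2 - \mathcal H_2(x_1)}{\delta(x_1)},
\]
where $\tilde g(x_1) := g(x_1,\,\varepsilon/2 + \mathcal H_1(x_1))$ and $\tilde h(x_1) := h(x_1,\,-\varepsilon/2 + \mathcal H_2(x_1))$, and then smoothly cut off in $x_1$ to extend $\bar v$ to all of $\Omega$. By construction, $\bar v$ is affine in $x_2$ and matches the boundary data $1+g$ on $\partial D_1 \cap \Omega_r$ and $1+h$ on $\partial D_2 \cap \Omega_r$. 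Differentiating gives $\partial_{x_2}\bar v = (\tilde g - \tilde h)/\delta(x_1)$, precisely the singular piece of the target estimate, and a short calculation using $\delta(x_1) = \varepsilon + \lambda x_1^2 + O(|x_1|^{2+\alpha})$ yields
\[
|\partial_{x_1}\bar v| \;\le\; C\big(\|g\|_{C^1(\partial D_1)} + \|h\|_{C^1(\partial D_2)}\big) + C\,\frac{|\tilde g(x_1) - \tilde h(x_1)|}{\sqrt{\delta(x_1)}},
\]
which is dominated by the right-hand side of the proposition since $1/\sqrt{\delta}\le 1/\delta$ for $\delta<1$.

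The remainder $w := v_{1,1} - \bar v$ solves
\[
(\Delta + k^2)w = -F \ \text{ in } \Omega, \qquad w = 0 \ \text{ on } \partial D_j \cap \Omega_r \ (j=1,2),
\]
where $F := (\Delta + k^2)\bar v$. Since $\bar v$ is affine in $x_2$, only $\partial_{x_1}^2\bar v$ and $k^2\bar v$ contribute to $F$, and a direct computation bounds
\[
|F(x_1,x_2)| \;\le\; \frac{C\big(\|g\|_{C^2(\partial D_1)} + \|h\|_{C^2(\partial D_2)}\big)}{\sqrt{\delta(x_1)}} + \frac{C|\tilde g(x_1)-\tilde h(x_1)|}{\delta(x_1)} + Ck^2.
\]
To extract a uniform estimate $|\nabla w|\le C$ I would carry out an anisotropic scaling at each base point $z_1\in(-R_0,R_0)$: mapping the slab $\{|x_1-z_1| < \sqrt{\delta(z_1)}\} \cap \Omega_r$ to a unit-aspect rectangle via the straightening transformation $(x_1,x_2)\mapsto \big((x_1-z_1)/\sqrt{\delta(z_1)},\, (x_2+\varepsilon/2-\mathcal H_2(x_1))/\delta(x_1)\big)$, the rescaled remainder $\tilde w$ satisfies a uniformly elliptic equation with small rescaled Helmholtz coefficient $(k\sqrt{\delta(z_1)})^2 \ll 1$, vanishing Dirichlet data on two opposite sides of the unit cell, and right-hand side bounded in terms of the quantities on the right of the proposition. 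Standard boundary $W^{1,\infty}$/Schauder estimates on the unit cell, together with a maximum-principle barrier exploiting the homogeneous Dirichlet data, yield $\|\nabla\tilde w\|_{L^\infty}\le C$; scaling back gives $|\nabla w| \le C$ uniformly in $\varepsilon$. The $k\|v_{1,1}\|_{L^2(\Omega)}$ contribution appears when the Helmholtz part of $F$ is controlled by $v_{1,1}$ itself through an energy/duality argument, and also captures the estimate in $\Omega\setminus\Omega_r$ via standard interior Helmholtz estimates.

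The most delicate point is the uniform $\nabla w$ bound: the source $F$ is \emph{not} $L^\infty$-bounded uniformly in $\varepsilon$, so a naive elliptic estimate fails. The anisotropic straightening is chosen precisely so that the $1/\sqrt{\delta(x_1)}$ singularity of $F$ is absorbed by the Jacobian, and the resulting equation on the unit cell has bounded data; the argument crucially uses that $w\equiv 0$ on $\partial D_j\cap\Omega_r$, which is built into our choice of $\bar v$. Assembling the bound on $|\nabla\bar v|$ from the second paragraph with the uniform bound on $|\nabla w|$ from the third, and combining with the standard Helmholtz estimate outside $\Omega_r$, yields the desired inequality.
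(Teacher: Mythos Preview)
Your auxiliary function $\bar v$ coincides with the paper's, and the decomposition $v_{1,1}=\bar v+w$ is the right starting point. The gap is in the treatment of $w$.

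First, the anisotropic straightening $(x_1,x_2)\mapsto\bigl((x_1-z_1)/\sqrt{\delta(z_1)},\,(x_2+\varepsilon/2-\mathcal H_2(x_1))/\delta(x_1)\bigr)$ does \emph{not} produce a uniformly elliptic equation: under this change of variables $\Delta_x$ becomes, up to lower order terms, $\delta^{-1}\partial_{y_1}^2+\delta^{-2}\partial_{y_2}^2$, whose ellipticity ratio is $\delta(z_1)^{-1}$. So ``standard boundary $W^{1,\infty}$/Schauder estimates on the unit cell'' are not available with constants independent of $\varepsilon$, and the argument for $|\nabla w|\le C$ collapses. Second, your pointwise bound on $F=(\Delta+k^2)\bar v$ is too optimistic: the paper's computation of $\nabla^2\bar v$ gives a term of size $|x_1|\,\delta(x_1)^{-2}\,|\tilde g-\tilde h|$, which near $|x_1|\sim\sqrt{\delta}$ is of order $|\tilde g-\tilde h|\,\delta^{-3/2}$, not $|\tilde g-\tilde h|\,\delta^{-1}$. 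In fact the paper does \emph{not} obtain $|\nabla w|\le C$; its sharp bound is $|\nabla w|\lesssim |\tilde g-\tilde h|\,\delta^{-1/2}+C$ (equation labeled \eqref{est-gradientw} in the paper), which is still dominated by $|\tilde g-\tilde h|\,\delta^{-1}$ and hence suffices for the proposition.

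What the paper does instead is: (i) an \emph{isotropic} rescaling by $\delta(z_1)$ in both variables over the cube $\Omega_{\delta(z_1)}(z_1)$, which keeps the Laplacian intact, combined with (ii) an iterated Caccioppoli estimate (their Lemma~\ref{lem-localbdd}) that passes from the global energy $\|\nabla w\|_{L^2(\Omega)}$ down to the local energy $\|\nabla w\|_{L^2(\Omega_{\delta(z_1)}(z_1))}$ with a gain of $\sqrt{\delta(z_1)}$ after roughly $\delta(z_1)^{-1/2}$ iterations. Step (ii) is the substantive missing ingredient in your proposal; without it, even after fixing the scaling to be isotropic, the $W^{2,p}$ estimate on the unit cell would only feed in the unlocalized $L^2$ norm of $w$, which is not small enough. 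The $k\|v_{1,1}\|_{L^2(\Omega)}$ term enters through the global energy estimate (their Lemma~\ref{lem-global}), not through a duality argument.
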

	
	\begin{prop}\label{prop-bounded12}
		Let $v_{1,2}$ be the solution of \eqref{eq:bounded112}. Then for sufficiently small $\varepsilon>0$, we have 
		\begin{equation*}
			|\nabla v_{1,2}(\bx)|\leq C\big((k+1)\|v_{1,2}\|_{L^{2}(\Omega)}+\|f_1\|_{L^{\infty}(\partial B_R)}\big),\quad \bx\in\Omega,
		\end{equation*}
		where $C>0$ is a constant independently of $\varepsilon$.
	\end{prop}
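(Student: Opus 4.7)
The plan is to obtain the uniform gradient bound by combining Moser/Schauder estimates outside the narrow neck with a blow-up (contradiction) argument inside it, crucially exploiting that $v_{1,2}$ vanishes on both $\partial D_1$ and $\partial D_2$.

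First, Moser iteration applied to the Helmholtz equation $\Delta v_{1,2}+k^{2}v_{1,2}=0$ on the bounded Lipschitz domain $\Omega$ yields the $L^{\infty}$ bound $\|v_{1,2}\|_{L^{\infty}(\Omega)}\leq C(\|v_{1,2}\|_{L^{2}(\Omega)}+\|f_{1}\|_{L^{\infty}(\partial B_{R})})$, with $C$ independent of $\varepsilon$ since each boundary component has uniformly bounded $C^{2,\alpha}$ norm. For any $\bx\in\Omega$ at a fixed positive distance from the narrow region $\Omega_{r_{0}}$, standard interior and boundary Schauder estimates on balls of $\varepsilon$-independent radius give the pointwise gradient bound with a uniform constant, since there the opposite inclusion is bounded away and each boundary patch is uniformly regular.

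Inside $\Omega_{r_{0}}$ I would argue by contradiction. Suppose there exist sequences $\varepsilon_{n}\to 0$ with normalized data $\|v_{1,2,n}\|_{L^{2}}+\|f_{1,n}\|_{L^{\infty}}\leq 1$ and points $\bx_{n}\in\Omega_{r_{0},n}$ with $M_{n}:=|\nabla v_{1,2,n}(\bx_{n})|\to\infty$. Set $\delta_{n}:=\delta(x_{n,1})$ and define the blow-up sequence $\tilde v_{n}(\mathbf t):=v_{1,2,n}(\bx_{n}+\delta_{n}\mathbf t)/(\delta_{n}M_{n})$, so that $|\nabla_{\mathbf t}\tilde v_{n}(0)|=1$. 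Since $v_{1,2,n}$ vanishes on both sides of $\Omega_{r_{0},n}$, integrating $\partial_{x_{2}}v_{1,2,n}$ along vertical cross sections gives $|v_{1,2,n}(\by)|\leq M_{n}\,\operatorname{dist}(\by,\partial D)$, so $\|\tilde v_{n}\|_{L^{\infty}}\leq C$. The rescaled equation $\Delta_{\mathbf t}\tilde v_{n}+(\delta_{n}k_{n})^{2}\tilde v_{n}=0$ holds on a domain whose top and bottom boundaries are uniformly $C^{2,\alpha}$ (by the expansion $\mathcal{H}_{i}(x_{1})=(-1)^{i+1}\lambda x_{1}^{2}/2+O(|x_{1}|^{2+\alpha})$ combined with the control $\delta(y_{1})/\delta_{n}\in[C^{-1},C]$ for $|y_{1}-x_{n,1}|\leq \delta_{n}$). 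Combined with the $L^{\infty}$ bound this yields $C^{1,\alpha}$-compactness, and a subsequence converges in $C^{1}_{\mathrm{loc}}$ to a bounded harmonic function $\tilde v_{\infty}$ on a limit domain, which is either an infinite straight slab (when $x_{n,1}/\sqrt{\varepsilon_{n}}\to 0$) or a region bounded by two parabolas (when $x_{n,1}/\sqrt{\varepsilon_{n}}\to\infty$), vanishing on both boundaries.

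A Liouville-type theorem forces $\tilde v_{\infty}\equiv 0$ in every case: for the slab it follows from Fourier separation of variables (all nontrivial modes grow exponentially in the long direction, so boundedness forces all coefficients to vanish); for the parabolic strip one conformally maps to a half-plane and invokes uniqueness for Laplace with zero Dirichlet data. This contradicts $|\nabla_{\mathbf t}\tilde v_{\infty}(0)|=1$, closing the argument. The main technical obstacle I expect is the case analysis of limit geometries for different asymptotic regimes of $\bx_{n}$ in $\Omega_{r_{0},n}$, which requires choosing the local rescaling carefully so that the limit domain is a slab-type region to which a suitable Liouville statement applies.
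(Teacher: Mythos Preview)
Your approach is viable but follows a route quite different from the paper's. The paper avoids compactness and Liouville arguments altogether: after handling $\Omega\setminus\Omega_{2R_0}$ by $W^{2,p}$ estimates, it multiplies the equation by $v_{1,2}$ and integrates by parts to get the global energy bound $\|\nabla v_{1,2}\|_{L^2(\Omega_{2R_0})}\le C\bigl((k+1)\|v_{1,2}\|_{L^2(\Omega)}+\|f_1\|_{L^\infty(\partial B_R)}\bigr)$, then reruns the Caccioppoli-type iteration of Lemma~\ref{lem-localbdd} with $\bar v\equiv 0$ to obtain the localized bound $\|\nabla v_{1,2}\|_{L^2(\Omega_{\delta(z_1)}(z_1))}\le C(\cdot)\,\delta(z_1)$, and finally rescales and applies $W^{2,p}$ estimates exactly as in the proof of Proposition~\ref{prop-blowup11}. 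The payoff of the paper's route is that it is fully quantitative and reuses verbatim the machinery already set up for Proposition~\ref{prop-blowup11}; your blow-up argument is more conceptual but introduces a separate Liouville step and delivers no explicit constants.

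Two points in your sketch need tightening. First, to justify $|v_{1,2,n}(\by)|\le M_n\operatorname{dist}(\by,\partial D)$ by integrating the gradient, and to guarantee $|\nabla_{\mathbf t}\tilde v_n|\le 1$ on the rescaled domain (both are essential for compactness), you must choose $\bx_n$ at, or within a fixed factor of, a point where $|\nabla v_{1,2,n}|$ attains its supremum over the narrow region, not an arbitrary point where the gradient happens to be large; this is standard in blow-up arguments but is missing from your statement. Second, your case analysis of limit domains is incorrect: since $\lambda x_{n,1}^2\le\delta_n$ forces $|x_{n,1}|\le C\sqrt{\delta_n}$, after rescaling by $\delta_n$ the boundary slope is $O(\sqrt{\delta_n})$ and the curvature is $O(\delta_n)$, both tending to zero, so the limit domain is \emph{always} a straight slab regardless of the ratio $x_{n,1}/\sqrt{\varepsilon_n}$. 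No parabolic strip ever arises, and your proposed conformal map of such a strip to a half-plane would not make sense anyway (a two-boundary strip cannot map conformally onto a single-boundary half-plane); odd reflection across the slab walls and the classical Liouville theorem handle the only case that actually occurs.
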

	
	{The proofs of Propositions \ref{prop-blowup11} and \ref{prop-bounded12} will be presented in Subsection \ref{subsec-prop}. With them in hand, we are ready to prove Theorem \ref{thm:blowup1}.
	}
	
	\begin{proof}[Proof of Theorem \ref{thm:blowup1}]
		By $u_1=v_{1,1}+v_{1,2}$ and Propositions \ref{prop-blowup11} and \ref{prop-bounded12}, we complete the proof of Theorem \ref{thm:blowup1}.
	\end{proof}
	
	{
		\subsubsection{Auxiliary lemmas}}
	
	Let us introduce an auxiliary function $p\in C^{2,\alpha}(\mathbb{R}^2)$, 
	\begin{equation}\label{def-p}
		p(\bx)=\frac{x_{2}+\frac{\varepsilon}{2}-\mathcal{H}_{2}(x_1)}{\delta(x_1)}\quad\hbox{in}\ \Omega_{2R_0},
	\end{equation}
	such that $p=1$ on $\partial{D}_{1}$, $p=0$ on $\partial{D}_{2}\cup\partial B_R$, and $\|p\|_{C^{2,\alpha}(\Omega\setminus\Omega_{R_0})}\leq\,C$, where $\Omega_{R_0}$ and $\delta(x_1)$ are defined in \eqref{narrowreg} and \eqref{delta_x'}, respectively. Now we use $p$ to construct an auxiliary function $\bar v_{1,1}$ such that $\bar v_{1,1}=1+g(\bx)$ on $\partial{D}_{1}$, $\bar v_{1,1}=1+h(\bx)$ on $\partial{D}_{2}$, $\bar v_{1,1}=0$ on $\partial B_R$,  
	\begin{equation*}
		\bar v_{1,1}=pg(x_1,\varepsilon/2+\mathcal{H}_{1}(x_1))+(1-p)h(x_1,-\varepsilon/2+\mathcal{H}_{2}(x_1))+1\quad\hbox{in}\ \Omega_{2R_0},
	\end{equation*}
	and $\|\bar v_{1,1}\|_{C^{2}(\Omega\setminus\Omega_{R_0})}\leq\,C(\|g\|_{C^2(\partial D_1)}+\|h\|_{C^2(\partial D_2)})$. By straightforward computations, we have, in $\Omega_{2R_0}$,
	\begin{align}\label{est-Dv11}
		|\partial_{x_1}\bar v_{1,1}(\bx)|&\leq \frac{C|x_1|}{\delta(x_1)}|g(x_1,\varepsilon/2+\mathcal{H}_{1}(x_1))-h(x_1,-\varepsilon/2+\mathcal{H}_{2}(x_1))|\nonumber\\
		&\quad+C\|\nabla g\|_{L^\infty(\partial D_1)}+C\|\nabla h\|_{L^\infty(\partial D_2)},
	\end{align}
	\begin{align}\label{est-Dv11-x2}
		|\partial_{x_2}\bar v_{1,1}(\bx)|&\leq \frac{C}{\delta(x_1)}|g(x_1,\varepsilon/2+\mathcal{H}_{1}(x_1))-h(x_1,-\varepsilon/2+\mathcal{H}_{2}(x_1))|\nonumber\\
		&\quad+C\|\nabla g\|_{L^\infty(\partial D_1)}+C\|\nabla h\|_{L^\infty(\partial D_2)},
	\end{align}
	and 
	\begin{align}\label{est-D2v11}
		|\nabla^2\bar v_{1,1}(\bx)|&\leq C\left(\frac{|x_1|}{\delta^2(x_1)}+\frac{1}{\delta(x_1)}\right)|g(x_1,\varepsilon/2+\mathcal{H}_{1}(x_1))-h(x_1,-\varepsilon/2+\mathcal{H}_{2}(x_1))|\nonumber\\
		&\quad+\frac{C}{\delta(x_1)}\left(\|\nabla g\|_{L^\infty(\partial D_1)}+\|\nabla h\|_{L^\infty(\partial D_2)}\right)\nonumber\\
		&\quad+C\left(\|\nabla^2g\|_{L^\infty(\partial D_1)}+\|\nabla^2h\|_{L^\infty(\partial D_2)}\right).
	\end{align}
	Next we shall investigate the gradient estimate of $v_{1,1}-\bar v_{1,1}$. For simplicity, we denote 
	\begin{equation*}
		v:=v_{1,1},\quad \bar v:=\bar v_{1,1},\quad\mbox{and}\quad w:=v-\bar v.
	\end{equation*}
	Then for small $k>0$, $w$ satisfies 
	\begin{equation}\label{eq:blowup-w11}
		\begin{cases}
			\Delta w+k^2w=-\Delta \bar v-k^2\bar v\quad &\text{in} \quad \Omega,\\
			w=0\quad &\text{on} \quad \partial D_1\cup\partial D_2,\\
			w=0\quad &\text{on} \quad \partial B_R.
		\end{cases}
	\end{equation}
	
	\begin{lem}\label{lem-global}
		Let $w$ be the solution of \eqref{eq:blowup-w11}. Then we have 
		\begin{align*}
			\|\nabla w\|_{L^2(\Omega)}&\leq C\Big(k\big(\|v\|_{L^2(\Omega)}+1+\|g\|_{L^\infty(\partial D_1)}+\|h\|_{L^\infty(\partial D_2)}\big)+\|g\|_{C^2(\partial D_1)}\\
			&\quad+\|h\|_{C^2(\partial D_2)}+1\Big),
		\end{align*}
		where  $C>0$ is a constant independently of $\varepsilon$.
	\end{lem}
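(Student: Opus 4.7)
The plan is to carry out a standard energy estimate for the Dirichlet Helmholtz problem \eqref{eq:blowup-w11}. Testing the equation against $w$ and integrating over $\Omega$, using $w|_{\partial\Omega}=0$ together with Green's identity, produces
\begin{equation*}
\int_{\Omega}|\nabla w|^{2}\,d\bx \;=\; k^{2}\int_{\Omega}w^{2}\,d\bx \;-\; \int_{\Omega}\nabla\bar v\cdot\nabla w\,d\bx \;+\; k^{2}\int_{\Omega}\bar v\,w\,d\bx.
\end{equation*}
For sufficiently small $k$, Poincar\'e's inequality on $\Omega$ makes $\int|\nabla w|^{2}-k^{2}\int w^{2}$ coercive in $\|\nabla w\|_{L^{2}}$, so the task reduces to controlling the cross term $\int\nabla\bar v\cdot\nabla w$ and the zero-order term $k^{2}\int\bar v w$.

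The apparent obstacle, and the main thing to resolve, is that by \eqref{est-Dv11-x2} the component $|\partial_{x_{2}}\bar v|\sim|g-h|/\delta(x_{1})$ is not bounded in $L^{2}(\Omega)$ uniformly in $\varepsilon$: a naive Cauchy--Schwarz applied to $\int\partial_{x_{2}}\bar v\,\partial_{x_{2}}w$ would cost a factor like $\varepsilon^{-1/4}$. The saving observation, which is the heart of the proof, is that in $\Omega_{2R_{0}}$ the definition $\bar v=p\,g+(1-p)\,h+1$ gives $\partial_{x_{2}}\bar v=(g-h)/\delta(x_{1})$, which is \emph{independent of $x_{2}$}. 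Combining this with the Dirichlet condition $w=0$ on $\partial D_{1}\cup\partial D_{2}$ yields the exact cancellation
\begin{equation*}
\int_{\Omega_{2R_{0}}}\partial_{x_{2}}\bar v\,\partial_{x_{2}}w\,d\bx = \int_{|x_{1}|<2R_{0}}\frac{g-h}{\delta(x_{1})}\Bigl[w\bigl(x_{1},\tfrac{\varepsilon}{2}+\mathcal{H}_{1}\bigr)-w\bigl(x_{1},-\tfrac{\varepsilon}{2}+\mathcal{H}_{2}\bigr)\Bigr] dx_{1}=0,
\end{equation*}
removing the singular contribution from the cross term entirely.

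What remains is bookkeeping. For $\partial_{x_{1}}\bar v$ in the narrow region, the estimate \eqref{est-Dv11} together with $x_{1}^{2}\leq C\delta(x_{1})$ from \eqref{delta_x'} yields the bound $\|\partial_{x_{1}}\bar v\|_{L^{2}(\Omega_{2R_{0}})}\leq C(\|g\|_{L^{\infty}(\partial D_{1})}+\|h\|_{L^{\infty}(\partial D_{2})}+\|\nabla g\|_{L^{\infty}}+\|\nabla h\|_{L^{\infty}}+1)$, while on $\Omega\setminus\Omega_{R_{0}}$ the $C^{2}$ bound on $\bar v$ gives $\|\nabla\bar v\|_{L^{2}(\Omega\setminus\Omega_{R_{0}})}\leq C(\|g\|_{C^{2}(\partial D_{1})}+\|h\|_{C^{2}(\partial D_{2})}+1)$, and the $L^{\infty}$ bound $\|\bar v\|_{L^{\infty}(\Omega)}\leq C(1+\|g\|_{L^{\infty}}+\|h\|_{L^{\infty}})$ yields the same for $\|\bar v\|_{L^{2}(\Omega)}$. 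Plugging these into Cauchy--Schwarz applied to $\int\nabla\bar v\cdot\nabla w$, absorbing a quarter of $\|\nabla w\|_{L^{2}}^{2}$ to the left via Young's inequality, treating $k^{2}\int w^{2}$ and $k^{2}\int\bar v\,w$ through $\|w\|_{L^{2}}\leq\|v\|_{L^{2}}+\|\bar v\|_{L^{2}}$, and taking a square root then produces exactly the claimed inequality. The only subtle piece is that the cancellation in the displayed equation above holds only on $\Omega_{2R_{0}}$; the residual $\int_{\Omega\setminus\Omega_{2R_{0}}}\partial_{x_{2}}\bar v\,\partial_{x_{2}}w$ is controlled by the $C^{2}$ bound of $\bar v$ on that region, which is precisely why $C^{2}$-norms of $g$ and $h$ appear in the final estimate.
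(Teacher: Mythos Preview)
Your argument is correct and rests on the same structural observation as the paper---that the singular vertical part of $\bar v$ in the neck contributes nothing to the energy identity---but you organize the integration by parts differently. The paper keeps the term $\int_\Omega w\,\Delta\bar v$, uses $\partial_{x_2}^2\bar v=0$ in the neck so that $\Delta\bar v=\partial_{x_1}^2\bar v$ there, and then integrates by parts in $x_1$ only on $\Omega_{r_0}$; this creates a lateral boundary term at $|x_1|=r_0$, which is controlled by a mean-value-theorem slice selection. You instead integrate by parts once over all of $\Omega$ to reach $-\int_\Omega\nabla\bar v\cdot\nabla w$ (no boundary term, since $w|_{\partial\Omega}=0$), and then kill $\int_{\Omega_{2R_0}}\partial_{x_2}\bar v\,\partial_{x_2}w$ by the direct computation you display. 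Your route avoids the slice-selection trick entirely, which is a genuine simplification; the paper's route has the minor advantage of never writing down the singular $\partial_{x_2}\bar v$ explicitly. In both versions the remaining work is the same $L^2$ bound on $\partial_{x_1}\bar v$ in the neck via $|x_1|^2\le C\delta(x_1)$.

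One small remark: your appeal to Poincar\'e to make $\int|\nabla w|^2-k^2\int w^2$ coercive is unnecessary, since you already (correctly) bound $k^2\int w^2$ through $\|w\|_{L^2}\le\|v\|_{L^2}+\|\bar v\|_{L^2}$; the paper uses only the latter, and that is what produces the $k\|v\|_{L^2(\Omega)}$ term in the stated estimate.
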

	
	\begin{proof}
		Multiplying the equation in \eqref{eq:blowup-w11} by $w$ and integrating by parts, we have 
		\begin{align}\label{identity-Dw1}
			\int_{\Omega}|\nabla w|^2=\int_{\Omega}w(\Delta \bar v+k^2\bar v)+k^2\int_{\Omega}w^2.
		\end{align}
		It follows from the mean value theorem that, there exists $r_0\in(R_0/2,2R_0/3)$ such that 
		\begin{align*}
			\int_{\substack{|x_1|=r_0\\
					-\frac{\varepsilon}{2}+\mathcal{H}_{2}(x_1)<x_2<\frac{\varepsilon}{2}+\mathcal{H}_{1}(x_1)}}|w|\ dx_2&=\frac{6}{R_0}\int_{\substack{R_0/2<x_1<2R_0/3\\
					-\frac{\varepsilon}{2}+\mathcal{H}_{2}(x_1)<x_2<\frac{\varepsilon}{2}+\mathcal{H}_{1}(x_1)}}|w|\ d\bx\\
			&\leq CR_0^{5/2}\left(\int_{\Omega}|\nabla w|^2\right)^{1/2},
		\end{align*}
		where we used H\"{o}lder's inequality and the Poincar\'{e} inequality in the second inequality. Then combining H\"{o}lder's inequality and \eqref{est-Dv11}, we obtain
		\begin{align}\label{est-wbar-v}
			\int_{\Omega_{r_0}}w\Delta \bar v&=\int_{\Omega_{r_0}}w\partial_{x_1}^2\bar v\nonumber\\
			&=-\int_{\Omega_{r_0}}\partial_{x_1}w\partial_{x_1}\bar v+\int_{\substack{|x_1|=r_0\nonumber\\
					-\frac{\varepsilon}{2}+\mathcal{H}_{2}(x_1)<x_2<\frac{\varepsilon}{2}+\mathcal{H}_{1}(x_1)}}w\partial_{x_1}\bar v\frac{x_1}{r_0}\ ds\nonumber\\
			&\leq\left(\int_{\Omega_{r_0}}|\partial_{x_1}w|^2\right)^{1/2}\left(\int_{\Omega_{r_0}}|\partial_{x_1}\bar v|^2\right)^{1/2}+C\left(\int_{\Omega}|\nabla w|^2\right)^{1/2}\nonumber\\
			&\leq C(\|g\|_{C^1(\partial D_1)}+\|h\|_{C^1(\partial D_2)}+1)\left(\int_{\Omega}|\nabla w|^2\right)^{1/2}.
		\end{align}
		Substituting \eqref{est-wbar-v} into \eqref{identity-Dw1} and using $w=v-\bar v$, we have 
		\begin{align*}
			\int_{\Omega}|\nabla w|^2&\leq\int_{\Omega}w(\Delta \bar v+k^2\bar v)+k^2\int_{\Omega}w^2\\
			&\leq\int_{\Omega_{r_0}}w\Delta \bar v+\int_{\Omega\setminus\Omega_{r_0}}w\Delta \bar v+Ck^2\int_{\Omega}v^2+Ck^2(1+\|g\|_{L^\infty(\partial D_1)}^2+\|h\|_{L^\infty(\partial D_2)}^2)\\
			&\leq C(\|g\|_{C^1(\partial D_1)}+\|h\|_{C^1(\partial D_2)}+1)\left(\int_{\Omega}|\nabla w|^2\right)^{1/2}\\
			&\quad+C(\|g\|_{C^2(\partial D_1)}+\|h\|_{C^2(\partial D_2)}+1)\int_{\Omega\setminus\Omega_{r_0}}|w|+Ck^2\int_{\Omega}v^2\\
			&\quad+Ck^2(1+\|g\|_{L^\infty(\partial D_1)}^2+\|h\|_{L^\infty(\partial D_2)}^2)\\
			&\leq C(\|g\|_{C^2(\partial D_1)}+\|h\|_{C^2(\partial D_2)}+1)\left(\int_{\Omega}|\nabla w|^2\right)^{1/2}+Ck^2\int_{\Omega}v^2\\
			&\quad+Ck^2(1+\|g\|_{L^\infty(\partial D_1)}^2+\|h\|_{L^\infty(\partial D_2)}^2),
		\end{align*}
		where we used the Poincar\'{e} inequality in the last inequality. This yields the desired result.
	\end{proof}
	
	For any $\bz=(z_1,z_2)\in\Omega_{R_0}$, we set
	\begin{equation*}
		\Omega_{s}(z_1):=\left\{\bx\in\mathbb R^2: -\frac{\varepsilon}{2}+\mathcal{H}_{2}(x_1)<x_2<\frac{\varepsilon}{2}+\mathcal{H}_{1}(x_1),~|x_1-z_1|<s\right\}.
	\end{equation*}
	
	\begin{lem}\label{lem-localbdd}
		Let $w$ be the solution of \eqref{eq:blowup-w11}. Then for $\bz=(z_1,z_2)\in\Omega_{R_0}$ and sufficiently small $\varepsilon>0$, we have 
		\begin{align*}
			\|\nabla w\|_{L^2(\Omega_{\delta(z_1)}(z_1))}&\leq C\sqrt{\delta(z_1)}\Big[|g(z_1,\varepsilon/2+\mathcal{H}_{1}(z_1))-h(z_1,-\varepsilon/2+\mathcal{H}_{2}(z_1))|\nonumber\\
			&\quad+\sqrt{\delta(z_1)}\big(\|g\|_{C^2(\partial D_1)}+\|h\|_{C^2(\partial D_2)}+k\|v\|_{L^2(\Omega)}+1\big)\Big],
		\end{align*}
		where $C>0$ is a constant independently of $\varepsilon$.
	\end{lem}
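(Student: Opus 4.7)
The plan is to carry out a localized energy estimate for $w$ on the narrow window $\Omega_{2\delta(z_1)}(z_1)$, leveraging two structural features. Since $w$ vanishes on the top and bottom arcs $\partial D_1 \cup \partial D_2$, the Poincaré inequality in the thin direction gives
\[
\|w\|_{L^2(\Omega_s(z_1))} \leq C\,\delta(z_1)\,\|\partial_{x_2}w\|_{L^2(\Omega_s(z_1))},\qquad s \leq 2\delta(z_1),
\]
with the sharp constant $\delta(z_1)$. Moreover, by the construction in \eqref{def-p}, the auxiliary $\bar v$ is affine in $x_2$ for each fixed $x_1$, so $\Delta\bar v = \partial_{x_1}^2\bar v$ and I never need to invoke the strongly singular bound \eqref{est-D2v11} on $\nabla^2\bar v$.

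I would pick a cutoff $\eta(x_1) \in C_c^\infty(\mathbb{R})$ with $\eta \equiv 1$ on $|x_1-z_1|\leq\delta(z_1)$, supported in $|x_1-z_1|\leq 2\delta(z_1)$, and $|\eta'|\leq C/\delta(z_1)$; on $\operatorname{supp}\eta$, the vertical width $\delta(x_1)$ is comparable to $\delta(z_1)$. Multiplying the equation for $w$ in \eqref{eq:blowup-w11} by $\eta^2 w$ and integrating by parts over $\Omega_{2\delta(z_1)}(z_1)$ — all boundary terms vanish because $w=0$ on the top/bottom arcs and $\eta=0$ on the lateral sides — then shifting one derivative in $\int \eta^2 w\,\partial_{x_1}^2\bar v$, gives
\[
\int \eta^2 |\nabla w|^2 = -2\!\int\! \eta\eta' w\,\partial_{x_1}w + k^2\!\int\! \eta^2 w^2 + \int\! \partial_{x_1}(\eta^2 w)\,\partial_{x_1}\bar v - k^2\!\int\! \eta^2 w\,\bar v.
\]
Cauchy's inequality with a small parameter handles the gradient-gradient pairings, and the Poincaré estimate above together with the smallness of $k\delta(z_1)$ controls the $w^2$-weighted pieces, leaving only $\int_{\Omega_{2\delta(z_1)}(z_1)} |\partial_{x_1}\bar v|^2$ as genuine forcing.

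For this forcing, I would use the pointwise bound \eqref{est-Dv11}, splitting into the regimes $|z_1|\lesssim\sqrt{\varepsilon}$ (where $\delta(z_1)\sim\varepsilon$ and $|x_1|\lesssim\sqrt{\varepsilon}$) and $|z_1|\gtrsim\sqrt{\varepsilon}$ (where $\delta(z_1)\sim\lambda z_1^2$ and $|x_1|\sim|z_1|$). A short calculation in each regime, together with the $C^1$ continuity of $g,h$ used to replace $x_1$ by $z_1$ in their arguments up to a $\delta(z_1) B$ error, yields
\[
\int_{\Omega_{2\delta(z_1)}(z_1)} |\partial_{x_1}\bar v|^2 \leq C\,\delta(z_1)\,A^2 + C\,\delta(z_1)^2\,B^2,
\]
where $A := |g(z_1,\tfrac{\varepsilon}{2}+\mathcal{H}_1(z_1)) - h(z_1,-\tfrac{\varepsilon}{2}+\mathcal{H}_2(z_1))|$ and $B := \|g\|_{C^2(\partial D_1)}+\|h\|_{C^2(\partial D_2)}+k\|v\|_{L^2(\Omega)}+1$. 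Restricting the left-hand side to $\Omega_{\delta(z_1)}(z_1)$, where $\eta \equiv 1$, and extracting a square root produces the stated inequality.

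The main obstacle is the cutoff cross-term $\int(\eta')^2 w^2$: using $|\eta'|\leq C/\delta(z_1)$ together with Poincaré yields $C\|\partial_{x_2}w\|_{L^2(\Omega_{2\delta(z_1)}(z_1))}^2$, which is comparable in size to the left-hand side and cannot be absorbed directly. The remedy is a nested-shell iteration: prove the analogous identity on pairs $\Omega_s(z_1)\subset \Omega_{s+\sigma}(z_1)$ with $\sigma$ growing geometrically, so each step contracts the outer energy by a fixed factor $<1$. Iterating $\Ocal(R_0/\delta(z_1))$ times brings one to the global scale where $\|\nabla w\|_{L^2(\Omega)}$ is controlled by Lemma \ref{lem-global}, and the accumulated forcing telescopes into exactly $C\delta(z_1)A^2 + C\delta(z_1)^2 B^2$ because the area of each shell is proportional to its thickness. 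A secondary delicate point is the two-regime bookkeeping for $\|\partial_{x_1}\bar v\|_{L^2}$, which must be done uniformly so that both $|z_1|\lesssim\sqrt{\varepsilon}$ and $|z_1|\gtrsim\sqrt{\varepsilon}$ produce exactly the same $\sqrt{\delta(z_1)}$ gain on $A$ and $\delta(z_1)$ gain on $B$.
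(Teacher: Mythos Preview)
Your approach is essentially the paper's: a localized Caccioppoli-type inequality on $\Omega_s(z_1)$ with a horizontal cutoff, Poincar\'e in the $x_2$-direction to trade $w$ for $\nabla w$, and iteration out to a scale where Lemma~\ref{lem-global} takes over. The one variation---shifting a derivative so the forcing appears as $\int|\partial_{x_1}\bar v|^2$ rather than the paper's $\int|\Delta\bar v|^2$ estimated via \eqref{est-D2v11}---is a mild simplification, and is in fact the device the paper itself uses in the proof of Lemma~\ref{lem-global}.

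One point needs tightening. The Poincar\'e constant on $\Omega_s(z_1)$ is $\sup_{|x_1-z_1|<s}\delta(x_1)$, which is comparable to $\delta(z_1)$ only while $s\lesssim\sqrt{\delta(z_1)}$; for larger $s$ it grows like $s^2$, and the contraction factor $\tfrac14+c_0(\sup\delta)^2/(s-t)^2$ is no longer $\leq\tfrac12$ with step size $\sim\delta(z_1)$. The paper accordingly takes \emph{constant} steps $t_{i+1}-t_i\sim\delta(z_1)$ and stops after $m_0\sim 1/\sqrt{\delta(z_1)}$ iterations at scale $t_{m_0}\sim\sqrt{\delta(z_1)}$, relying on the factor $(1/2)^{m_0}$ to annihilate the global energy $\mathcal F(t_{m_0})\leq\|\nabla w\|_{L^2(\Omega)}^2$. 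Your description---``$\sigma$ growing geometrically'' together with ``$\Ocal(R_0/\delta(z_1))$ steps''---is internally inconsistent, and iterating all the way to $R_0$ with steps $\sim\delta(z_1)$ would lose the contraction once $s\gg\sqrt{\delta(z_1)}$. This is a bookkeeping issue rather than a conceptual gap; with the paper's step schedule your argument goes through.
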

	
	\begin{proof}
		For $0<t<s\leq R_0$, let $\eta$ be a smooth function satisfying $\eta(x_1)=1$ if $|x_1-z_1|<t$, $\eta(x_1)=0$ if $|x_1-z_1|>s$, $0\leqslant\eta(x_1)\leqslant1$ if $t\leqslant|x_1-z_1|<s$,
		and
		$|\eta'(x_1)\leq \frac{2}{s-t}$. Multiplying the equation in \eqref{eq:blowup-w11} by $\eta^{2}w$, integrating by parts, and using Young's inequality and $|\bar v|\leq C(1+\|g\|_{L^\infty(\partial D_1)}+\|h\|_{L^\infty(\partial D_2)})$, we have
		\begin{align}\label{local-Dw1}
			&\int_{\Omega_{s}(z_1)}\eta^2|\nabla w|^2=\int_{\Omega_{s}(z_1)}\eta^2w(k^2w+\Delta\bar v+k^2\bar v)-2\int_{\Omega_{s}(z_1)}\eta\nabla w\cdot w\nabla\eta\nonumber\\
			&\leq Ck^2\int_{\Omega_{s}(z_1)}|w|^2+\frac{C}{(s-t)^2}\int_{\Omega_{s}(z_1)}|w|^2+C(s-t)^2\int_{\Omega_{s}(z_1)}|\Delta\bar v|^2\nonumber\\
			&\quad+\frac{Ck^2}{(s-t)^2}\int_{\Omega_{s}(z_1)}|w|^2+Ck^2(s-t)^2|\Omega_{s}(z_1)|(1+\|g\|_{L^\infty(\partial D_1)}^2+\|h\|_{L^\infty(\partial D_2)}^2)\nonumber\\
			&\quad+\frac{1}{2}\int_{\Omega_{s}(z_1)}\eta^2|\nabla w|^2.
		\end{align}
		Since $w=0$ on $\partial D_1\cup\partial D_2$, by the Poincar\'e inequality, we derive
		\begin{align*}
			\int_{\Omega_{s}(z_1)}|w|^2\leq
			C\delta^2(z_1)\int_{\Omega_{s}(z_1)}|\nabla w|^2.
		\end{align*}
		Then \eqref{local-Dw1} becomes
		\begin{align}\label{local-Dw1-int}
			&\int_{\Omega_{s}(z_1)}\eta^2|\nabla w|^2\nonumber\\
			&\leq \left(C_0\delta^2(z_1)k^2+\frac{c_0\delta^2(z_1)(k^2+1)}{(s-t)^2}\right)\int_{\Omega_{s}(z_1)}|\nabla w|^2+C(s-t)^2\int_{\Omega_{s}(z_1)}|\Delta\bar v|^2\nonumber\\
			&\quad+Ck^2(s-t)^2|\Omega_{s}(z_1)|(1+\|g\|_{L^\infty(\partial D_1)}^2+\|h\|_{L^\infty(\partial D_2)}^2),
		\end{align}
		where $C_0, c_{0}>0$ are constants independently of $\varepsilon$. By shrinking $\varepsilon$ and $R_0$,  we have  
		\begin{equation}\label{small-term}
			\delta(z_1)\leq 1\quad\mbox{and}\quad C_0\delta^2(z_1)k^2\leq \frac{1}{4}.
		\end{equation}
		Substituting \eqref{small-term} into \eqref{local-Dw1-int}, we obtain
		\begin{align}\label{ite-Dw1}
			\int_{\Omega_{s}(z_1)}\eta^2|\nabla w|^2&\leq \left(\frac{1}{4}+\frac{c_0\delta^2(z_1)(k^2+1)}{(s-t)^2}\right)\int_{\Omega_{s}(z_1)}|\nabla w|^2+C(s-t)^2\int_{\Omega_{s}(z_1)}|\Delta\bar v|^2\nonumber\\
			&\quad+Ck^2(s-t)^2s(1+\|g\|_{L^\infty(\partial D_1)}^2+\|h\|_{L^\infty(\partial D_2)}^2).
		\end{align}
		By using \eqref{est-D2v11} and the mean value theorem, we have 
		\begin{align*}
			\int_{\Omega_{s}(z_1)}|\Delta\bar v|^2
			&\leq C|g(z_1,\varepsilon/2+\mathcal{H}_{1}(z_1))-h(z_1,-\varepsilon/2+\mathcal{H}_{2}(z_1))|^2\int_{\Omega_{s}(z_1)}\left(\frac{|x_1|}{\delta^2(x_1)}+\frac{1}{\delta(x_1)}\right)^2\ d\bx\\
			&\quad+C\left(\|\nabla g\|_{L^\infty(\partial D_1)}^2+\|\nabla h\|_{L^\infty(\partial D_2)}^2\right)\int_{\Omega_{s}(z_1)}\left(\frac{|x_1|}{\delta^2(x_1)}+\frac{1}{\delta(x_1)}\right)^2|x_1-z_1|^2\ d\bx\\
			&\quad+C\left(\|\nabla g\|_{L^\infty(\partial D_1)}^2+\|\nabla h\|_{L^\infty(\partial D_2)}^2\right)\int_{\Omega_{s}(z_1)}\frac{1}{\delta^2(x_1)}\ dx\\
			&\quad+Cs\left(\|\nabla^2 g\|_{L^\infty(\partial D_1)}^2+\|\nabla^2 h\|_{L^\infty(\partial D_2)}^2\right)\\                                 
			&\leq Cs\Bigg[\frac{|g(z_1,\varepsilon/2+\mathcal{H}_{1}(z_1))-h(z_1,-\varepsilon/2+\mathcal{H}_{2}(z_1))|^2}{\delta^2(z_1)}\\
			&\qquad\qquad+\Big(\|\nabla g\|_{L^\infty(\partial D_1)}^2+\|\nabla h\|_{L^\infty(\partial D_2)}^2\Big)\Big(\frac{s^2}{\delta^2(z_1)}+\frac{1}{\delta(z_1)}\Big)\\
			&\qquad\qquad+\Big(\|\nabla^2 g\|_{L^\infty(\partial D_1)}^2+\|\nabla^2 h\|_{L^\infty(\partial D_2)}^2\Big)\Bigg].
		\end{align*}
		This together with \eqref{ite-Dw1} gives
		\begin{align}\label{iteration}
			\mathcal{F}(t)&\leq \left(\frac{1}{4}+\frac{c_0\delta^2(z_1)(k^2+1)}{(s-t)^2}\right)\mathcal{F}(s)\nonumber\\
			&\quad+C(s-t)^2s\Bigg[\frac{|g(z_1,\varepsilon/2+\mathcal{H}_{1}(z_1))-h(z_1,-\varepsilon/2+\mathcal{H}_{2}(z_1))|^2}{\delta^2(z_1)}\nonumber\\
			&\quad+\Big(\|\nabla g\|_{L^\infty(\partial D_1)}^2+\|\nabla h\|_{L^\infty(\partial D_2)}^2\Big)\Big(\frac{s^2}{\delta^2(z_1)}+\frac{1}{\delta(z_1)}\Big)\nonumber\\
			&\quad+\Big(\|g\|_{C^2(\partial D_1)}^2+\|h\|_{C^2(\partial D_2)}^2+k^2\Big)\Bigg],
		\end{align}
		where
		$$\mathcal{F}(t):=\int_{\Omega_{t}(z_1)}|\nabla w|^{2}.$$  Let $m_{0}=\lceil\frac{1}{4\sqrt{c_{0}(k^2+1)\delta(z_1)}}\rceil+1$ and $t_{i}=\delta(z_1)+2\sqrt{c_{0}(k^2+1)}i\delta(z_1), i=0,1,2,\dots,m_{0}$. Then we take $s=t_{i+1}$ and $t=t_{i}$ in \eqref{iteration} to get 
		\begin{align*}
			\mathcal{F}(t_{i})&\leq \frac{1}{2}\mathcal{F}(t_{i+1})+C(i+1)(k^2+1)\delta(z_1)\Big[|g(z_1,\varepsilon/2+\mathcal{H}_{1}(z_1))-h(z_1,-\varepsilon/2+\mathcal{H}_{2}(z_1))|^2\nonumber\\
			&\quad+\delta(z_1)\big(\|g\|_{C^2(\partial D_1)}^2+\|h\|_{C^2(\partial D_2)}^2+k^2\big)\Big].
		\end{align*}
		After $m_{0}$ iterations, and using Lemma \ref{lem-global}, for sufficiently small $\varepsilon$ and $|z_1|$, we obtain
		\begin{align*}
			\mathcal{F}(t_0)&\leq \left(\frac{1}{2}\right)^{m_0}\mathcal{F}(t_{m_0})
			+C(k^2+1)\delta(z_1)\Big[|g(z_1,\varepsilon/2+\mathcal{H}_{1}(z_1))-h(z_1,-\varepsilon/2+\mathcal{H}_{2}(z_1))|^2\nonumber\\
			&\quad+\delta(z_1)\big(\|g\|_{C^2(\partial D_1)}^2+\|h\|_{C^2(\partial D_2)}^2+k^2\big)\Big]\sum\limits_{l=0}^{m_0-1}\left(\frac{1}{2}\right)^{l}(l+1)\\
			&\leq C\delta(z_1)\Big[|g(z_1,\varepsilon/2+\mathcal{H}_{1}(z_1))-h(z_1,-\varepsilon/2+\mathcal{H}_{2}(z_1))|^2\nonumber\\
			&\quad+\delta(z_1)\big(\|g\|_{C^2(\partial D_1)}^2+\|h\|_{C^2(\partial D_2)}^2+k^2\|v\|_{L^2(\Omega)}^2+1\big)\Big].
		\end{align*}
		This finishes the proof.
	\end{proof}
	
	{\subsubsection{Proofs of Propositions \ref{prop-blowup11} and \ref{prop-bounded12}}\label{subsec-prop}}
	
	With Lemma \ref{lem-localbdd} in hand, we are ready to finish the proof of Propositions \ref{prop-blowup11} and \ref{prop-bounded12}.
	
	\begin{proof}[Proof of Proposition \ref{prop-blowup11}]
		For simplicity, we denote $\delta:=\delta(z_1)$. Taking the following change of variables
		\begin{equation*}
			\left\{
			\begin{aligned}
				&x_1-z_1=\delta y_1,\\
				&x_2=\delta y_{2}.
			\end{aligned}
			\right.
		\end{equation*}
		Then we transform $\Omega_{\delta}(z_1)$ into a nearly unit size domain 
		\begin{align*}
			Q_{1}:=\Big\{\by\in\mathbb{R}^{2}: -\frac{\varepsilon}{2\delta}-\frac{1}{\delta}\mathcal{H}_{2}(\delta\,y_1+z_1)<y_{2}
			<\frac{\varepsilon}{2\delta}+\frac{1}{\delta}\mathcal{H}_{1}(\delta\,y_1+z_1),~|y_1|<1\Big\}.
		\end{align*}
		Denote its top and bottom boundaries by $\Gamma_{1}^{+}$ and $\Gamma_{1}^{-}$, respectively. Let us define
		\begin{equation*}
			W(y_1, y_{2}):=w(\delta\,y_1+z_1,\delta\,y_{2}),\quad \bar V(y_1, y_{2}):=\bar v(\delta\,y'+z',\delta\,y_{2}),\quad \by\in{Q}_{1}.
		\end{equation*}
		Then it follows from \eqref{eq:blowup-w11} that $W$ satisfies 
		\begin{equation*}
			\begin{cases}
				\Delta W+k^2\delta^2W=-\Delta \bar V-k^2\delta^2\bar V\quad &\text{in} \quad \Omega,\\
				W=0\quad &\text{on} \quad \Gamma_{1}^{+}\cup\Gamma_{1}^{+}.
			\end{cases}
		\end{equation*}
		By employing the standard interior and boundary $L^p$-estimate (see, for instance, \cite[Theorem 9.37]{GT1998}), the Poincar\'{e} inequality, and the Sobolev embedding theorem, we obtain, for $p=3$,
		\begin{align*}
			\|\nabla W\|_{L^\infty(Q_{1/2})}&\leq C\|W\|_{W^{2,p}(Q_{1/2})}\\
			&\leq C\left(\|W\|_{L^{p}(Q_{1})}+k^2\delta^2\|\bar V\|_{L^{\infty}(Q_{1})}+\|\Delta \bar V\|_{L^{\infty}(Q_{1})}\right)\\
			&\leq C\left(\|\nabla W\|_{L^{2}(Q_{1})}+k^2\delta^2\|\bar V\|_{L^{\infty}(Q_{1})}+\|\Delta \bar V\|_{L^{\infty}(Q_{1})}\right).
		\end{align*}
		Rescaling back, we derive 
		\begin{align*}
			\|\nabla w\|_{L^\infty(\Omega_{\delta}(z_1))}
			&\leq C\delta^{-1}\left(\|\nabla w\|_{L^{2}(\Omega_{\delta}(z_1))}+k^2\delta^2\|\bar v\|_{L^{\infty}(\Omega_{\delta}(z_1))}+\delta^2\|\Delta \bar v\|_{L^{\infty}(\Omega_{\delta}(z_1))}\right).
		\end{align*}
		This in combination with Lemma \ref{lem-localbdd} and \eqref{est-D2v11} yields 
		\begin{align}\label{est-gradientw}
			\|\nabla w\|_{L^\infty(\Omega_{\delta(z_1)/2}(z_1))}&\leq \frac{C|g(z_1,\varepsilon/2+\mathcal{H}_{1}(z_1))-h(z_1,-\varepsilon/2+\mathcal{H}_{2}(z_1))|}{\sqrt{\delta(z_1)}}\nonumber\\
			&\quad+C\big(\|g\|_{C^2(\partial D_1)}+\|h\|_{C^2(\partial D_2)}+k\|v\|_{L^2(\Omega)}+1\big).
		\end{align}
		Thus, by using $v_{1,1}=w+\bar v_{1,1}$, \eqref{est-Dv11}, and \eqref{est-Dv11-x2}, we obtain, for $\bx\in\Omega_{2R_0}$,
		\begin{align*}
			|\nabla v_{1,1}(\bx)|&\leq \frac{C|g(x_1,\varepsilon/2+\mathcal{H}_{1}(x_1))-h(x_1,-\varepsilon/2+\mathcal{H}_{2}(x_1))|}{\varepsilon+\lambda|x_1|^2}\\
			&\quad+C\big(\|g\|_{C^2(\partial D_1)}+\|h\|_{C^2(\partial D_2)}+k\|v_{1.1}\|_{L^2(\Omega)}+1\big).
		\end{align*}
		Note that $\Omega\setminus\Omega_{2R_0}\subset\Omega\setminus\Omega_{R_0}$. By using the
		Sobolev embedding theorem and \cite[Theorem 9.37]{GT1998}, we have, for $p=3$,
		\begin{align}\label{est-out-Dv11}
			\|\nabla v_{1,1}\|_{L^\infty(\Omega\setminus\Omega_{2R_0})}&\leq C\|v_{1,1}\|_{W^{2,p}(\Omega\setminus\Omega_{2R_0})}\nonumber\\
			&\leq C\big(\|v_{1,1}\|_{L^{2}(\Omega\setminus\Omega_{R_0})}+\|g\|_{L^{\infty}(\partial D_1)}+\|h\|_{L^{\infty}(\partial D_2)}\big).
		\end{align}
		Therefore, Proposition \ref{prop-blowup11} is proved.
	\end{proof}
	
	\begin{proof}[Proof of Proposition \ref{prop-bounded12}]
		As in the proof of \eqref{est-out-Dv11}, for $p>2$, we have, 
		\begin{align*}
			\|\nabla v_{1,2}\|_{L^\infty(\Omega\setminus\Omega_{2R_0})}&\leq C\|v_{1,2}\|_{W^{2,p}(\Omega\setminus\Omega_{2R_0})}\nonumber\\
			&\leq C\big(\|v_{1,2}\|_{L^{2}(\Omega\setminus\Omega_{R_0})}+\|f_1\|_{L^{\infty}(\partial B_R)}\big).
		\end{align*}
		Thus, it suffices to consider the problem 
		\begin{equation*}
			\begin{cases}
				\Delta v_{1,2}+k^2v_{1,2}=0\quad &\text{in} \quad \Omega_{2R_0},\\
				v_{1,2}=0\quad &\text{on} \quad \Gamma_+^{2R_0}\cup\Gamma_-^{2R_0},
			\end{cases}
		\end{equation*}
		and estimate the gradient of $v_{1,2}$ in the narrow region $\Omega_{2R_0}$. Here, 
		\begin{equation}\label{def-gamma+R}
			\Gamma_+^{2R_0}:=\left\{(x_1,x_2)\in\mathbb R^2:  ~x_2=\frac{\varepsilon}{2}+\mathcal{H}_{1}(x_1),~|x_1|\leq 2R_0\right\}
		\end{equation}
		and 
		\begin{equation}\label{def-gamma-R}
			\Gamma_-^{2R_0}:=\left\{(x_1,x_2)\in\mathbb R^2:  ~x_2=-\frac{\varepsilon}{2}+\mathcal{H}_{2}(x_1),~|x_1|\leq 2R_0\right\}.
		\end{equation}
		To this end, by multiplying the equation in \eqref{eq:bounded112} by $v_{1,2}$ and integrating by parts, we have 
		\begin{align}\label{Dv12-omega}
			\int_{\Omega_{2R_0}}|\nabla v_{1,2}|^2&=k^2\int_{\Omega_{2R_0}}|v_{1,2}|^2+\int_{\substack{|x_1|=2R_0\\
					-\frac{\varepsilon}{2}+\mathcal{H}_{2}(x_1)<x_2<\frac{\varepsilon}{2}+\mathcal{H}_{1}(x_1)}}v_{1,2}\nabla v_{1,2}\frac{\bx}{r}\nonumber\\
			&\leq k^2\int_{\Omega_{2R_0}}|v_{1,2}|^2+\int_{\substack{|x_1|=2R_0\\
					-\frac{\varepsilon}{2}+\mathcal{H}_{2}(x_1)<x_2<\frac{\varepsilon}{2}+\mathcal{H}_{1}(x_1)}}(|v_{1,2}|^2+|\nabla v_{1,2}|^2).
		\end{align}
		Note that $\Omega_{3R_0}\setminus\Omega_{3R_0/2}\subset\Omega\setminus\Omega_{R_0}$. Similar to the proof of \eqref{est-out-Dv11}, we obtain 
		\begin{align*}
			\|\nabla v_{1,2}\|_{L^\infty(\Omega_{3R_0}\setminus\Omega_{3R_0/2})}&\leq C\|v_{1,2}\|_{W^{2,p}(\Omega_{3R_0}\setminus\Omega_{3R_0/2})}\nonumber\\
			&\leq C\big(\|v_{1,2}\|_{L^{2}(\Omega\setminus\Omega_{R_0})}+\|f_1\|_{L^{\infty}(\partial B_R)}\big).
		\end{align*}
		Recalling that $v_{1,2}=0$ on $\partial D_1$, we have, for any $\bx\in\Omega_{3R_0}\setminus\Omega_{3R_0/2}$,
		\begin{align*}
			|v_{1,2}(x_1,x_2)|&=|v_{1,2}(x_1,x_2)-v_{1,2}(x_1,\frac{\varepsilon}{2}+\mathcal{H}_{1}(x_1))|\\
			&\leq C(\varepsilon+\lambda|x_1|^2)\|\nabla v_{1,2}\|_{L^\infty(\Omega_{3R_0}\setminus\Omega_{3R_0/2})}\\
			&\leq C\big(\|v_{1,2}\|_{L^{2}(\Omega\setminus\Omega_{R_0})}+\|f_1\|_{L^{\infty}(\partial B_R)}\big).
		\end{align*}
		Hence, we derive 
		\begin{align*}
			\int_{\substack{|x_1|=2R_0\nonumber\\
					-\frac{\varepsilon}{2}+\mathcal{H}_{2}(x_1)<x_2<\frac{\varepsilon}{2}+\mathcal{H}_{1}(x_1)}}(|v_{1,2}|^2+|\nabla v_{1,2}|^2)\leq C\big(\|v_{1,2}\|_{L^{2}(\Omega\setminus\Omega_{R_0})}^2+\|f_1\|_{L^{\infty}(\partial B_R)}^2\big).
		\end{align*}
		Coming back to \eqref{Dv12-omega}, we have
		\begin{equation*}
			\|\nabla v_{1,2}\|_{L^2(\Omega_{2R_0})}\leq C\big((k+1)\|v_{1,2}\|_{L^{2}(\Omega)}+\|f_1\|_{L^{\infty}(\partial B_R)}\big).
		\end{equation*}
		By adapting a similar argument that led to Lemma \ref{lem-localbdd} with $\bar v=0$, we have 
		\begin{equation*}
			\|\nabla v_{1,2}\|_{L^2(\Omega_{\delta(z_1)}(z_1))}\leq C\big((k+1)\|v_{1,2}\|_{L^{2}(\Omega)}+\|f_1\|_{L^{\infty}(\partial B_R)}\big)\delta(z_1),\quad \bz=(z_1,z_2)\in\Omega_{2R_0}.
		\end{equation*}
		Therefore, by following the proof of Proposition \ref{prop-blowup11}, we have 
		\begin{equation*}
			\|\nabla v_{1,2}\|_{L^\infty(\Omega_{\delta(z_1)/2}(z_1))}\leq C\big((k+1)\|v_{1,2}\|_{L^{2}(\Omega)}+\|f_1\|_{L^{\infty}(\partial B_R)}\big),\quad \bz=(z_1,z_2)\in\Omega_{2R_0}.
		\end{equation*}
		Proposition \ref{prop-bounded12} is proved.
	\end{proof}

	\subsection{Proof of Theorem \ref{thm:blowup}}
	As in the proof of Theorem \ref{thm:blowup1}, we decompose the solution of \eqref{eq:blowup} as 
	$$u_2=v_{2,1}+v_{2,2},$$
	where $v_{2,1}$ and $v_{2,2}$ satisfy the following problem, respectively,
	\begin{equation}\label{eq:blowup21}
		\begin{cases}
			\Delta v_{2,1}+k^2v_{2,1}=0\quad &\text{in} \quad \Omega,\\
			v_{2,1}=1+o(1)\quad &\text{on} \quad \partial D_1,\\
			v_{2,1}=-1+o(1)\quad &\text{on} \quad \partial D_2,\\
			v_{2,1}=0\quad &\text{on} \quad \partial B_R,
		\end{cases}
	\end{equation}
	and 
	\begin{equation}\label{eq:bounded2}
		\begin{cases}
			\Delta v_{2,2}+k^2v_{2,2}=0\quad &\text{in} \quad \Omega,\\
			v_{2,2}=0\quad &\text{on} \quad \partial D_1,\\
			v_{2,2}=0\quad &\text{on} \quad \partial D_2,\\
			v_{2,2}=f_2\quad &\text{on} \quad \partial B_R.
		\end{cases}
	\end{equation}
	
	We adapt the argument in the proof of Proposition \ref{prop-blowup11} to prove the gradient estimates of the solution of \eqref{eq:blowup21}. To this end, we construct an auxiliary function $\bar v_{2,1}\in C^{2,\alpha}(\mathbb{R}^2)$, such that $\bar v_{2,1}=1+o(1)$ on $\partial{D}_{1}$, $\bar v_{2,1}=-1+o(1)$ on $\partial{D}_{2}$, $\bar v_{2,1}=0$ on $\partial B_R$,
	\begin{equation*}
		\bar v_{2,1}(\bx)=p(1+o(1))+(1-p)(-1+o(1))\quad\hbox{in}\ \Omega_{2R_0},
	\end{equation*}
	and $\|\bar v_{2,1}\|_{C^{2,\alpha}(D^e\setminus\Omega_{R_0})}\leq\,C$, where $p$ is defined in \eqref{def-p}. {Then by using a direct calculation, we have 
		\begin{equation}\label{est-barv21}
			|\partial_{x_1}\bar v_{2,1}|\leq \frac{C|x_1|}{\delta(x_1)}\quad\mbox{and}\quad \partial_{x_2}\bar v_{2,1}=\frac{2+o(1)}{\delta(x_1)}.
		\end{equation}
	} Denote 
	\begin{equation}\label{def-w21}
		w_{2,1}:=v_{2,1}-\bar v_{2,1}.
	\end{equation}
	Then $w_{2,1}$ satisfies \eqref{eq:blowup-w11}. Therefore, by mimicking the arguments of the proof of {\eqref{est-gradientw}, we derive 
		\begin{equation*}
			\|\nabla w_{2,1}\|_{L^\infty(\Omega_{\delta(x_1)/2}(x_1))}\leq \frac{C}{\sqrt{\delta(x_1)}}+C\big(k\|v_{2,1}\|_{L^2(\Omega)}+1\big).
		\end{equation*}
		This in combination with \eqref{est-barv21} and \eqref{def-w21} yields } the result in Proposition \ref{prop-blowup21} below. The details are omitted here.
	
	\begin{prop}\label{prop-blowup21}
		Let $v_{2,1}$ be the solution of \eqref{eq:blowup21}, then for sufficiently small $\varepsilon>0$, we have 
		\begin{equation*}
			|\nabla v_{2,1}(\bx)|\leq \frac{C}{\varepsilon+\lambda|x_1|^2}{+C\big(k\|v_{2.1}\|_{L^2(\Omega)}+1\big)},\quad \bx\in\Omega,
		\end{equation*}
		and 
		\begin{equation*}
			|\nabla v_{2,1}(0,x_2)|\geq \frac{1}{C\varepsilon},\quad{x_2\in(-\varepsilon/2,\varepsilon/2),}
		\end{equation*}
		where $C>0$ is a constant independently of $\varepsilon$ and $\lambda$ is the curvature of $\partial D$ at $(0,\varepsilon/2)$ and $(0,-\varepsilon/2)$.
	\end{prop}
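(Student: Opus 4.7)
The plan is to treat the upper and lower bounds separately using the decomposition $v_{2,1}=\bar v_{2,1}+w_{2,1}$ from \eqref{def-w21}, and to re-use the iteration machinery of Lemmas \ref{lem-global}--\ref{lem-localbdd} for the correction $w_{2,1}$ while reading off the blow-up profile from the explicit form of $\bar v_{2,1}$.

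For the upper bound, note first that $w_{2,1}$ solves the analogue of \eqref{eq:blowup-w11} with source $-\Delta\bar v_{2,1}-k^2\bar v_{2,1}$ and homogeneous boundary data on $\partial D_1\cup\partial D_2\cup\partial B_R$. Since $\bar v_{2,1}$ plays the role previously played by $\bar v_{1,1}$, the estimates \eqref{est-barv21} are the exact analogue of \eqref{est-Dv11}--\eqref{est-D2v11} with the factor $|g-h|$ replaced by the constant $2$ and with no $g,h$ corrections. Lemma \ref{lem-global} then gives $\|\nabla w_{2,1}\|_{L^2(\Omega)}\leq C(k\|v_{2,1}\|_{L^2(\Omega)}+1)$, and the local iteration in Lemma \ref{lem-localbdd} yields $\|\nabla w_{2,1}\|_{L^2(\Omega_{\delta(z_1)}(z_1))}\leq C\sqrt{\delta(z_1)}$, up to the same lower-order terms. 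Rescaling $\Omega_{\delta(z_1)}(z_1)$ to a unit domain and applying the $W^{2,p}$-estimate for the Helmholtz equation exactly as in the derivation of \eqref{est-gradientw} produces
\[
\|\nabla w_{2,1}\|_{L^\infty(\Omega_{\delta(z_1)/2}(z_1))}\leq \frac{C}{\sqrt{\delta(z_1)}}+C\bigl(k\|v_{2,1}\|_{L^2(\Omega)}+1\bigr).
\]
Combining with $|\nabla\bar v_{2,1}|\leq C/\delta(x_1)$ from \eqref{est-barv21} and $\delta(x_1)=\varepsilon+\lambda|x_1|^2+\Ocal(|x_1|^{2+\alpha})$ gives the claimed bound inside $\Omega_{R_0}$; away from the narrow region, the standard $W^{2,p}$-estimate used for \eqref{est-out-Dv11} closes the argument.

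For the lower bound, I would restrict to the segment $\{(0,x_2):x_2\in(-\varepsilon/2,\varepsilon/2)\}$, on which $\delta(0)=\varepsilon$. By the explicit construction of $\bar v_{2,1}$ in $\Omega_{2R_0}$ (namely $\bar v_{2,1}=p(1+o(1))+(1-p)(-1+o(1))$ with $p$ as in \eqref{def-p}), the second identity in \eqref{est-barv21} gives the exact leading behavior $\partial_{x_2}\bar v_{2,1}(0,x_2)=(2+o(1))/\varepsilon$. The upper bound just established for $w_{2,1}$ implies $|\nabla w_{2,1}(0,x_2)|\leq C/\sqrt{\varepsilon}+C(k\|v_{2,1}\|_{L^2(\Omega)}+1)$, which is of strictly lower order as $\varepsilon\to 0$. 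Using $v_{2,1}=\bar v_{2,1}+w_{2,1}$ and the reverse triangle inequality,
\[
|\partial_{x_2}v_{2,1}(0,x_2)|\geq \frac{2+o(1)}{\varepsilon}-\frac{C}{\sqrt{\varepsilon}}-C\geq \frac{1}{C\varepsilon}
\]
for $\varepsilon$ sufficiently small, which is the desired lower bound.

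The main obstacle is conceptual rather than technical: the whole argument only works because $\bar v_{2,1}$ carries a genuine $1/\varepsilon$ singularity, coming from the jump between boundary values $+1$ and $-1$ across a strip of width $\varepsilon$, whereas the harmonic-like correction $w_{2,1}$ with vanishing trace on both inclusions can at best blow up like $1/\sqrt{\varepsilon}$. Separating these two scales is precisely what gives the resonant blow-up and distinguishes this case from that of $v_{1,1}$, where the boundary data on the two inclusions coincide to leading order and no $1/\varepsilon$ mechanism is available. On the technical side, the only thing to verify carefully is that the constants in the iteration of Lemma \ref{lem-localbdd} remain uniform when one replaces the $|g-h|$-dependent terms by a constant, and that the $k\|v_{2,1}\|_{L^2(\Omega)}$ contribution stays genuinely subordinate to the $1/\sqrt{\delta(z_1)}$ term throughout.
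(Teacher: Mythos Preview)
Your proposal is correct and follows essentially the same approach as the paper: decompose $v_{2,1}=\bar v_{2,1}+w_{2,1}$, rerun the global/local energy iteration (Lemmas \ref{lem-global}--\ref{lem-localbdd}) and the rescaled $W^{2,p}$ argument to bound $\nabla w_{2,1}$ by $C/\sqrt{\delta(z_1)}$ plus lower-order terms, then read off both the upper bound and the lower bound from the explicit $\partial_{x_2}\bar v_{2,1}=(2+o(1))/\delta(x_1)$ via the triangle and reverse triangle inequalities. The paper omits the details for the lower bound, but your reverse-triangle argument at $x_1=0$ is exactly the intended one.
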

	
	Note that the problem \eqref{eq:bounded2} is similar to \eqref{eq:bounded112} with $f_1$ replaced by $f_2$. Thus, the proof of Proposition \ref{prop-bounded2} below is the same as that of Proposition \ref{prop-bounded12}.
	
	\begin{prop}\label{prop-bounded2}
		Let $v_{2,2}$ be the solution of \eqref{eq:bounded2}, then for sufficiently small $\varepsilon>0$, we have 
		\begin{equation*}
			|\nabla v_{2,2}(\bx)|\leq C\big((k+1)\|v_{2,2}\|_{L^{2}(\Omega)}+\|f_2\|_{L^{\infty}(\partial B_R)}\big),\quad \bx\in\Omega,
		\end{equation*}
		where $C>0$ is a constant independently of $\varepsilon$.
	\end{prop}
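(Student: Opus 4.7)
The plan is to mirror the proof of Proposition \ref{prop-bounded12} verbatim, since problem \eqref{eq:bounded2} differs from \eqref{eq:bounded112} only by the replacement of the boundary data $f_1$ with $f_2$ on $\partial B_R$, while the interior equation and the homogeneous Dirichlet data on $\partial D_1 \cup \partial D_2$ are identical. The proof then splits into a bounded-region estimate away from the narrow neck and a narrow-region estimate inside $\Omega_{2R_0}$.

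First I would handle the region $\Omega \setminus \Omega_{2R_0}$ using the interior/boundary $W^{2,p}$ estimate from \cite[Theorem 9.37]{GT1998} combined with the Sobolev embedding (for $p>2$), obtaining
\[
\|\nabla v_{2,2}\|_{L^\infty(\Omega \setminus \Omega_{2R_0})} \leq C\bigl(\|v_{2,2}\|_{L^2(\Omega \setminus \Omega_{R_0})} + \|f_2\|_{L^\infty(\partial B_R)}\bigr),
\]
exactly as in the derivation of the analogous bound for $v_{1,2}$. Next I would reduce the problem to estimating the gradient in the narrow region $\Omega_{2R_0}$, since the interior problem there is $\Delta v_{2,2} + k^2 v_{2,2} = 0$ with $v_{2,2} = 0$ on the top/bottom arcs $\Gamma_+^{2R_0} \cup \Gamma_-^{2R_0}$.

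For the narrow-region estimate, I would multiply the equation by $v_{2,2}$ and integrate by parts over $\Omega_{2R_0}$ to get an identity of the form
\[
\int_{\Omega_{2R_0}} |\nabla v_{2,2}|^2 = k^2 \int_{\Omega_{2R_0}} |v_{2,2}|^2 + \int_{\{|x_1|=2R_0\} \cap \Omega_{2R_0}} v_{2,2}\, \nabla v_{2,2} \cdot \tfrac{\bx}{r}.
\]
The two interior integrals are absorbed into $(k+1)^2 \|v_{2,2}\|_{L^2(\Omega)}^2$, and the boundary flux on $\{|x_1|=2R_0\}$ is controlled by the $L^\infty$ gradient bound already obtained on $\Omega_{3R_0}\setminus\Omega_{3R_0/2}$ (since that annular strip lies inside $\Omega \setminus \Omega_{R_0}$); combining with $v_{2,2} = 0$ on $\partial D_1$ and the fundamental theorem of calculus yields $|v_{2,2}| \leq C\delta(x_1)\|\nabla v_{2,2}\|_{L^\infty}$, which handles the flux term and produces
\[
\|\nabla v_{2,2}\|_{L^2(\Omega_{2R_0})} \leq C\bigl((k+1)\|v_{2,2}\|_{L^2(\Omega)} + \|f_2\|_{L^\infty(\partial B_R)}\bigr).
\]

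Finally, I would run the Caccioppoli-type iteration of Lemma \ref{lem-localbdd}, but with the auxiliary function taken to be $\bar v \equiv 0$, so all the $\bar v$-dependent error terms drop out. This gives
\[
\|\nabla v_{2,2}\|_{L^2(\Omega_{\delta(z_1)}(z_1))} \leq C\bigl((k+1)\|v_{2,2}\|_{L^2(\Omega)} + \|f_2\|_{L^\infty(\partial B_R)}\bigr)\, \delta(z_1),
\]
for every $\bz = (z_1, z_2) \in \Omega_{2R_0}$. Then the rescaling $x_1 - z_1 = \delta(z_1) y_1$, $x_2 = \delta(z_1) y_2$ transforms $\Omega_{\delta(z_1)}(z_1)$ into a domain of unit size, on which standard $W^{2,p}$-estimates and Sobolev embedding upgrade the $L^2$-gradient bound to an $L^\infty$-gradient bound, and rescaling back removes the factor $\delta(z_1)$ and yields
\[
\|\nabla v_{2,2}\|_{L^\infty(\Omega_{\delta(z_1)/2}(z_1))} \leq C\bigl((k+1)\|v_{2,2}\|_{L^2(\Omega)} + \|f_2\|_{L^\infty(\partial B_R)}\bigr),
\]
which, together with the outer estimate, gives the claim uniformly in $\varepsilon$. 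There is no substantive obstacle here, as the analysis is strictly parallel to that of Proposition \ref{prop-bounded12}; the only point requiring care is checking that none of the constants depending on the boundary trace $f_1$ in the earlier proof hide a dependence on $\varepsilon$, which they do not since the annular strip $\Omega_{3R_0}\setminus\Omega_{3R_0/2}$ used to control the flux is uniformly separated from the contact point.
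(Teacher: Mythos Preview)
Your proposal is correct and follows exactly the paper's approach: the paper simply notes that problem \eqref{eq:bounded2} is identical to \eqref{eq:bounded112} with $f_1$ replaced by $f_2$, and states that the proof of Proposition \ref{prop-bounded2} is the same as that of Proposition \ref{prop-bounded12}. You have spelled out those steps in detail, which matches the argument of Proposition \ref{prop-bounded12} verbatim with the appropriate substitution.
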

	
	\begin{proof}[Proof of Theorem \ref{thm:blowup}]
		By $u_2=v_{2,1}+v_{2,2}$ and Propositions \ref{prop-blowup21} and \ref{prop-bounded2}, Theorem \ref{thm:blowup} follows.
	\end{proof} 
	
	\appendix
	
	\section{Appendix}\label{Appendix}
	In this section, we analyze the estimate of $\alpha_{12}-\alpha_{11}$ defined in \eqref{eq:deaij} when the two resonators get closer to each other. It follows from \eqref{form-alphaij} and $\alpha_{ij}=\alpha_{ji}$ given in Lemma \ref{lem:ralij}  that 
	\begin{align*}
		\alpha_{12}-\alpha_{11}&=\int_{\partial D_1}\partial_{\bnu}(w_2-w_1)|_{+}\ ds\\
		&=\int_{\partial D_1\cap\mathcal{C}_{R_0}}\partial_{\bnu}(w_2-w_1)|_{+}\ ds+\int_{\partial D_1\setminus\mathcal{C}_{R_0}}\partial_{\bnu}(w_2-w_1)|_{+}\ ds,
	\end{align*}
	where $R_0$ is given in \eqref{h1h2}, and $\mathcal{C}_{R_0}$ is defined by
	\begin{equation*}
		\mathcal{C}_{R_0}:=\left\{(x_1,x_2)\in \mathbb{R}^{2}: -\frac{\varepsilon}{2}+2\min_{|x_1|=R_0}\mathcal{H}_{2}(x_1)<x_2<\frac{\varepsilon}{2}+2\max_{|x_1|=R}\mathcal{H}_{1}(x_1),~|x_1|<r\right\}.
	\end{equation*}
	Note that the components of the normal vector $\bnu$ on the portion $\partial D_1\cap\mathcal{C}_{R_0}$ are 
	\begin{equation*}
		\nu_1=\frac{\partial_{x_1}\mathcal{H}_{1}(x_1)}{\sqrt{1+|\partial_{x_1}\mathcal{H}_{1}(x_1)|^2}},\quad\nu_2=-\frac{1}{\sqrt{1+|\partial_{x_1}\mathcal{H}_{1}(x_1)|^2}}.
	\end{equation*}
	Then combining with the gradient estimates for elliptic equations, we have 
	\begin{equation}\label{alpha12-11}
		\alpha_{12}-\alpha_{11}=\int_{|x_1|\leq R_0}\big(\partial_{x_1}(w_2-w_1)\partial_{x_1}\mathcal{H}_{1}(x_1)-\partial_{x_2}(w_2-w_1)\big)\ dx_1+\Ocal(1).
	\end{equation}
	Thus, we need to estimate the gradient of $w_2-w_1$ in $\Omega_{R_0}$, where $\Omega_{R_0}$ is defined in \eqref{narrowreg}. To this end, we construct an auxiliary function $\bar w\in C^{2,\alpha}(\mathbb{R}^2)$, such that $\bar w=-1$ on $\partial{D}_{1}$, $\bar w=1$ on $\partial{D}_{2}$,
	\begin{equation}\label{def-barw}
		\bar w(\bx)=-2p(\bx)+1\quad\hbox{in}\ \Omega_{2R_0},
	\end{equation}
	and $\|\bar w\|_{C^{2,\alpha}(\Omega\setminus\Omega_{R_0})}\leq\,C$, where $p$  is defined in \eqref{def-p}. Then $w_2-w_1-\bar w$ satisfies 
	\begin{align*}
		\begin{cases}
			\Delta(w_2-w_1-\bar w)=-\Delta\bar w\quad &\text{in} \quad \Omega_{2R_0},\\
			w_2-w_1-\bar w=0\quad &\text{on} \quad \Gamma_+^{2R_0}\cup\Gamma_-^{2R_0},
		\end{cases}
	\end{align*}
	where $\Gamma_+^{2R_0}$ and $\Gamma_-^{2R_0}$ are defined in \eqref{def-gamma+R} and \eqref{def-gamma-R}, respectively. Similar to the proof of \eqref{est-gradientw}, we obtain 
	\begin{equation*}
		\|\nabla(w_2-w_1-\bar w)\|_{L^\infty(\Omega_{2R_0})}\leq \frac{C}{\sqrt{\delta(x_1)}},
	\end{equation*}
	where $\delta(x_1)$ is defined in \eqref{delta_x'}. This together with \eqref{def-barw} and \eqref{def-p}  yields 
	\begin{align}\label{est-D1w2-w1}
		|\partial_{x_1}(w_2-w_1)|\leq\frac{C}{\sqrt{\delta(x_1)}}
	\end{align}
	and 
	\begin{align}\label{est-D2w2-w1}
		\partial_{x_2}(w_2-w_1)=-\frac{2}{\delta(x_1)}+\frac{1}{\sqrt{\delta(x_1)}}\Ocal(1).
	\end{align}
	It follows from \eqref{delta_x'} that
	\begin{align}\label{est-delta-alpha}
		\left|\frac{1}{\delta(x_1)}-\frac{1}{\varepsilon+\lambda|x_1|^2}\right|\leq\frac{C|x_1|^\alpha}{\varepsilon+\lambda|x_1|^2}.
	\end{align}
	Note that
	\begin{align}\label{est-alpha}
		\int_{|x_1|\leq R_0}\frac{|x_1|^\alpha}{\varepsilon+\lambda|x_1|^2}dx_1
		&=2\int_0^{R_0}\frac{r^\alpha}{\varepsilon+\lambda r^2}\ dr\nonumber\\
		&=2\varepsilon^{\frac{\alpha-1}{2}}\lambda^{-\frac{\alpha+1}{2}}\int_{0}^{R_0\sqrt{\frac{\lambda}{\varepsilon}}}\frac{r^\alpha}{1+r^2}\ dr=\varepsilon^{\frac{\alpha-1}{2}}\Ocal(1).
	\end{align}
	Substituting \eqref{est-D1w2-w1} and \eqref{est-D2w2-w1} into \eqref{alpha12-11}, and using \eqref{est-delta-alpha} and \eqref{est-alpha}, we derive 
	\begin{align*}
		\alpha_{12}-\alpha_{11}&=-\int_{|x_1|\leq R_0}\partial_{x_2}(w_2-w_1)\ dx_1+\Ocal(1)\\
		&=2\int_{|x_1|\leq R_0}\frac{1}{\delta(x_1)}\ dx_1+\Ocal(1)\\
		&=2\int_{|x_1|\leq R_0}\frac{1}{\varepsilon+\lambda|x_1|^2}\ dx_1+\varepsilon^{\frac{\alpha-1}{2}}\Ocal(1)\\
		&=\frac{4}{\sqrt{\lambda\varepsilon}}\int_{0}^{R_0\sqrt{\frac{\lambda}{\varepsilon}}}\frac{1}{1+r^2}\ dr+\varepsilon^{\frac{\alpha-1}{2}}\Ocal(1)=\frac{2\pi}{\sqrt{\lambda\varepsilon}}+\varepsilon^{\frac{\alpha-1}{2}}\Ocal(1).
	\end{align*}

	\bibliographystyle{abbrv}
	\bibliography{Helmholtz_2D}{}
	
\end{document}